\newenvironment{Proof of}[1]{\emph{Proof of #1.}}{$\qquad \square$\par}
\DeclareMathOperator{\dashind}{-Ind}
\DeclareMathOperator{\Ind}{Ind}
\DeclareMathOperator{\Aut}{Aut}
\DeclareMathOperator{\Irr}{Irr}
\DeclareMathOperator{\clsp}{\overline{span}}
\newcommand{\K}{\mathcal K}
\newcommand{\RR}{\mathcal{R}}
\newcommand{\X}{\widehat{X}}
\newcommand{\Y}{\widehat Y}
\newcommand{\V}{\widetilde V}
\newcommand{\sal}{\widehat \alpha}
\newcommand{\LL}{\mathcal{L}}
\newcommand{\Qq}{\mathcal Q}
\newcommand{\al}{\alpha}
\newcommand{\FF}{\mathcal F}
\newcommand{\OO}{\mathcal O}
\newcommand{\NO}{\mathcal{NO}}
\newcommand{\B}{\mathcal B}
\newcommand{\SA}{\widehat{A}}
\newcommand{\SB}{\widehat{B}}
\newcommand{\G}{\mathcal G}
\newcommand{\C}{\mathbb C}
\newcommand{\Z}{\mathbb Z}
\newcommand{\N}{\mathbb N}
\newcommand{\T}{\mathbb T}
\newtheorem{thm}{Theorem}[section]\newtheorem{lem}[thm]{Lemma} 
\newtheorem{prop}[thm]{Proposition} 
\newtheorem{cor}[thm]{Corollary}
\theoremstyle{definition} 
\newtheorem{defn}[thm]{Definition}
\newtheorem{ex}[thm]{Example}
\newtheorem{rem}[thm]{Remark}
\title[Topological aperiodicity for product systems]{Topological aperiodicity for product systems over  
semigroups of Ore type}
\date{December 24, 2013}
\author{Bartosz Kosma  Kwa\'sniewski}
\address{ Institute of Mathematics, Polish Academy of Science,  ul. \'Sniadeckich 8, PL-00-956 Warszawa, Poland // Institute of Mathematics, University  of Bialystok,  ul. Akademicka 2, PL-15-267  Bialystok, Poland}
\email{bartoszk@math.uwb.edu.pl}  
\author{Wojciech Szyma\'nski}
\address{Department of Mathematics and Computer Science, The University of Southern Denmark, 
Campusvej 55, DK--5230 Odense M, Denmark}
\email{szymanski@imada.sdu.dk}
\keywords{$C^*$-correspondence, Hilbert bimodule, product system, Cuntz-Pimsner algebra, Ore semigroup, 
               topological freeness, Fell bundle,  topological graph, uniqueness theorem}
\subjclass[2010]{46L05}
\begin{document}
\begin{abstract}
We prove a version of uniqueness theorem for Cuntz-Pimsner algebras of discrete product systems over  semigroups of Ore type. To this end, we introduce  Doplicher-Roberts picture of Cuntz-Pimsner algebras, and the semigroup dual to a product system of 'regular' $C^*$-correspondences. Under a certain aperiodicity condition on the latter, we obtain  the uniqueness theorem and a   simplicity criterion for the algebras in question. These results generalize the corresponding ones for crossed products by discrete groups,   due to Archbold and Spielberg, and for Exel's crossed products, due 
to  Exel and Vershik. They also give interesting conditions for topological higher rank graphs and $P$-graphs, and apply to the new Cuntz $C^*$-algebra $\mathcal{Q}_\mathbb{N}$ arising from the "$ax+b$"-semigroup over $\mathbb{N}$.
\end{abstract}
\maketitle

\tableofcontents

\section{Introduction}

A fundamental problem in every theory dealing with  $C^*$-algebras generated by operators satisfying prescribed relations  is the uniqueness of such objects. More specifically, suppose $\RR$ is a set of $C^*$-algebraic relations on a set of generators $\G$, and suppose there is a mapping $\pi:\G\to \B(H)$  such that  $\{\pi(g)\}_{g\in \G}$ are non-zero bounded  operators on a Hilbert space $H$ which satisfy relations $\RR$. We call such $\pi$  faithful representation of $(\G,\RR)$, and 
we denote by $C^*(\pi)$ the $C^*$-algebra generated by $\{\pi(g)\}_{g\in \G}$.  The pair $(\G,\RR)$ has \emph{uniqueness property} if for any  two faithful representations $\pi_1$, $\pi_2$ of $(\G,\RR)$ the mapping
$$
\pi_1(g) \longmapsto \pi_2(g), \qquad g\in \G,
$$
extends to the (necessarily unique) isomorphism $C^*(\pi_1)\cong C^*(\pi_2)$. Results stating that a certain class of  relations possesses the above property are called \emph{uniqueness theorems}.  For reasonable pairs 
$(\G,\RR)$, see for instance  \cite{blackadar}, there exists a universal $C^*$-algebra $C^*(\G,\RR)$ for the above defined representations of $(\G,\RR)$. Clearly,  $(\G,\RR)$ has the uniqueness property if and only if $C^*(\G,\RR)$  exists and for any faithful representation $\pi$ of  $(\G,\RR)$ the natural epimorphism from $C^*(\G,\RR)$ onto $C^*(\pi)$ is actually an isomorphism. 

Among the oldest and best studied uniqueness theorems are those related to $C^*$-dynamical systems. Recall 
that such a system $(A, \alpha, G)$, consists of a $C^*$-algebra $A$ and a group action $\alpha:G\to \Aut(A)$. Uniqueness result in this context applies to the associated crossed product. Starting at least from the 
sixties, uniqueness theorems for crossed products began to appear in   connection with various problems such as properties of the Connes spectrum, proper outerness, ideal structure, or spectral analysis of functional-differential operators, see \cite[p. 225, 226]{AL}, \cite{Arch_Spiel} and \cite{kwa} for relevant surveys. One of the most popular conditions of this kind, known today as \emph{topological freeness}, was probably for the first time explicitly stated in \cite{OD} for $\Z$-actions. O'Donovan  proved in \cite{OD} that if the set of periodic points for the dual action $\widehat{\alpha}$ on the spectrum $\widehat{A}$ of $A$ has empty interior   then the crossed product $A\rtimes_\alpha \Z$ has intersection property, which is equivalent to the uniqueness property as defined above. This  result was generalized to the case of amenable discrete groups \cite{AL} and then to arbitrary discrete groups \cite{Arch_Spiel}. More specifically, by \cite[Theorem 1]{Arch_Spiel} topological freeness of $\widehat{\alpha}$ implies intersection property for $A\rtimes_\alpha G$, and this is equivalent to the uniqueness property if and only if action $\alpha$ is amenable in the sense that the full crossed product $A\rtimes_\alpha G$ and the reduced crossed product $A\rtimes_{\alpha,r} G$ are naturally isomorphic. This formulation is very convenient as it allows to investigate  amenability and topological freeness of $\alpha$ independently. Moreover, it can be used to study the structure of the reduced crossed product $A\rtimes_{\alpha,r} G$. 
We recall that for a separable  $A$ and $G=\Z$, or if $A$ is commutative and $G$ amenable discrete,  topological freeness of $\widehat{\alpha}$ is equivalent to the uniqueness property for $A\rtimes_\alpha G$, see 
\cite[Theorem 10.4]{OlPe} and \cite[Theorem 2]{Arch_Spiel}, respectively. However, it is known that 
already for $\Z^2$ actions topological freeness  is only sufficient but not necessary for the uniqueness property,  
\cite[Remark on page 123]{Arch_Spiel}. 

Another line of research leading towards numerous uniqueness theorems was  initiated by the seminal work of Cuntz and Krieger, \cite{CK1980}. In particular, \cite[Theorem 2.13]{CK1980} states that the Cuntz-Krieger relations possess the uniqueness property  if the underlying matrix $A$ satisfies condition (I).  Since then, similar results  concerning various generalizations of the algebra $\OO_A$ are usually called Cuntz-Krieger uniqueness theorems.   The diagram in Figure \ref{c-k theorems} presents  certain such theorems  relevant to the present paper; each item contains  the  name of  universal algebras, the condition which is (at present known to be) equivalent to uniqueness property for the corresponding defining relations, and the names of authors who introduced the condition. An arrow from $A$ to $B$ indicates that  algebras in question and the condition in $B$ can be viewed as generalizations of the ones in $A$. We provide more details and explanations in Section \ref{Applications and examples}.  

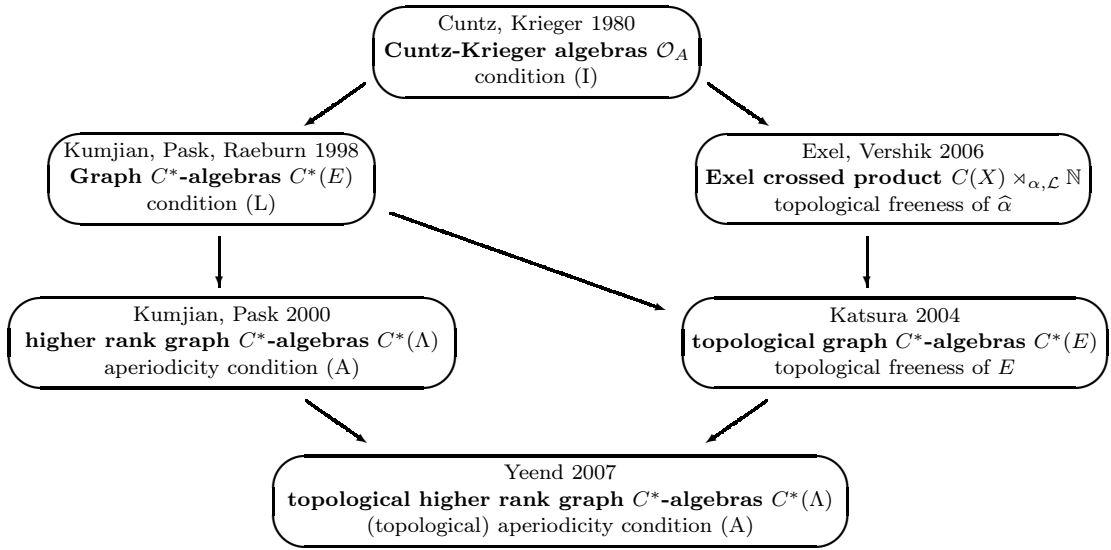
\begin{figure}[htb]
\begin{center} 
 \begin{picture}(350,195)(0,2)  
  \footnotesize
\cornersize{0.8}   \thinlines

\put(103,180){\Ovalbox{\begin{minipage}{4.1 cm}\begin{center}
Cuntz, Krieger 1980 \\
\textbf{Cuntz-Krieger algebras $\OO_A$} \\
condition (I)
\end{center}
\end{minipage}}} 
\linethickness{0.27mm}
\qbezier(100,170)(90,163)(80,156)
\put(80,156){\vector(-3,-2){3}}

\qbezier(230,170)(240,163)(250,156)
\put(250,156){\vector(3,-2){3}}
  
 \qbezier(45,112)(45,104)(45,96)
\put(45,96){\vector(0,-1){3}}
  
 \qbezier(290,112)(290,104)(290,96)
\put(290,96){\vector(0,-1){3}}

\qbezier(78,50)(88,43)(98,36)
\put(98,36){\vector(3,-2){3}}

\qbezier(232,36)(242,43)(252,50)
\put(232,36){\vector(-3,-2){3}}

\qbezier(110,121)(161,103)(212,85)
\put(212,85){\vector(3,-1){2}}

\put(-20,132){\Ovalbox{\begin{minipage}{4.1cm}\begin{center}
Kumjian, Pask, Raeburn 1998 \\
\textbf{Graph $C^*$-algebras $C^*(E)$}\\
condition (L)
\end{center}
\end{minipage}}} 

\put(225,132){\Ovalbox{\begin{minipage}{5cm}\begin{center}
Exel, Vershik 2006 \\
\textbf{Exel crossed product $C(X)\rtimes_{\alpha, \mathcal{L}}\N$} \\
topological freeness of $\widehat{\alpha}$
\end{center}
\end{minipage}}} 

\put(-35,70){\Ovalbox{\begin{minipage}{5.7 cm}\begin{center}
Kumjian, Pask 2000 \\
\textbf{higher rank graph $C^*$-algebras $C^*(\Lambda)$}\\
aperiodicity condition (A)
\end{center}
\end{minipage}}} 

\put(220,70){\Ovalbox{\begin{minipage}{5.4cm}\begin{center}
Katsura 2004 \\
\textbf{topological graph $C^*$-algebras $C^*(E)$} \\
topological freeness of $E$
\end{center}
\end{minipage}}} 

\put(65,10){\Ovalbox{\begin{minipage}{7.4 cm}\begin{center}
Yeend 2007 \\
\textbf{topological higher rank graph $C^*$-algebras $C^*(\Lambda)$} \\
(topological) aperiodicity condition (A)
\end{center}
\end{minipage}}} 
\end{picture} 
\end{center}
\caption{Cuntz-Krieger uniqueness theorems\label{c-k theorems}}
\end{figure}

The $C^*$-algebras associated with topological graphs were introduced in \cite{ka1} as a generalization of both graph $C^*$-algebras and crossed products of commutative $C^*$-algebras by $\Z$-actions. Similarly, $C^*$-algebras arising from topological higher rank graphs \cite{yeend} include as examples  crossed products of commutative $C^*$-algebras by $\Z^k$-actions. Algebras associated to topological higher rank graphs provide 
interesting examples of a  general, intensively investigated but still largely undeveloped theory of algebras associated with product systems over semigroups, \cite{CLSV}. One of the main aims of the present article is 
initialization of a systematic and unified approach to the study of uniqueness properties for universal 
$C^*$-algebras $C^*(\G,\RR)$. To this end, we establish certain  general results for Cuntz-Pimsner algebras associated with product systems over a large class of semigroups, and with coefficients in an arbitrary (not necessarily commutative) $C^*$-algebra $A$.

Uniqueness theorems  are often studied via the associated gauge  action of a dual group $\widehat{G}$ or a coaction of a relevant group $G$, e.g. see  \cite{KLQ}, \cite{CLSV}. In general, existence of  such an additional structure on a universal $C^*$-algebra $C^*(\G,\RR)$ can be thought of as arising from a symmetry in relations $\RR$.  It establishes a Fell bundle structure $\{B_t\}_{t\in G}$ on $C^*(\G,\RR)$. If $G=\Z$ and the Fell bundle $\{B_k\}_{k\in \Z}$ is semisaturated then $C^*$-algebra $C^*(\G,\RR)$ is naturally isomorphic to the crossed product $B_0 \rtimes_{B_1} \Z$, \cite{AEE}, where $B_1$ is treated as a Hilbert bimodule over $B_0$. The first named author proved in \cite{kwa} a uniqueness theorem for $B_0 \rtimes_{B_1} \Z$ under the assumption that a partial homeomorphism of $\widehat{B}_0$ given by Rieffel's induced representation functor $B_1-\Ind$ is topologically free. It seems plausible that similar techniques may lead to a generalization of \cite[Theorem 2.2]{kwa} 
to Fell bundles over arbitrary discrete groups. However, in many important cases (e.g. those listed in Figure 1 above) 
the initial data correspond to semigroups rather than groups. 
The analysis in \cite{kwa-interact} shows that in the context of Cuntz-Pimsner algebras associated with product systems over semigroups $P$, passing from the initial algebra $A$ to the core $B_0$ is  a very nontrivial procedure even in the case $A\cong \C^n$ and $P=\N$. That is why we pursue here a  more ambitious program  
focused on semigroups rather than groups. 

Our initial object is  a product system of $C^*$-correspondences $X$ over a discrete semigroup $P$ and  
with coefficients  in an arbitrary $C^*$-algebra $A$, as defined in \cite{F99}. We impose two critical restrictions 
on the product systems in question, one on the underlying semigroup $P$ and one on the structure of 
fibers $X_p$, $p\in P$. Namely, we assume that $P$ is an Ore semigroup. (Actually, we consider slightly more general semigroups,  satisfying only one-sided cancellation, see Subsection 2.5 below.)  
Such semigroups arise naturally in many contexts,  including dilations, \cite{Laca}, interactions, \cite{exel4}, 
and skew rings, \cite{AGBGP}. Among examples one finds all groups and all commutative cancellative semigroups. 
About the fibers $X_p$, $p\in P$, we assume that the left action of $A$ is given by an injective homomorphism 
into the compacts ${\mathcal K}(X_p)$. We call such an $X$ \emph{regular product system}. 

In the present paper, we are primarily focused on investigations of the Cuntz-Pimsner algebra $\OO_X$ 
associated to a regular product system $X$, as in \cite{F99}. Under our assumptions on $X$ and $P$, 
Fowler's definition seems to work particularly well. For instance, when $P$ is a positive cone in an ordered 
quasi-lattice group $(G,P)$, then $\OO_X$ coincides with the Cuntz-Nica-Pimsner algebra $\mathcal{NO}_X$,  \cite{F99}, \cite{SY},  \cite{CLSV}. However, we stress that the very definition of the Cuntz-Nica-Pimsner 
algebra $\mathcal{NO}_X$ puts severe restrictions on the class of semigroups $P$ one may consider. 
In particular, $P$ itself cannot be a group and this excludes many interesting examples. By contrast, the algebra 
$\OO_X$ does not have this drawback and our results reinforce the perception that (under our assumptions) it is the right object to study. 

In Section \ref{regularproductsytems}, we analyze the structure of the algebra $\OO_X$ associated to a regular 
product system $X$. We show (see Theorem \ref{structure theorem} below) that $\OO_X$ can be 
constructed in the spirit of the Doplicher-Roberts algebras arising in the abstract 
duality theory for compact groups, \cite{dr}. More precisely, we show that  $X$ gives rise to a right tensor
$C^*$-precategory $\K_X$ over the semigroup $P$,  cf. \cite{dr}, \cite{kwa-doplicher}, and $\OO_X$ is a completion of a graded $*$-algebra whose fibers are direct limits of elements of  $\K_X$. In the case $P=\N$ 
(and with no further assumptions on $X$) such an approach was elaborated in  \cite{kwa-doplicher}. 
This description immediately implies that the universal representation of $X$ in $\OO_X$ is injective, thus 
answering a question going back to Fowler's original paper \cite[Remark 2.10]{F99}. 
It also allows us to view and study $\OO_X$ as a cross sectional algebra of a certain Fell bundle $\{(\OO_X)_g\}_{g\in G(P)}$ over the enveloping group $G(P)$ of $P$. Taking advantage of this picture, 
we define the reduced Cuntz-Pimsner algebra $\OO_X^r$ of $X$ as the reduced cross sectional algebra of  $\{(\OO_X)_g\}_{g\in G(P)}$ \cite{Exel}, \cite{qui:discrete coactions}. In the case $\OO_X=\mathcal{NO}_X$, 
our $\OO_X^r$ coincides with the co-universal algebra $\mathcal{NO}_X^r$ defined in \cite{CLSV}. 

In Section \ref{Dual objects}, we present a novel construction of a semigroup  $\X$ dual  to a regular  product system $X$. Elements of $\X$ are multivalued maps on the spectrum of the coefficient algebra $A$. When $P=G$ is a group, these maps are honest homeomorphisms arising through Rieffel's induction, cf. \cite{kwa}. 
The semigroup $\X$ is particularly well suited for the study of uniqueness property and related questions. 

In Section \ref{A uniqueness theorem and simplicity criteria for Cuntz-Pimsner algebras}, 
we formulate a topological aperiodicity condition in terms of the semigroup $\X$.  This is the key ingredient 
entering  our {\em uniqueness theorem}, see Theorem \ref{Cuntz-Krieger uniqueness theorem} below. 
We prove that if $\X$ is topologically aperiodic, then for any faithful Cuntz-Pimsner representation $\psi$ of $X$ 
there exists a conditional expectation from the $C^*$-algebra generated by $\psi(X)$ onto its core 
$C^*$-subalgebra.  Such  conditional expectations  are main tools in analysis of representations and ideal 
structure of $C^*$-algebras under consideration. In particular, they are of critical importance in 
various gauge-invariant uniqueness theorems, see, for instance,  \cite{KLQ}, \cite{CLSV}, 
\cite[Chapter 3]{Raeburn}. When $\OO_X=\OO_X^r$, our uniqueness theorem  states that a representation of $\OO_X$ is faithful if and only if it is faithful on the algebra of coefficients $A$. As a corollary to 
Theorem \ref{Cuntz-Krieger uniqueness theorem}, we obtain the following simplicity criterion. If $\X$ is 
topologically aperiodic then $\OO_X^r$ is simple if and only if $X$ is minimal, see Theorem \ref{simplicity} below.

Applications and examples of our main results are presented in Section \ref{Applications and examples}. 
Logical relationships between the  topological aperiodicity of $\X$ and other aperiodicity conditions mentioned above, when applied to particular examples, are presented schematically on Figure \ref{aperiodicity conditions}. More specifically, in Subsection \ref{Product systems of Hilbert bimodules, Fell bundles and dual partial group 
actions} we consider product systems $X$ whose fibers are Hilbert bimodules. We show that  under 
this assumption the semigroup $\X=\{\X_p\}_{p\in P}$ consists of  partial homeomorphisms and generates 
a partial action of $G(P)$ on $\widehat{A}$, see Proposition \ref{partial dynamical system out of the 
product system} below. In this setting, topological freeness implies topological aperiodicity. 
As a bonus, we obtain uniqueness theorems and simplicity criteria for cross sectional algebras of 
saturated Fell bundles (Corollary \ref{uniqueness theorem and simplicity criterion for cross-sectional algebras}) 
and for twisted crossed products by semigroups of injective endomorphisms with hereditary ranges (Proposition \ref{partial dynamical system out of semigroup of endomorphisms}). 

\begin{figure}[htb]
\begin{center} \begin{picture}(240,146)(0,50)
\small
 \put(56,120){\Ovalbox{\begin{minipage}{4cm}\begin{center}
topological aperiodicity \\
for product systems 
\end{center}
\end{minipage}} 
}
\put(107,167){$\xymatrix{ \,\,   \ar@{=>}[d]   \\     \,\,\,  }$ } 
 \put(50,182){\framebox{$\begin{array}{c} \text{topological freeness} \\  \text{for groups of automorphisms}\end{array}$}}

\put(185,120){$\Longleftrightarrow$ }   \put(207,120){\framebox{$\begin{array}{c} \text{topological freeness} \\  \text{for covering maps}\end{array}$}}

\put(107,106){$\xymatrix{ \,\,   \ar@{=>}[d]   \\     \,\,\,  }$ }  \put(40,59){\framebox{$\begin{array}{c} \text{aperiodicity condition} \\  \text{for topological higher rank graphs}
\end{array}$}}

 \put(30,120){$\Longleftarrow$ }   \put(-45,120){\framebox{$\begin{array}{c}\text{simplicity of} \\  \text{Cuntz's }\mathcal{Q}_\N\end{array}$}}

  \end{picture} \end{center}
  \caption{Relationship between aperiodicity conditions\label{aperiodicity conditions}}
 \end{figure}
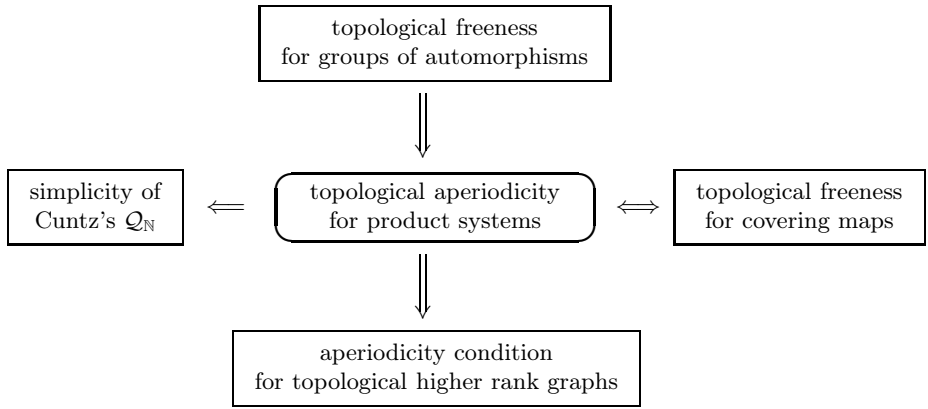

Another motivation for our work comes from theory of graph algebras and their generalizations. Any topological graph $E$ gives rise to a product system $X$ over the semigroup of natural numbers, \cite{ka1}. 
In this context, topological aperiodicity of $\X$ turns out to be strictly stronger than  topological freeness of $E$, but these notions coincide when the range map of $E$ is injective. For example, this latter condition holds 
for topological graphs arising from Exel's crossed products by covering maps, \cite{exel_vershik}, \cite{brv}. 
Our results  give necessary and sufficient conditions for uniqueness and simplicity of such crossed products, 
see Example \ref{Exel-Vershik} below. In Subsection \ref{product system of topological graphs over P}, 
we look at    topological higher rank graphs, \cite{yeend}, and their corresponding product systems over 
$P=\N^k$, \cite{CLSV}. In fact,  we consider certain topological $P$-graphs where $P$ is an arbitrary semigroup of Ore type  and thus we obtain  completely new  uniqueness and simplicity conditions for the associated $C^*$-algebras (discrete $P$-graphs where $(G,P)$ is a quasi-lattice ordered group were considered in \cite{Raeburn_Sims}, \cite{BSV}). As a final example we show that our results yield a quick and elegant way to  see simplicity of the Cuntz algebra $\Qq_\N$, \cite{Cun1}, which has a nice representation as $\OO_X$ where $X$ is a natural product system  as described in \cite{HLS}, see subsection \ref{Cuntz subsection}. 

Finally, we would like to point out two additional applications of our general structural result for $\OO_X$, 
Theorem \ref{structure theorem}. Firstly, we use it to reveal group grading and establish  non-degeneracy 
of the twisted crossed product by a semigroup action of injective endomorphisms, see Proposition 
\ref{pomocnicze on crossed products} below. Secondly, we give a natural definition of the  $C^*$-algebra $C^*(\Lambda,d)$  and the reduced $C^*$-algebra $C^*_r(\Lambda,d)$ associated to a product system of topological graphs over $P$, \cite{fs},  see Subsection \ref{product system of topological graphs over P}.  
These constructions generalize $C^*$-algebras associated to topological higher rank graphs and discrete $P$-graphs \cite{BSV}. Significantly,   
the Cuntz algebra $\Qq_\N$ can be modeled as a  $C^*$-algebra $C^*(\Lambda,d)$ associated to 
a topological $P$-graph   $(\Lambda,d)$ where $P=\N^*$, see Remark \ref{last remark} below.

\subsection{Acknowledgements}

The first  named author was partially supported by the NCN  Grant number DEC-2011/01/B/ST1/03838. The second named author was supported by the FNU Project Grant `Operator algebras, dynamical systems and quantum information theory' (2013--2015). This research was supported by a Marie Curie Intra European Fellowship within the 7th
European Community Framework Programme.

\section{Preliminaries}

This section contains the necessary preliminaries. In addition to more standard material, we discuss multivalued 
maps in Subsection \ref{multi maps section} and semigroups of Ore type in Subsection \ref{Ore subsection}. 


\subsection{Multivalued maps}\label{multi maps section}

We follow  standard conventions, cf. for instance \cite[Chapter 5]{rockafellar}, apart from notion of continuity which will not play any important role in the sequel. Let $M$ and $N$ be sets and $2^N$ be the family of all subsets of $N$. A {\em multivalued mapping} from $M$ to $N$ is by definition  a mapping from $M$ to $2^N$. We denote such a multivalued mapping $f$ by 
$f:M\to N$. Also, we identify the usual (single-valued) mappings with multivalued mappings taking values in singletons. We denote 
$$
D(f):=\{x\in M: f(x)\neq \emptyset\}, \qquad f(M):=\{y\in N: y \in f(x) \textrm{ for some } x\in M\}=\bigcup_{x\in M} f(x)
$$
the domain and the image of $f$ respectively. We put $f(A):=\bigcup_{x\in A} f(x)$ for a subset 
$A$ of $M$, and   define preimage of $B\subseteq  N$ to be the set
$$ 
f^{-1}(B):=\{x\in M: f(x)\cap B \neq \emptyset \}.
$$ 
This goes perfectly well with the natural definition of the multivalued inverse $f^{-1}$ of $f$, where
 $$
y\in  f^{-1}(x) \stackrel{def}{\Longleftrightarrow}  x \in f(y).
 $$
 For two multivalued  mappings $f,g:M\to N$  we write $f\subset g$ whenever $f(x)\subset g(x)$ for all $x\in M$. 
  Composition  of two multivalued maps $f:M\to N$ and $g:N\to L$ is the multivalued map $g\circ f:M\to L$ given by  
  $$
 (g\circ f)(x):=\bigcup_{y \in f(x)} g(y).
$$ 
One checks that the obvious rule $(f\circ g)^{-1}=g^{-1}\circ f^{-1}$ holds. However note that
\begin{equation}\label{multivalued identity}
(f\circ f^{-1})(x)=\bigcup_{ x\in f(y)} f(y) 
\end{equation}
is either empty or it is a subset containing $x$, possibly larger than $\{x\}$. The former happens when 
$x$ does not belong to the range of $f$ and the latter otherwise. 

If $M$ and $N$ are topological spaces,  we say that a multivalued map $f:M\to N$ is continuous if 
$f^{-1}(U)$ is open for every open subset $U$ of $N$. In the literature, this is usually  
taken as a definition of  lower  semi-continuity. But since we will not make use of upper semi-continuity we do not make a distinction.  


\subsection{Hilbert modules, $C^*$-correspondences and induced representations}

Throughout  this section, $A$, $B$ and $D$ are $C^*$-algebras. We adhere to the convention that $\beta(A,B)=\clsp\{\beta(a,b)\in D\mid a\in A,b\in B\}$ 
for  maps $\beta\colon A\times B\to D$  such as inner products, multiplications or representations. By  homomorphism, epimorphism, etc. we always mean an involution preserving map.  All ideals in 
$C^*$-algebras are  assumed to be closed and two-sided. 

We  adopt the standard notations and definitions of objects related to  Hilbert modules, cf. for instance \cite{morita}. A right Hilbert $B$-module is a Banach space $X$ which is a right $B$-module equipped with an  
$B$-valued inner product $\langle \cdot , \cdot \rangle _B:X\times X \to B$.  If  $X$,  $Y$ are right Hilbert 
$B$-modules then  $\LL(X,Y)$ stands for the space of adjointable operators from $X$ into $Y$. Also,  
the  space  of "compact" operators from $X$ to $Y$ is defined as 
$$
\K(X,Y)=\clsp\{\Theta_{y,x}: x\in X,y\in Y\}\subseteq \LL(X,Y), 
$$ 
where 
$$
\Theta_{y,x}(z)=y\langle x,z\rangle_B, \;\;\; z\in X.
$$ 
In particular,  $\K(X):=\K(X,X)$ is 
an ideal in the $C^*$-algebra $\LL(X):=\LL(X,X)$. 

A \emph{$C^*$-correspondence} from $A$ to $B$ is a right Hilbert $B$-module $X$ equipped with 
a homomorphism $\phi_X:A\rightarrow\LL(X)$.  We refer to $\phi_X$  as to the left action of $A$
on $X$ and write $a\cdot x=\phi_X(a)x$, for $a\in A$, $x\in X$. If $A=B$ then we call $X$  a 
$C^*$-correspondence with coefficients in $A$. A \emph{Hilbert $A$-$B$-bimodule} is a $C^*$-correspondence 
$X$ from $A$ to $B$ equipped with  a left $A$-valued inner product
${_A\langle} \cdot , \cdot \rangle:X\times X \to A$ such that 
$$
x\langle y , z \rangle_B={_A\langle} x , y \rangle z, \qquad x,y,z\in X.
 $$ 
Equivalently, $X$ is both a left Hilbert $A$-module and a right Hilbert $B$-module satisfying the above condition. 
If, in addition,  ${_A\langle} X , X \rangle=A$ and $\langle X , X \rangle_B=B$, then $X$ is an imprimitivity 
$A$-$B$-bimodule. For instance, every $C^*$-algebra $A$ can be considered a $C^*$-correspondence 
(actually, an imprimitivity $A$-$A$-bimodule), denoted $_AA_A$, 
where $\langle a, b\rangle _A=a^*b$, $_A\langle a, b \rangle = ab^*$, and both left and right action is simply 
multiplication in $A$. 

We note that there is a one-to-one correspondence between representations $\pi:A\to \B(H)$ 
of $A$ on a Hilbert space $H$ and $C^*$-correspondences $X=H$ from $A$ to $\C$ (where left action 
is induced by $\pi$). We say that such $C^*$-correspondences associated to the representation $\pi$. 
Furthermore, any right Hilbert $A$-module can be considered  a Hilbert 
$\K(X)$-$A$-bimodule, where ${_{\K(X)}\langle} x, y\rangle=\Theta_{x,y}$.

If $X$ is a right Hilbert $A$-module and $Y$ is a Hilbert $A$-$C$-bimodule, then the internal tensor 
product $X\otimes_A Y=\clsp\{x\otimes_A y: x\in X, y\in Y\}$ (balanced over $A$) is a right Hilbert 
$C$-module  with the right action induced from $Y$ and the $C$-valued inner product given by 
$$
\langle x_1\otimes_A y_1 , x_2\otimes_A y_2\rangle_C=\langle y_1, \phi_Y(\langle x_1,
x_2\rangle_A) y_2\rangle_C,\,\,\, \textrm{ for }x_i\in X \textrm{ and } y_i\in Y,\,\, i=1,2.
$$  
If, in addition,  $X$ is a $C^*$-correspondence from $B$ to $A$, then $X\otimes_A Y$ is a 
$C^*$-correspondence from $B$ to $C$ with the left action implemented by the
homomorphism $B\ni a \mapsto\phi_X(a)\otimes_A 1_Y\in\LL(X\otimes Y)$, where $1_Y$ is the unit in $\LL(Y)$. In the sequel, in order not to overload notation, we will often write simply $X\otimes Y$ and $x\otimes y$ for tensor products, when $A$ is understood. 

In the above scheme, a particularly important special case  occurs when $Y$ is a $C^*$-correspondence  
from $A$ to ${\mathbb C}$ associated  to a representation $\pi:A\to \B(H)$. Then for   any 
$C^*$-correspondence $X$ from $B$ to $A$ the $C^*$-correspondence $X\otimes_A Y$ is  associated 
to a certain representation of $B$ which we denote by $X\dashind(\pi)$ and call  \emph{representation induced from $\pi$ by $X$}. More precisely,  let $X\otimes_\pi H = \clsp X\otimes H$ be a Hilbert space equipped 
with the inner product
$$
\langle x_1\otimes_\pi h_1, x_2\otimes_\pi h_2 \rangle_{\C} = \langle h_1,\pi(\langle x_1, x_2 \rangle_{A})h_2\rangle_{\C}.
$$
Then $X\dashind(\pi)$ is a representation of $B$ on $X\otimes_\pi H$ such that 
\begin{equation}\label{induced representation definition}
X\dashind(\pi)(b)  (x\otimes_\pi h) = (b x)\otimes_\pi h, \qquad  b\in B.
\end{equation}
In particular, if  $X$ is  an imprimitivity $B$-$A$-bimodule, then by the celebrated Rieffel's result, cf. e.g.  
\cite[Theorem 3.29, Corollaries 3.32 and 3.33]{morita},  the induced representation functor $X\dashind$ 
factors through 
to the homeomorphism $[X\dashind]:\SA\to \widehat{B}$ between the spectra of $A$ and $B$. 
The inverse of this homeomorphism is given by induction with respect to a Hilbert module dual to $X$.
Here, a dual to  a right Hilbert $A$-module $X$ means a left Hilbert $A$-module $\widetilde{X} $ for which 
there exists an antiunitary $\flat:X \to \widetilde{X}$.  A natural model for  $\widetilde{X}$ is $\K(X,{_A}A_A)$ where $\flat(x)y=\langle x, y\rangle_A$. In particular, if $X$ is a Hilbert $A$-$B$-bimodule 
then $\widetilde{X}$ is a Hilbert $B$-$A$-bimodule.


\subsection{Product systems, their representations and Cuntz-Pimsner algebras}\label{Product systems preliminaries}

Let $A$ be a $C^*$-algebra and $P$ a discrete semigroup with identity $e$. 
A \emph{product system} over $P$ with coefficients in $A$ is a semigroup $X= \bigsqcup_{p\in P}X_{p}$, 
equipped with a semigroup homomorphism $d\colon X \to P$ such that
\begin{enumerate}\renewcommand{\theenumi}{P\arabic{enumi}}
\item $X_p = d^{-1}(p)$ is a $C^*$-correspondence with coefficients in  $A$ for each $p\in P$.
\item $X_e$ is the standard bimodule $_AA_A$.
\item The multiplication on $X$ extends to isomorphisms $X_p \otimes_A X_q \cong X_{pq}$ 
for $p,q \in P \setminus \{e\}$ and the right and left actions of $X_e = A$ on each $X_p$.
\end{enumerate}
For each $p\in P$, we denote by $\langle\cdot,\cdot\rangle_p$ the $A$-valued
inner product on $X_p$  and by $\phi_p$ the homomorphism from $A$ into $\LL(X_p)$
 which implements the left action of $A$ on $X_p$.
  Given $p, q \in P$ with $p \not= e$, there is a homomorphism $\iota^{pq}_p \colon 
\LL(X_p) \to \LL(X_{pq})$ characterised by
\begin{equation}
\iota^{pq}_p(T)(xy) = (Tx)y,\,\,\,  \text{ where $x \in X_p$, $y \in X_{q}$ and $T \in \LL(X_p)$.}
\end{equation}
We recall   that the  map 
\begin{equation}\label{C-correspondence isomorphism}
X_p\ni x\to t_x \in \K(A,X_p) \qquad \textrm{ where } t_x(a)=xa,
\end{equation} yields a $C^*$-correspondence isomorphism  $X_p\cong \K(A,X_p)$. Here 
$\K(A,X_p)$ is a $C^*$-correspondence with $A$-valued inner product $\langle T,S\rangle_A=T^*S$ 
and point-wise  actions. Thus we may define $\iota^p_e \colon \K(X_e)\to
\LL(X_{p})$ simply by letting $\iota^p_e(t_a)=\phi_p(a)$ for  $p\in P$, $a\in A$, \cite[\S  2.2]{SY}.

 A map $\psi$ from $X$ to a  $C^*$-algebra $B$ is a Toeplitz
\emph{representation of $X$ in $B$} if the following conditions hold:
\begin{enumerate}\renewcommand{\theenumi}{T\arabic{enumi}}
\item  for each $p\in P\setminus\{e\}$, $\psi_p:=\psi\vert_{X_p}$ is linear,  and  $\psi_e$ is a homomorphism, 
\item $\psi_p(x)\psi_q(y)=\psi_{pq}(xy)\; \; $ for $ \; x\in X_p$, $y\in X_q$, $p,q\in P$,
\item $\psi_p(x)^*\psi_p(y)=\psi_e(\langle x, y\rangle_p)$ for $x, y\in X_p$.
\end{enumerate}
It is well known that, for each $p\in P$ there exists a $*$-homomorphism
$\psi^{(p)} : \K(X_p) \longrightarrow B$ such that $\psi^{(p)}(\Theta_{x,y})=
\psi_p(x)\psi_p(y)^*\, , \; \text{for}\; x,y\in X_p$. The representation $\psi$ is 
called {\it Cuntz-Pimsner covariant}  if
\begin{itemize}
\item[(CP)]  $\psi^{(p)}(\phi_p(a))=\psi_e(a)$ for all $a\in A$ and $p\in P$.
\end{itemize}

As introduced by Fowler \cite{F99}, the Cuntz-Pimsner algebra $\OO_X$ of a product system 
$X$ is a universal $C^*$-algebra
for the Cuntz-Pimsner covariant representations. We denote by $j_X$ the universal representation of $X$ in $\OO_X$. Hence for any Cuntz-Pimsner representation $\psi:X\to B$ there is a unique epimorphism
$\Pi_\psi:\OO_X\to C^*(\psi(X))$ such that $j_X(x)=\psi(x)$ for all $x\in X$. We call $\Pi_\psi$ `the 
integrated representation'. 

It is well known and not hard  to see that a necessary condition for  $j_X$ to be injective (and hence for $\OO_X$ to be nondegenerate) is that all of the homomorphisms $\phi_p$, $p\in P$, are injective. It is known that this condition is also  sufficient \cite[Corollary 5.2]{SY} when  $P$ is a directed positive cone in a  quasi-lattice ordered group $(G, P)$ and each $\phi_p$ acts by compacts. In this  case, $\OO_X$ coincides with the so-called 
Cuntz-Nica-Pimsner algebra $\NO_X$ introduced in \cite{SY}.     
We recall that a partially ordered group  
$(G,P)$, consisting of a group $G$ and its subsemigroup $P\subseteq G$ such that  
$P \cap P^{-1} = \{e\}$,   is a \emph{quasi-lattice ordered group} if, under the partial order 
$g \le h \iff g^{-1}h \in P$, any two elements $p, q$ in $G$ with a common
upper bound in $P$ have a least common upper bound $p \vee q$
in $P$, \cite{N} and \cite[Lemma 7]{CL2002}. 
The semigroup $P$ is \emph{directed} if each pair of elements in $P$ has an upper bound:
$$
(\forall {p,q \in P}) \,\, (\exists {s \in P})\,\,\,     p,q \leq s. 
$$ 


\subsection{Co-actions, Fell bundles and their $C^*$-algebras}

Let $G$ be a discrete group. The shortest definition of a \emph{Fell bundle} (also called $C^*$-algebraic bundle) over $G$ is that it is a collection $\B=\{B_g\}_{g\in G}$ of closed subspaces of a $C^*$-algebra $B$ such that $B_g^*=B_{g^{-1}}$ and $B_g B_h\subseteq B_{gh}$ for all $g,h \in G$. Then the direct sum 
$\bigoplus_{g\in G} B_g$ is a $*$-algebra. In general, there are many different $C^*$-norms on 
$\bigoplus_{g\in G} B_g$. However, it is well known that there always exists a maximal such norm 
and it satisfies, cf. \cite[Lemma 1.3]{qui:discrete coactions} or \cite{Exel}, the inequality 
\begin{equation}\label{topological grading inequality}
\|a_e\|\leq \|\sum_{g \in G} a_g\|, \qquad \textrm{ for all } \sum_{g \in G} a_g\in \bigoplus_{g\in G} B_g,\,\,a_g\in B_g, \,\, g\in G.
\end{equation}
The completion of $\bigoplus_{g\in G} B_g$ in  this maximal $C^*$-norm is called \emph{cross sectional algebra}
 of $\B$ and it is denoted $C^*(\B)$. Moreover, it follows from 
\cite[Theorem 3.3]{Exel} that there is also  a minimal $C^*$-norm on $\bigoplus_{g\in G} B_g$ satisfying \eqref{topological grading inequality} and a completion of $\bigoplus_{g\in G} B_g$ in this minimal $C^*$-norm is naturally isomorphic to the \emph{reduced cross sectional algebra} $C_r^*(\B)$, as introduced in 
\cite[Definition 2.3]{Exel} or \cite[Definition 3.5]{qui:discrete coactions}. Both algebras 
$C^*(\B)$ and $C^*_r(\B)$  are equipped with natural coactions of $G$. 

We recall (see, for example, \cite{qui:discrete coactions}) that a  coaction
of a discrete group $G$ on a $C^*$-algebra $B$ is an injective and nondegenerate homomorphism
$\delta \colon B\to B\otimes C^*(G)$  satisfying  the coaction identity 
$(\delta\otimes id_{C^*(G)})\circ \delta= (id_B \otimes \delta_G)\circ \delta$, where   
$\delta_G \colon C^*(G)\to C^*(G) \otimes C^*(G)$ is  given by
$\delta_G(g)=i_G(g)\otimes i_G(g)$ and $i_G:G \to M(C^*(G))$ is the universal representation of $G$. 
The spectral subspaces $B_g^\delta:=\{a\in B\mid \delta(a)=a\otimes i_G(g)\}$, $g\in G$, 
form a Fell bundle $\B=\{B_g^\delta\}_{g\in G}$ and yield a $G$-gradation of $B$ such that    $B=\overline{\bigoplus_{g\in G}B_g}$. Moreover,  the norm on $\bigoplus_{g\in G}B_g$ inherited from 
$B$ satisfies inequality \eqref{topological grading inequality}.


\subsection{Semigroups of Ore type}\label{Ore subsection}

A  (left-reversible) Ore semigroup is a cancellative semigroup $P$ which is \emph{left reversible}, that is $sP \cap tP\neq \emptyset$, for all $s,t \in P$. Usually one considers right-reversible Ore semigroups  but the left version is more appealing for our purposes. It is well known that a semigroup $P$ is Ore precisely when it can be 
embedded in a group $G$ in such a way that  $G=PP^{-1}$, cf. \cite{CP,Laca,AGBGP}. For further reference, 
we include an elementary proof of a slightly more general statement. 

We let $P$ be a left reversible and left cancellative semigroup with identity $e$. We call such a $P$  
\emph{semigroup of Ore type} (it is Ore if and only if it is right cancellative as well). The semigroup 
structure induces a left-invariant preorder on $P$ defined  as:
\begin{equation}\label{semigroup pre-order}
p\leq q\; \stackrel{def}{\Longleftrightarrow}\;  pr=q \,\,\,\,\textrm{ for some } r\in P.
\end{equation}
If $(G, P)$ is ordered group the (pre)order on $P$  coincides with the one inherited from $(G, P)$. In terms of preorder \eqref{semigroup pre-order}, left reversibility of $P$ simply means that  $P$ is directed.

If  $p,q\in P$  then left cancellativity implies that relation $pr=q$  determines $r\in P$ uniquely.  
Thus, we introduce the notation
 $$
 p^{-1}q: =r \quad \textrm{ whenever }\,\,\,  pr=q.
 $$
The  \emph{enveloping group} or a \emph{group of fractions}   of $P$ is the universal group with 
the set of generators equal to $P$  and relations $xy=z$, whenever such identity holds in $P$. 
To construct the group of fractions explicitly, we first introduce a relation $\sim$  on $P\times P$ as:
\begin{equation}\label{equivalence ore relation}
(p_1,p_2)\sim (q_1,q_2)\,\,  \stackrel{def}{\Longleftrightarrow} \,\, p_1 p=q_1 q, \quad p_2p=q_2q\,\,\, \textrm{ for some } p,q\in P.
\end{equation}
\begin{lem} Relation \eqref{equivalence ore relation} is an equivalence relation on $P\times P$.
\end{lem}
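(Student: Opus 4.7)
The plan is to verify the three axioms of an equivalence relation in turn, with only transitivity requiring any real work. Reflexivity of $\sim$ is immediate: for any pair $(p_1,p_2)$, take $p=q=e$ and use that $e$ is a two-sided identity of $P$. Symmetry is built into the definition, since interchanging the roles of $(p_1,p_2)$ with $(q_1,q_2)$ also interchanges the witnessing elements $p$ and $q$.

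The main point is transitivity, and this is where left reversibility of $P$ enters decisively. Suppose that $(p_1,p_2)\sim(q_1,q_2)$ with witnesses $p,q\in P$, so that
\[
p_1 p = q_1 q, \qquad p_2 p = q_2 q,
\]
and that $(q_1,q_2)\sim(r_1,r_2)$ with witnesses $s,t\in P$, so that
\[
q_1 s = r_1 t, \qquad q_2 s = r_2 t.
\]
To produce witnesses showing $(p_1,p_2)\sim(r_1,r_2)$, I would use left reversibility applied to $q$ and $s$: there exist $x,y\in P$ with $qx=sy$. Setting $u:=px$ and $v:=ty$, I compute
\[
p_1 u = (p_1 p)x = (q_1 q)x = q_1(qx) = q_1(sy) = (q_1 s)y = (r_1 t)y = r_1 v,
\]
and the same string of equalities with indices $1$ replaced by $2$ gives $p_2 u = r_2 v$. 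Hence $(p_1,p_2)\sim(r_1,r_2)$.

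The key obstacle is exactly the combinatorial step above: producing a single pair of elements $u,v\in P$ that simultaneously mediates both coordinates of the two given equivalences. Without some form of common-multiple property, there is no way to splice together the witnesses $(p,q)$ and $(s,t)$, which is precisely why left reversibility ($qP\cap sP\neq\emptyset$) is the right hypothesis; note that left cancellativity is not used here, though it will be needed elsewhere (e.g.\ to make $p^{-1}q$ well defined and to verify that $\sim$ is compatible with a group structure on $P\times P/\!\sim$).
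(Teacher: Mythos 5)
Your proof is correct and follows essentially the same route as the paper: the paper's transitivity argument also picks a common element $t\geq q,s$ (i.e.\ $t=qx=sy$, which is exactly left reversibility) and splices the two chains of equalities through it, with your $u=px$, $v=ty$ matching the paper's $p(q^{-1}t)$, $r(s^{-1}t)$. Your side remark that left cancellativity is not actually needed here is also accurate; the paper's use of the notation $q^{-1}t$ is only a convenience.
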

\begin{proof}
Reflexivity and symmetry are obvious. To show transitivity, assume that  in addition to \eqref{equivalence ore relation} we also have $(q_1,q_2)\sim (r_1,r_2)$, where $q_1 s=r_1 r$, $q_2s=r_2r$ for some $s,r\in P$. 
Then for any $t\geq q, s$ we have
$$
p_1p (q^{-1}t)=q_1q(q^{-1}t)=q_1 t= q_1 s (s^{-1}t)= r_1r (s^{-1}t).  
$$ 
Similarly, one shows that $p_2p (q^{-1}t)=r_1 r (s^{-1}t)$. Hence $(p_1,p_2)\sim (r_1,r_2)$.
\end{proof}
We use square brackets to denote the equivalence classes of relation \eqref{equivalence ore relation}:
$$
[p_1,p_2]:=\{(q_1,q_2)\in P\times P: (p_1,p_2)\sim (q_1,q_2)\}, 
$$
and denote the quotient set by $G(P):=P\times P/\sim$. We define a product on $G(P)$  by the formula
\begin{equation}\label{group action}
[p_1,p_2] \circ [q_1,q_2] := [p_1 (p_2^{-1}s),q_2(q_1^{-1}s)] \,\,\,\textrm{ for some } \,\,\, s\geq p_2, q_1 .
\end{equation}
This definition is correct due to left cancellativity of $P$. 
\begin{prop}[Ore's Theorem]
For the left cancellative and directed  semigroup $P$ the quotient set $G(P)$ with the product 
\eqref{group action} is a group such that $G(P)=\iota(P)\iota(P)^{-1}$, where 
$$
P \ni p \stackrel{\iota}{\longmapsto} [p,e]\in G(P)
$$
is a semigroup homomorphism. This homomorphism is injective if and only if $P$ is right cancellative. 
\end{prop}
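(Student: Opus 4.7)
The plan is to verify the usual group axioms for the operation defined in \eqref{group action}, then read off the remaining statements as easy consequences.

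\textbf{Step 1: Well-definedness.} First I would check that the right-hand side of \eqref{group action} does not depend on the choice of $s\geq p_2, q_1$, which exists by directedness. If $s'\geq p_2, q_1$ is another such element, pick $t\geq s, s'$, write $t = s u = s' v$, and observe that $p_1(p_2^{-1}s)\, u = p_1(p_2^{-1}t) = p_1(p_2^{-1}s')\, v$, with the analogous identity in the second coordinate; hence the two candidate classes are equivalent. Next I would verify independence of the representatives of $[p_1,p_2]$ and $[q_1,q_2]$. Assuming $p_i a = p_i' b$ for $i=1,2$, pick a common upper bound of $p_2,q_1,p_2',q_1'$, rewrite the products through that bound, and check that the resulting pairs are $\sim$-related; a similar argument handles the second factor. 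Left cancellativity is used throughout to make sense of the $p^{-1}q$ notation, and directedness guarantees enough common upper bounds.

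\textbf{Step 2: Group structure.} With the operation well defined, associativity is a direct computation: given three classes, pick a simultaneous upper bound $s$ for all the relevant elements and compute both bracketings through $s$, so that they yield the same pair in $P\times P$. The class $[e,e]$ is a two-sided identity, for $[e,e]\circ[p_1,p_2]$ with $s=p_1$ gives $[p_1,p_2]$, and similarly on the other side. Finally, $[p_2,p_1]$ is a two-sided inverse of $[p_1,p_2]$, because choosing $s=p_2$ yields $[p_1,p_1]$, and any class of the form $[r,r]$ is $[e,e]$ since $r\cdot e = e\cdot r$.

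\textbf{Step 3: The homomorphism $\iota$ and $G(P)=\iota(P)\iota(P)^{-1}$.} For $p,q\in P$ I compute $\iota(p)\circ\iota(q)=[p,e]\circ[q,e]$ with $s=q$, which equals $[pq,e]=\iota(pq)$. For the generation statement, $\iota(p_2)^{-1}=[e,p_2]$, and $[p_1,e]\circ[e,p_2]$ with $s=e$ equals $[p_1,p_2]$, so every element of $G(P)$ has the required form.

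\textbf{Step 4: Injectivity of $\iota$.} From the definition of $\sim$, $\iota(p)=\iota(q)$ means $(p,e)\sim(q,e)$, which forces the second coordinates to give $a=b$, and hence reduces to the existence of some $a\in P$ with $pa=qa$. Thus injectivity of $\iota$ is exactly the statement that $pa=qa$ implies $p=q$, i.e.\ right cancellativity of $P$.

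The main obstacle is Step~1: since two different equivalence classes may be represented by pairs with very different second coordinates, both the independence of the auxiliary element $s$ and the independence of representatives require routing all computations through a sufficiently large common upper bound and applying left cancellativity. Once that bookkeeping is done, Steps 2--4 are essentially formal.
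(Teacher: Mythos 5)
Your proof is correct and follows essentially the same route as the paper: direct verification of the group axioms for the fraction construction, with the identity, inverses, the homomorphism property of $\iota$, and the injectivity criterion handled exactly as in the text (the paper merely asserts well-definedness before the proposition, which you spell out, and it carries out the associativity computation explicitly via nested upper bounds rather than your single large bound, but the bookkeeping is the same). One small point in your Step 1: when checking independence of the representative, say $(p_1,p_2)\sim(p_1',p_2')$ with $p_1a=p_1'b$ and $p_2a=p_2'b$, a common upper bound of $p_2,q_1,p_2',q_1'$ need not dominate $p_2a$; you should instead take $t\geq p_2a,\,q_1$ (legitimate, since you have already shown the product is independent of the choice of upper bound), whereupon $t=p_2ac=p_2'bc$ gives $p_1(p_2^{-1}t)=p_1ac=p_1'bc=p_1'(p_2'^{-1}t)$ on the nose and the two outputs coincide.
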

\begin{proof}
Clearly, $[e,e]$ is a neutral element for product $\circ$ and $[q,p]$ is the inverse of $[p,q]$. To show associativity 
of $\circ$, let $p_i,q_i,r_i \in P$, $i=1,2$, and choose any   $s\geq p_2, q_1$ and $t\geq q_2(q_1^{-1}s) , r_1$. Then 
$$
t= q_2(q_1^{-1}s)z\,\,\,\textrm{  and  }\,\,\, s=p_2y \,\,\,\textrm{  for some  } \,\,\, y,z\in P.
$$
 Thus we have
\begin{align*}
\big([p_1,p_2]\circ [q_1,q_2]\big)\circ [r_1,r_2] & = [p_1 (p_2^{-1}s),q_2(q_1^{-1}s)] \circ [r_1,r_2]
\\
&= [p_1 (p_2^{-1}s) \Big(\big(q_2(q_1^{-1}s)\big)^{-1}t \Big) , r_2(r_1^{-1}t)]
\\
&= [p_1 (p_2^{-1}s)z) , r_2(r_1^{-1}t)].
\end{align*}
On the other hand,  putting $u:=t$  and 
$$
w:=q_1(q_2^{-1}u)=q_1(q_2^{-1}t)=q_1(q_2^{-1}q_2(q_1^{-1}s))z=q_1(q_1^{-1}s)z=sz=p_2yz
$$
we get $u \geq q_2 , r_1$ and  $w \geq q_1(q_2^{-1}u), p_2$. 
Consequently,  
\begin{align*}
[p_1,p_2]\circ \big([q_1,q_2]\circ [r_1,r_2]\big) & =  [p_1,p_2]\circ  [q_1 (q_2^{-1}u),r_2(r_1^{-1}u)] 
\\
&= [p_1 (p_2^{-1}w), r_2(r_1^{-1}u) \big(q_2(q_1^{-1}u)\big)^{-1}w]
\\
&= [p_1 (p_2^{-1}s)z, r_2(r_1^{-1}t)],
\end{align*}
which proves associativity of $\circ$. As $[p,e] \circ [q,e] =[pq,e]$, because $q\geq q,e$, 
we see that $\iota$ is a semigroup homomorphism. Moreover,  $
[p,e]=[q,e]$ if and only if   $p t=q t$ for some $t\in P$, and therefore $\iota$ is injective if and 
only if $P$ is right cancellative.
\end{proof}
\begin{rem}\label{kernel of Ore}
It follows from the above that the relation $p \sim_R q$ ${\Longleftrightarrow}$ $pr=qr$, for some $r\in P$, is a semigroup congruence on $P$ and the quotient semigroup $P/\sim_R$ is an Ore semigroup whose 
enveloping group is naturally isomorphic to $G(P)$. 
\end{rem}


\section{Regular product systems of  $C^*$-correspondences and their $C^*$-algebras}
\label{regularproductsytems}  

In this section, we first introduce and discuss certain product systems of $C^*$-correspondences  satisfying additional regularity conditions,  and then construct their associated Cuntz-Pimsner algebras and their reduced versions in the spirit of the Doplicher-Roberts algebras \cite{dr}. Our construction involves an object that may be viewed as a right tensor $C^*$-precategory over $P$, see   \cite{kwa-doplicher}. Regular product systems introduced in this section and their $C^*$-algebras will play a central role in the remainder of this article. 


\subsection{Regular product systems and their right tensor $C^*$-precategories}

\begin{defn}
Let $X$ be a $C^*$-correspondence with coefficients in $A$. We  say  $X$ is  \emph{regular} if its left action 
is injective and  via compact operators, that is  
\begin{equation}\label{standing assumptions}
 \ker\phi=\{0\}\qquad \textrm{ and  }\qquad  \phi(A)\subseteq \K(X).
 \end{equation}
  We say that a product system  $X := \bigsqcup_{p\in P}X_{p}$ over a  semigroup $P$ is  {\em regular} 
if each fiber $X_p$, $p\in P$, is a regular $C^*$-correspondence.
 \end{defn}

The notions of  regularity and tensor product are compatible  in the sense that the tensor product of two regular $C^*$-correspondences is automatically regular, see Proposition \ref{regular proposition} below.  

Before proceeding further we need a technical Lemma \ref{lemma on tensoring regular correspondences} 
whose assertion  is probably well known to experts, but  we include a  proof for the sake of completeness. 
\begin{lem}\label{lemma on tensoring regular correspondences}
Let $Y$ be a regular $C^*$-correspondence with coefficients in $A$ and let  $X$, 
$Z$ be right Hilbert $A$-modules.
\begin{itemize}
\item[i)] For each $x\in X$, the mapping 
$$
Y\ni y\stackrel{T_x}{\longrightarrow} x\otimes y\in X\otimes Y
$$ 
is compact, that is $T_x\in \K(Y,X\otimes Y)$. Furthermore, we have $\|T_x\|=\|x\|$.
\item[ii)] For each $S\in \K(X,Z)$, we have $S\otimes 1_Y \in \K(X\otimes Y,Z\otimes Y)$ and the mapping
\begin{equation}\label{tensoring on the right}
\K(X,Z)\ni S \longmapsto S\otimes 1_Y \in \K(X\otimes Y,Z\otimes Y)
\end{equation}
is isometric. It is surjective whenever $\phi_Y:A\to \K(Y)$ is.
\end{itemize}
\end{lem}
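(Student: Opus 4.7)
The plan for part (i) is to first identify $T_x$ as adjointable, with adjoint characterised on elementary tensors by $T_x^*(z\otimes y)=\phi_Y(\langle x,z\rangle_A)y$, so that $T_x^*T_x=\phi_Y(\langle x,x\rangle_A)$. The norm identity $\|T_x\|=\|x\|$ then drops out of the $C^*$-identity together with the isometricity of the injective $\phi_Y$. For compactness, let $(u_\lambda)$ be a bounded approximate unit in $A$. Regularity $\phi_Y(A)\subseteq\K(Y)$ gives $\phi_Y(u_\lambda)\in\K(Y)$, hence $T_x\phi_Y(u_\lambda)\in\K(Y,X\otimes Y)$. A short computation (in a unitisation of $A$) yields
\[
(T_x-T_x\phi_Y(u_\lambda))^*(T_x-T_x\phi_Y(u_\lambda))=\phi_Y\bigl((1-u_\lambda)\langle x,x\rangle_A(1-u_\lambda)\bigr),
\]
whose norm tends to zero by standard approximate unit considerations; the $C^*$-identity then gives $T_x\phi_Y(u_\lambda)\to T_x$ in norm, and $T_x\in\K(Y,X\otimes Y)$ follows by closedness.

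For part (ii), I first treat rank-one operators. A calculation on elementary tensors gives $\Theta_{z,x}\otimes 1_Y=T_z\circ T_x^{*}$ for $z\in Z$ and $x\in X$, and the right-hand side is compact by part (i). Linearity and density then yield $S\otimes 1_Y\in\K(X\otimes Y,Z\otimes Y)$ for every $S\in\K(X,Z)$, which shows that the map \eqref{tensoring on the right} is well defined. For the isometry, the $C^*$-identity $\|S\otimes 1_Y\|^2=\|(S^*S)\otimes 1_Y\|$ reduces matters to the case $X=Z$, where it suffices to check that the $*$-homomorphism $\K(X)\ni T\mapsto T\otimes 1_Y\in\LL(X\otimes Y)$ is isometric, i.e.\ injective. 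If $T\otimes 1_Y=0$ then $\langle y,\phi_Y(\langle Tx,Tx\rangle_A)y\rangle_A=\|Tx\otimes y\|^2=0$ for every $y\in Y$, which forces $\phi_Y(\langle Tx,Tx\rangle_A)=0$ and then $Tx=0$ by injectivity of $\phi_Y$.

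The subtlest point is surjectivity when $\phi_Y\colon A\to\K(Y)$ is surjective. The plan is to hit every rank-one generator of $\K(X\otimes Y,Z\otimes Y)$. Given $y,y'\in Y$, surjectivity and injectivity of $\phi_Y$ produce a unique $a_{y,y'}\in A$ with $\phi_Y(a_{y,y'})=\Theta_{y,y'}$. Using the balancing relation $w\otimes\phi_Y(a)y=(w\cdot a)\otimes y$ in $X\otimes_A Y$, a computation on elementary tensors yields
\[
\Theta_{z\otimes y,\,x\otimes y'}=\Theta_{z,\,x\cdot a_{y,y'}^{*}}\otimes 1_Y,
\]
so the image of \eqref{tensoring on the right} contains every rank-one element of $\K(X\otimes Y,Z\otimes Y)$. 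Since that image is already closed (the map being isometric), density of rank-one operators inside $\K$ completes the proof. The main place one could stumble is the bookkeeping of left vs.\ right module conventions needed to migrate $\phi_Y(a_{y,y'})$ from the second tensor factor to the first as multiplication by $a_{y,y'}^{*}$; once that is straight, nothing deeper is involved.
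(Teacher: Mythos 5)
Your proof is correct and follows essentially the same route as the paper's: the norm identity via $T_x^*T_x=\phi_Y(\langle x,x\rangle_A)$, the identity $\Theta_{z,x}\otimes 1_Y=T_zT_x^*$, reduction of the isometry claim to injectivity of the $*$-homomorphism in the case $Z=X$, and an explicit rank-one formula for surjectivity (your $\Theta_{z,\,x\cdot a_{y,y'}^*}\otimes 1_Y$ coincides with the paper's $\Theta_{z\phi_Y^{-1}(\Theta_{y,y'}),\,x}\otimes 1_Y$ by the standard identity $\Theta_{za,x}=\Theta_{z,xa^*}$). The only divergence is cosmetic: for compactness of $T_x$ the paper factors $x=x_0a$ by Cohen--Hewitt and approximates $\phi_Y(a)$ by finite-rank operators, whereas you reach the same conclusion with an approximate unit.
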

\begin{proof}
Ad (i). Note that  $T_x\in \LL(Y,X\otimes Y)$ and $T_x^*( x_0\otimes y_0)= \langle x, x_0\rangle_A y_0$. Let $x=x_0a$ for some $x_0\in X$ and $a\in A$. Then $\phi_Y(a)=\lim_{n\to \infty } \sum_{i}\Theta_{\eta_i^n,\mu_i^n}$ for some $\eta_i^n$, $\mu_i^n \in Y$. Thus 
$$
T_x=T_{x_0}\phi_Y(a)=\lim_{n\to \infty } \sum_{i}T_{x_0}\Theta_{\eta_i^n,\mu_i^n}=\lim_{n\to \infty } \sum_{i}\Theta_{x_0\otimes \eta_i^n,\mu_i^n}\in \K(Y,X\otimes Y).
$$
As $\phi_Y$ is isometric,  $\phi_Y(\langle x, x\rangle_A)=S^*S$ for some 
$S\in \LL(Y)$ with $\|S\|=\|x\|$, and hence
$$
\|T_x\|^2=\sup_{y\in Y, \|y\|=1} \|\langle y, \phi_Y(\langle x,x\rangle_A)  y\rangle_A\| 
=\sup_{y\in Y, \|y\|=1} \|\langle Sy, S y\rangle_A\|=\|S^2\|=\|x\|^2.
$$
Ad (ii). Let  $x\in X$, $z\in Z$ and consider $T_x \in \K(Y,X\otimes Y)$ and 
$T_z\in \K(Y,Z\otimes Y)$ as in item i). Since
$$
\Theta_{z,x}\otimes 1_Y= T_zT_x^*\in \K(X\otimes Y,Z\otimes Y),
$$
we have $\K(X,Z)\otimes 1_Y \subseteq \K(X\otimes Y,Z\otimes Y)$. To show that mapping 
\eqref{tensoring on the right} is isometric, we first consider the case $Z=X$. Then \eqref{tensoring on the right} 
is a homomorphism of $C^*$-algebras and therefore it suffices to show it is injective. To this end, let $S\in \K(X)$ 
be non-zero. Take $x\in X$ such that $S  x\neq 0$ and  $y\in Y$ such that 
$\phi_Y(\langle  Sx, Sx\rangle_A)y \neq 0$. Then 
$$
\left\langle ( S\otimes 1_Y) x\otimes y ,  Sx\otimes \phi_Y(\langle  Sx, Sx\rangle_A)y \right\rangle = \left\langle \phi_Y(\langle  Sx, Sx\rangle_A)y, \phi_Y(\langle  Sx, Sx\rangle_A)y \right\rangle_A \neq 0,
$$
which implies $ S\otimes 1_Y\neq 0$. Consequently, $\|S\otimes 1_Y\|=\|S\|$. Now getting back to the general case (when $Z$ is arbitrary), for $S\in \K(X,Z)$ we have 
$$
\|S\otimes 1_Y\|^2=\|S^*S\otimes 1_Y\|= \|S^*S\|=\|S\|^2. 
$$
If the homomorphism $\phi_Y:A\to \K(Y)$ is surjective, then it is an isomorphism and  simple computations show that for $x \in X$, $y_1,y_2\in Y$ and $z\in Z$ we have
$$
\Theta_{z\otimes y_1,x\otimes y_2}= \Theta_{z\phi_Y^{-1}(\Theta_{y_1,y_2}),x}\otimes 1_Y.
$$ 
This implies that mapping \eqref{tensoring on the right} is surjective.
\end{proof}

\begin{prop}\label{regular proposition} 
Tensor product of regular $C^*$-correspondences  is a regular $C^*$-correspondence. 
\end{prop}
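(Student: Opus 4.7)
The plan is to derive both parts of regularity of $X \otimes_A Y$ directly from part (ii) of Lemma \ref{lemma on tensoring regular correspondences}, which has already done all the real work. Let $X$ and $Y$ be regular $C^*$-correspondences over $A$. The left action on $X \otimes_A Y$ is $\phi_{X \otimes Y}(a) = \phi_X(a) \otimes 1_Y$, so I need to show this map is injective and lands in $\K(X \otimes Y)$.

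For the compactness condition, regularity of $X$ gives $\phi_X(a) \in \K(X)$ for every $a \in A$. Applying Lemma \ref{lemma on tensoring regular correspondences}(ii) with $Z = X$ (and the roles of $X,Y$ as stated there), the map $S \mapsto S \otimes 1_Y$ sends $\K(X)$ into $\K(X \otimes Y)$. Hence $\phi_X(a) \otimes 1_Y \in \K(X \otimes Y)$, giving $\phi_{X\otimes Y}(A) \subseteq \K(X \otimes Y)$.

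For injectivity of $\phi_{X \otimes Y}$, suppose $\phi_X(a) \otimes 1_Y = 0$. By the same Lemma \ref{lemma on tensoring regular correspondences}(ii), the map $\K(X) \ni S \mapsto S \otimes 1_Y$ is \emph{isometric}, hence injective. Thus $\phi_X(a) = 0$, and then injectivity of $\phi_X$ (regularity of $X$) forces $a = 0$.

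There is no real obstacle here; the only subtlety is making sure the hypotheses of Lemma \ref{lemma on tensoring regular correspondences}(ii) are met — namely that $Y$ is regular, which is exactly what allowed the isometric embedding $\K(X) \hookrightarrow \K(X \otimes Y)$. Regularity of $X$ then enters twice: once to place $\phi_X(a)$ inside $\K(X)$ (so that the embedding applies), and once to conclude injectivity at the level of $A$.
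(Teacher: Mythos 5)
Your proof is correct and follows essentially the same route as the paper: both identify $\phi_{X\otimes Y}=\phi_X\otimes 1_Y$ and then invoke part (ii) of Lemma \ref{lemma on tensoring regular correspondences} to get both the containment in $\K(X\otimes Y)$ and injectivity (via isometry of $S\mapsto S\otimes 1_Y$). Your write-up is simply a more explicit unpacking of the paper's two-line argument.
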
 
\begin{proof} 
If $X$ and $Y$ are $C^*$-correspondences over $A$ then  the left action of $A$ on $X\otimes Y$ is $\phi_{X\otimes Y}=\phi_X\otimes 1_Y$. Hence if $X$ and $Y$ are regular, then $\phi_{X\otimes Y}$ is injective and acts by compacts, by Lemma \ref{lemma on tensoring regular correspondences} part (ii).  
\end{proof} 

Now, let $X$ be  a regular product system  over $P$.  The family 
$$
\K_X:=\{\K(X_{q},X_{p})\}_{p,q\in P}
$$ 
forms  in a natural manner a $C^*$-precategory,  \cite[Definition 2.2]{kwa-doplicher}. We will  describe a right tensoring structure on $\K_X$  by introducing a family of mappings $\iota_{{p},{q}}^{{pr},{qr}}: \K(X_{q},X_{p}) \to \K(X_{qr},X_{pr})$, $p,q,r\in P$, cf. \cite[Example 3.2]{kwa-doplicher}, which extends the standard family of diagonal homomorphisms $\iota_{q}^{qp}$ defined in Subsection \ref{Product systems preliminaries} (when restricted to compact operators). If $q\neq e$ we put 
$$
\iota^{pr,qr}_{p,q}(T)(xy):=(Tx)y,\,\, \,\,\,\,\,\textrm{ where } x\in X_q,\, y\in X_r  \textrm{ and } T\in \K(X_{q},X_{p}).
$$
Note that under the canonical isomorphism $ X_{pq}\cong X_p \otimes_A X_q$ operator  
$\iota^{pr,qr}_{p,q}(T)$ corresponds to $T\otimes 1_{X_r}$. Hence by part (ii) of 
Lemma \ref{lemma on tensoring regular correspondences}, $\iota^{pr,qr}_{p,q}(T)\in \K(X_{qr},X_{pr})$ 
and  $\iota^{pr,qr}_{p,q}$ is isometric. Similarly, in the case $q=e$, 
using \eqref{C-correspondence isomorphism},  the formula 
$$
\iota^{pr,r}_{p,e}(t_x)(y):=xy,\,\, \,\,\,\,\,\textrm{ where }\, y\in X_r  \textrm{ and } t_x\in \K(X_{e},X_{p}),  x\in X_p,
$$
yields a well defined map. By Lemma \ref{lemma on tensoring regular correspondences} part (i), this is an isometry from $\K(X_{e},X_{p})$ into $\K(X_{r},X_{pr})$. Note that  $\iota_{p,p}^{pr,pr}=\iota_{p}^{pr}$. 

\begin{defn}\label{right tensor C-precategory}
The $C^*$-precategory $\K_X:=\{\K(X_{q},X_{p})\}_{p,q\in P}$ equipped with the family of maps 
$\{\iota_{{p},{q}}^{{pr},{qr}}\}_{p,q,r\in P}$  defined above is called 
\emph{a right tensor $C^*$-precategory associated to the regular product system} $X$. 
\end{defn} 

 \begin{rem} $\K_X$ is a $C^*$-precategory in the sense of \cite[Definition 2.2]{kwa-doplicher} whose  
objects are elements of $P$.  One readily sees that the isometric linear maps $\iota_{{p},{q}}^{{pr},{qr}}: \K(X_{q},X_{p}) \to \K(X_{qr},X_{pr})$, $p,q,r\in P$, satisfy 
\begin{equation}\label{star and mulitpilcativity}
 \iota_{{p},{q}}^{{pr},{qr}}(T)^*=\iota_{{q},{p}}^{{qr},{pr}}(T^*),\qquad  \iota_{{p},{q}}^{{pr},{qr}}(T) \iota_{{q},{s}}^{{qr},{sr}}(S)= \iota_{{p},{s}}^{{pr},{sr}}(TS),
 \end{equation}
 \begin{equation}\label{semigroup property}
  \iota_{{pr},{qr}}^{{prs},{qrs}}(\iota_{{p},{q}}^{{pr},{qr}}(T)) = \iota_{{p},{q}}^{{prs},{qrs}}(T),
 \end{equation}
for all $T \in \K(X_{q},X_{p})$, $S \in \K(X_{s},X_{q})$, $p,q,r,s\in P$. Thus, if we adopt the notation
 $$
T\otimes 1_r:=\iota^{pr,qr}_{p,q}(T),\qquad \quad T\in\K(X_{q},X_{p}),\,\, p,q \in P,
 $$
then \eqref{star and mulitpilcativity} means that $\otimes 1_r: \K_X\to \K_X$ is a $C^*$-precategory monomorphism sending $p$ to $pr$,  see \cite[Definition 2.8]{kwa-doplicher}, and \eqref{semigroup property} states that $\otimes 1_r\circ \otimes 1_s=\otimes 1_{rs}$, that is $\{\otimes 1_r\}_{r\in P}$ is a semigroup action on $\K_X$.  In particular, the pair $(\K_X, \{\otimes 1_r\}_{r\in P})$, which is another presentation of $(\K_X,  \{\iota_{{p},{q}}^{{pr},{qr}}\}_{p,q,r\in P})$, is a (strict) right tensor $C^*$-category (cf. e.g., \cite{dr}) 
when each of the algebra $\K(X_p)$, $p\in P$, is unital.
 \end{rem}
The following lemma could be considered a  counterpart of \cite[Proposition 3.14]{kwa-doplicher}. 
 
\begin{lem}\label{going forward lemma} 
Let  $\psi$ be  a  representation of a regular product system $X$ over a semigroup $P$ in a $C^*$-algebra $B$. 
For each $p,q\in P$ we have a  contractive  linear map $\psi_{p,q} : \K(X_q,X_p) \longrightarrow B$  
determined by the formula 
\begin{equation}\label{right tensor representation}
\psi_{p,q}(\Theta_{x,y})=
\psi_p(x)\psi_q(y)^*\,\,\,  \; \text{for}\; x\in X_p,\,\, y \in X_q.
\end{equation}
Mappings  $\{\psi_{p,q}\}_{p,q\in P}$ satisfy
\begin{equation}\label{right tensor representation propert}
\psi_{p,q}(S)\psi_{q,r}(T)=\psi_{p,r} (ST)\,\,\,  \; \text{for}\; S\in \K(X_q,X_p),\,\, T \in \K(X_r,X_q),\,\, p,q,r \in P,
\end{equation}
and are all isometric if  $\psi$ is injective. 
If $\psi$ is Cuntz-Pimsner covariant, then 
\begin{equation}\label{right tensor representation property}
 \psi_{p,q}(S)=\psi_{pr,qr}(\iota_{p,q}^{pr,qr}(S)) \qquad  \textrm{ for all } p, q, r \in P \textrm{  and  } S\in \K(X_q,X_p). 
\end{equation}
\end{lem}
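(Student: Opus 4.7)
My plan is to prove each of the three assertions by reducing to checking them on rank-one generators $\Theta_{x,y}$, where the relations (T2), (T3), and (CP) apply directly, and then extending by linearity and norm continuity. All three parts follow a similar template, with the CP-covariance statement requiring the most care.

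For well-definedness and contractivity of $\psi_{p,q}$, given a finite sum $T = \sum_i \Theta_{x_i,y_i}$ with $x_i \in X_p$, $y_i \in X_q$, I would set $\widetilde T := \sum_i \psi_p(x_i)\psi_q(y_i)^*$. The key identity is $\widetilde T^{\,*}\widetilde T = \psi^{(q)}(T^*T)$, obtained by using (T3) to rewrite $\psi_p(x_i)^*\psi_p(x_j) = \psi_e(\langle x_i,x_j\rangle_p)$ and (T2) to absorb this into the adjacent $\psi_q$. From $\|\widetilde T\|^2 = \|\psi^{(q)}(T^*T)\| \leq \|T\|^2$ one reads off well-definedness, linearity, and contractivity of the assignment $T\mapsto \widetilde T$, which extends by continuity to $\psi_{p,q}$. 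When $\psi$ is injective, $\psi_e$ is faithful and hence $\psi^{(q)}$ is isometric, so the same identity upgrades the inequality to equality.

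The multiplicative property \eqref{right tensor representation propert} I would verify on generators: applying (T3) and then (T2) to $\psi_p(x)\psi_q(y)^*\psi_q(u)\psi_r(v)^*$ yields $\psi_p(x\langle y,u\rangle_q)\psi_r(v)^* = \psi_{p,r}(\Theta_{x\langle y,u\rangle_q,v})$, which matches $\psi_{p,r}(\Theta_{x,y}\Theta_{u,v})$ via the product formula $\Theta_{x,y}\Theta_{u,v} = \Theta_{x\langle y,u\rangle_q,v}$. Bilinearity and continuity close the general case.

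For \eqref{right tensor representation property}, I would once more reduce to generators $\Theta_{x,y}$, concentrating on $q \neq e$ (the case $q=e$ being parallel via the identification $\K(X_e,X_p)\cong X_p$, $x\mapsto t_x$, under which $\iota^{pr,r}_{p,e}(t_x) = T_x$). By Lemma \ref{lemma on tensoring regular correspondences}, under $X_{pr}\cong X_p\otimes X_r$ the operator $\iota^{pr,qr}_{p,q}(\Theta_{x,y}) = \Theta_{x,y}\otimes 1_{X_r}$ factors as $T_xT_y^*$, where $T_y : X_r \to X_{qr}$, $w\mapsto yw$, is compact. Taking a positive bounded approximate unit $(u_\lambda) \subset A$, a direct check gives $\Theta_{x,y}\otimes\phi_r(u_\lambda) = T_xT_y^*(1\otimes\phi_r(u_\lambda))$; since $1\otimes\phi_r(u_\lambda)\to 1$ strongly on $X_{qr}$ (from non-degeneracy of $\phi_r$, which is automatic for regular correspondences in a product system) and $T_xT_y^*$ is compact, this gives the norm convergence $\Theta_{x,y}\otimes\phi_r(u_\lambda) \to \iota^{pr,qr}_{p,q}(\Theta_{x,y})$. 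On the other hand, approximating $\phi_r(u_\lambda)\in \K(X_r)$ by finite sums of rank-one operators and applying $\psi_{pr,qr}$ term-by-term yields $\psi_{pr,qr}(\Theta_{x,y}\otimes\phi_r(u_\lambda)) = \psi_p(x)\psi^{(r)}(\phi_r(u_\lambda))\psi_q(y)^*$; (CP) then replaces $\psi^{(r)}(\phi_r(u_\lambda))$ by $\psi_e(u_\lambda)$, and $\psi_p(x)\psi_e(u_\lambda)\psi_q(y)^* = \psi_p(xu_\lambda)\psi_q(y)^*$ converges in norm to $\psi_{p,q}(\Theta_{x,y})$. Matching both limits through continuity of $\psi_{pr,qr}$ delivers \eqref{right tensor representation property}. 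The main obstacle will be precisely this norm convergence $\Theta_{x,y}\otimes\phi_r(u_\lambda)\to \Theta_{x,y}\otimes 1_{X_r}$: since $\phi_r(u_\lambda)\to 1$ only strictly on $X_r$, one really needs the regularity of $X_r$ via Lemma \ref{lemma on tensoring regular correspondences} to present the operator as $T_xT_y^*$ with $T_y$ compact, so that strict convergence upgrades to norm convergence after composition.
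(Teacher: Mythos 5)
Your treatment of the first two assertions matches what the paper intends: the identity $\widetilde T^*\widetilde T=\psi^{(q)}(T^*T)$ is exactly the computation behind the reference to \cite{KPW} for well-definedness, contractivity and isometry, and the verification of \eqref{right tensor representation propert} on rank-one generators is the paper's argument verbatim. For \eqref{right tensor representation property}, however, you take a genuinely different route. The paper's proof is spatial: it realizes $\psi$ on a Hilbert space $H$ with $C^*(\psi(X))H=H$, uses (CP) to show $H=\psi_{p}(X_p)H$ for every $p$, and then checks the identity by applying both sides to the dense set of vectors $\psi_{qr}(x_0y_0)h$. Your argument stays inside the Hilbert modules: you cut $1_{X_r}$ down to $\phi_r(u_\lambda)$ by an approximate unit, use (CP) to convert $\psi^{(r)}(\phi_r(u_\lambda))$ into $\psi_e(u_\lambda)$, and pass to the limit. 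This buys a representation-free proof (no need to choose a faithful nondegenerate realization of $B$), at the cost of having to justify a norm limit that the paper's density argument sidesteps.

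One step of your limit argument is not quite right as written: the operator $1\otimes\phi_r(u_\lambda)$ is not well defined on the balanced tensor product $X_q\otimes_A X_r$, since $y_0a\otimes w=y_0\otimes aw$ forces any such "second-leg" operator to commute with $\phi_r(A)$, which $\phi_r(u_\lambda)$ need not do; consequently the factorization $\Theta_{x,y}\otimes\phi_r(u_\lambda)=T_xT_y^*(1\otimes\phi_r(u_\lambda))$ and the appeal to strong convergence of $1\otimes\phi_r(u_\lambda)$ do not parse. The repair is easy and actually simplifies your argument: insert $\phi_r(u_\lambda)$ \emph{between} the two compacts from Lemma \ref{lemma on tensoring regular correspondences}, where it acts on $X_r$ and is unambiguous. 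A direct check gives
\begin{equation*}
T_x\,\phi_r(u_\lambda)\,T_y^*=\iota^{pr,qr}_{p,q}(\Theta_{xu_\lambda,\,y}),
\end{equation*}
and since $xu_\lambda\to x$ in the right Hilbert module $X_p$ and $\iota^{pr,qr}_{p,q}$ is isometric, this converges in norm to $\iota^{pr,qr}_{p,q}(\Theta_{x,y})$ with no appeal to strict or strong convergence (and no need to invoke nondegeneracy of $\phi_r$). Approximating $\phi_r(u_\lambda)$ by finite sums $\sum_i\Theta_{w_i,z_i}$ and using $T_x\Theta_{w_i,z_i}T_y^*=\Theta_{xw_i,yz_i}$ then yields $\psi_{pr,qr}\bigl(T_x\phi_r(u_\lambda)T_y^*\bigr)=\psi_p(x)\psi^{(r)}(\phi_r(u_\lambda))\psi_q(y)^*=\psi_p(xu_\lambda)\psi_q(y)^*$ exactly as you intend, and both sides converge to the desired quantities. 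With this correction your proof is complete.
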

\begin{proof}
 It is not completely trivial but quite  well known that  \eqref{right tensor representation} defines a linear contraction which is isometric if $\psi_e$ is injective, see for instance the proof of Lemma 2.2 in \cite{KPW}. 
One readily sees that \eqref{right tensor representation propert} holds for 'rank one' operators 
$S=\Theta_{u,w}$, $T=\Theta_{v,z}$, and thus it  holds in general. To see \eqref{right tensor representation property}, suppose that $\psi$ is  a Cuntz-Pimsner covariant representation on a Hilbert space $H$ and $C^*(\psi(X))H=H$. Since  $\psi:X\to\B(H)$ is a semigroup homomorphism, the essential spaces 
$$
H_p:=\psi^{(p)}(\K(X_p))H=\psi_p(X_p)H
$$
of algebras $\psi^{(p)}(\K(X_p))$, $p\in P$,   form a decreasing family with respect to pre-order 
\eqref{semigroup pre-order}:
$$
p \leq q\,\, \Longrightarrow \,\, H_p \supseteq H_q.
$$
In particular,  $H=H_e=\psi_e(A)H$ and  actually $H=H_p$ for all $p\in P$, since 
$\psi_{e}(A)\subseteq \psi^{(p)}(\K(X_p))$ by  Cuntz-Pimsner covariance.
 Hence the linear span of elements of the form $\psi_{qr}(x_0y_0)h$, $x_0\in X_q$, $y_0\in X_r$, $h\in H$,  
is dense in $H$ and \eqref{right tensor representation property}  follows from the following computation:  
\begin{align*}
\psi_{pr,qr}(\iota_{p,q}^{pr,qr}(\Theta_{x,y}))\psi_{qr}(x_0y_0)&= \psi_{pr}(\iota_{p,q}^{pr,qr}(\Theta_{x,y})x_0y_0)=\psi_{pr}((\Theta_{x,y}x_0)y_0)
\\
&
=\psi_{pr}(x \langle y,x_0\rangle y_0)=\psi_{p}(x)  \psi_{q}(y)^* \psi_{q}(x_0) \psi_{r}(y_0)
\\
&
=\psi_{p,q}(\Theta_{x,y})\psi_{qr}(x_0y_0).
\end{align*}
 \end{proof}


\subsection{Doplicher-Roberts picture of a Cuntz-Pimsner algebra and its reduced version}

Throughout this subsection we assume that $X$ is a regular product system over a semigroup of Ore type, 
see Subsection \ref{Ore subsection}.
For the proof of the main result of this section we need  the following lemma, cf. 
\cite[Proposition 5.10]{F99}.

\begin{lem}\label{going forward lemma2} 
Suppose $\psi$ is a Cuntz-Pimsner covariant representation of  a regular product system $X$ 
over a semigroup $P$ of Ore type. 
\begin{itemize}
\item[i)] For all $x\in X_p$, $y\in X_q$ and $s\geq p,q$ we have
$$
\psi_p(x)^* \psi_q(y)\in \clsp\{ \psi(f)\psi(h)^*:  f\in X_{p^{-1}s}, h\in X_{q^{-1}s}\}.
$$ 
\item[ii)] We have the equality
\begin{align*}
&\clsp\{\psi(x)\psi^*(y): x,y\in X,\,\,  [d(x),d(y)]=[p,q]\}
\\
&=\clsp\{\psi(x)\psi^*(y): x\in X_{pr},y\in X_{qr},\,\,  r \in P\}.
\end{align*}
\item[iii)] $C^*(\psi(X))=\clsp\{ \psi(x)\psi(y)^*:  x,y\in X\}$. Furthermore, using the mappings defined in Lemma \ref{going forward lemma}  one can  arrange a dense subspace of $C^*(\psi(X))$ consisting of  
elements of the form
\begin{equation}\label{general form3} 
\psi^{(q)}(S_{q}) + \sum_{p\in F} \psi_{p,q}(S_{p,q})
\end{equation}
where $q\in P$ and $F\subseteq P$ is a finite set such that $q \nsim_R p$ for all $p\in F$, cf. 
Remark \ref{kernel of Ore}. 
\end{itemize}
\end{lem}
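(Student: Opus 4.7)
The plan is to work through the three parts in order: (i) is a Cuntz-Pimsner-covariance plus approximate-unit computation, (ii) is a symmetric application of the same trick on both factors of $\psi(x)\psi(y)^*$, and (iii) assembles the pieces using directedness of $P$ and the right-tensoring maps $\iota_{p,q}^{pr,qr}$ to consolidate everything into a single "column index" $q$.

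For part (i), I would fix $s\ge p,q$ with $r_1:=p^{-1}s$, $r_2:=q^{-1}s$, and let $(a_\lambda)$ be an approximate unit of $A$. Norm continuity of right multiplication and $\psi_q(ya_\lambda)=\psi_q(y)\psi_e(a_\lambda)$ give $\psi_p(x)^*\psi_q(y)=\lim_\lambda\psi_p(x)^*\psi_q(y)\psi_e(a_\lambda)$. Cuntz-Pimsner covariance at level $r_2$ rewrites $\psi_e(a_\lambda)=\psi^{(r_2)}(\phi_{r_2}(a_\lambda))$, and since $\phi_{r_2}(a_\lambda)\in\K(X_{r_2})$ is a norm limit of finite sums $\sum_i\Theta_{h_i,k_i}$ with $h_i,k_i\in X_{r_2}$, I obtain
\[
\psi_q(y)\psi_e(a_\lambda)=\lim\sum_i\psi_s(yh_i)\psi_{r_2}(k_i)^*.
\]
Now, using the factorization $X_s\cong X_p\otimes_A X_{r_1}$ to write each $yh_i$ as a norm limit of $\sum_j x_j\otimes f_j$ with $x_j\in X_p$, $f_j\in X_{r_1}$, together with $\psi_p(x)^*\psi_p(x_j)=\psi_e(\langle x,x_j\rangle_p)$ and $\psi_e(a)\psi_{r_1}(f)=\psi_{r_1}(\phi_{r_1}(a)f)$, I conclude $\psi_p(x)^*\psi_s(yh_i)\in\psi_{r_1}(X_{r_1})$, and hence the desired conclusion after passing to the norm limit.

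For part (ii), the inclusion $\supseteq$ is immediate since $(pr,qr)\sim(p,q)$ via the choice $p=e$, $q=r$ in \eqref{equivalence ore relation}. For $\subseteq$, given $x\in X_{p'}$, $y\in X_{q'}$ with $(p',q')\sim (p,q)$, fix $t_1,t_2\in P$ with $p't_1=pt_2$ and $q't_1=qt_2$. Applying the approximate-unit trick of (i) on both factors, I write $\psi(x)=\lim\sum_i\psi_{p't_1}(xu_i)\psi_{t_1}(v_i)^*$ and, dually, $\psi(y)^*=\lim\sum_j\psi_{t_1}(\tilde v_j)\psi_{q't_1}(y\tilde u_j)^*$. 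Multiplying and using $\psi_{t_1}(v_i)^*\psi_{t_1}(\tilde v_j)=\psi_e(\langle v_i,\tilde v_j\rangle_{t_1})$ to absorb the middle factor into $X_{p't_1}=X_{pt_2}$ by the right action, I realize $\psi(x)\psi(y)^*$ as a limit of elements of $\psi_{pt_2,qt_2}(\K(X_{qt_2},X_{pt_2}))$, which is the $r=t_2$ case of the target span.

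For part (iii), set $V:=\clsp\{\psi(x)\psi(y)^*:x,y\in X\}$. Then $V$ is stable under the adjoint trivially and under products by (i) applied to $\psi(y_1)^*\psi(x_2)$ in $\psi(x_1)\psi(y_1)^*\psi(x_2)\psi(y_2)^*$; moreover, for any $p\in P$, $\psi_p(x)=\lim\psi_p(x)\psi_e(a_\lambda)$, and writing $\psi_e(a_\lambda)=\psi^{(p')}(\phi_{p'}(a_\lambda))$ for some fixed $p'\in P\setminus\{e\}$ exhibits this limit inside $V$. Hence $V=C^*(\psi(X))$. To extract the prescribed form, approximate a given element by a finite sum $\sum_i\psi(x_i)\psi(y_i)^*$ with $x_i\in X_{p_i}$, $y_i\in X_{q_i}$; by (ii), each summand is approximated by an element of $\psi_{p_ir_i,q_ir_i}(\K(X_{q_ir_i},X_{p_ir_i}))$. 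Directedness of $P$ yields $s\ge q_ir_i$ for all $i$; writing $s=q_ir_iu_i$ and invoking \eqref{right tensor representation property}, each summand is promoted to $\psi_{p'_i,s}(T_i)$ with $p'_i:=p_ir_iu_i$ and $T_i\in\K(X_s,X_{p'_i})$. Setting $q:=s$, I then split the indices: those $i$ with $p'_i\sim_R q$ become diagonal after a second use of directedness (raising $q$ to a common level at which $p'_iv=qv$ for every such $i$), contributing to the single term $\psi^{(q)}(S_q)$, while the remaining $i$, for which $p'_i\nsim_R q$, form the finite set $F$; repeated $p'_i$ are combined by linearity of $\psi_{p,q}$.

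The main obstacle is the bookkeeping at the end of (iii): after aligning right-indices to a common $q$, one must simultaneously isolate the $\sim_R$-diagonal indices and use a further directedness argument to bring any residual $\sim_R$-equivalent pairs to a common level $qv$, all while keeping the $\iota_{p,q}^{pr,qr}$ maps coherent. Parts (i) and (ii) are technical but standard once the correct combination of approximate units and Cuntz-Pimsner covariance is set up.
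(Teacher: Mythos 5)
Your proof is correct and follows essentially the same route as the paper's: pass to a common upper bound $s\ge p,q$, approximate the resulting compact operator on $X_s$ by finite\nobreakdash-rank ones to prove (i) and (ii), and in (iii) consolidate the column index at a common upper bound by directedness before splitting off the $\sim_R$\nobreakdash-diagonal part with a second application of directedness. The only difference is cosmetic: where you insert an approximate unit and invoke Cuntz--Pimsner covariance $\psi_e(a)=\psi^{(r)}(\phi_r(a))$, the paper factors $x=Sx'$ with $S\in\K(X_p)$ and uses the compatibility $\psi_{p,q}=\psi_{pr,qr}\circ\iota_{p,q}^{pr,qr}$ from Lemma \ref{going forward lemma}; both devices accomplish the same reduction.
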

\begin{proof}
Ad (i). Write $x=Sx'$ with $S\in \K(X_{p})$ and  $x'\in X_{p}$,  and similarly $y=Ty'$ with $T\in \K(X_{q})$,  
$y'\in X_{q}$. Then using \eqref{right tensor representation}, \eqref{right tensor representation property} 
and \eqref{star and mulitpilcativity} we get
$$
 \psi_{p}(x)^*\psi_{q}(y)= \psi_{p}(x')^*\psi^{(p)}(S^*) \psi^{(q)}(T)\psi_q(y') = \psi_{p}(x')^*\psi^{(s)}(\iota_{p}^{s}(S^*) \iota_p^s(T))\psi_q(y').
$$  
Since $\iota_{p}^{s}(S^*) \iota_p^s(T) \in \K(X_s)$ we may approximate $\psi^{(s)}(\iota_{p}^{s}(S^*) \iota_p^s(T))$  with finite sums of operators of the form $\psi_{s}(f'f)\psi_{s
}(h'h)^*$, where $f'\in X_p$, $f\in X_{p^{-1}s}$ and   $h'\in X_q$, $h\in X_{q^{-1}s}$. Hence $\psi_{p}(x)^*\psi_{q}(y)$ can be approximated by finite sums of elements of the form 
$$
\psi_{p}(x')^*\psi_{s}(f'f)\psi_{s
}(h'h)^*\psi_q(y')=\psi_{p^{-1}s}(\langle x', f'\rangle_p f) \psi_{q^{-1}s} (\langle y', h'\rangle h)^*.
$$
This proves claim (i).
\\
Ad (ii). Clearly,  $\clsp\{\psi(x)\psi^*(y): x,y\in X,\,\,  [d(x),d(y)]=[p,q]\}$ contains $\clsp\{\psi(x)\psi^*(y): x\in X_{pr},y\in X_{qr},\,\,  r \in P\}$. To see the converse inclusion, we use the mappings introduced in Lemma \ref{going forward lemma} and assume that  $[p',q']=[p,q]$, that is $p'r'=pr$ and $q'r'=qr$ for some $r,r'\in P$. Then  by \eqref{right tensor representation property} for $T\in \K(X_{q'},X_{p'})$ we have
$$
\psi_{p',q'}(T)=\psi_{p'r',q'r'}(\iota_{p',q'}^{p'r',q'r'}(T))=\psi_{pr,qr}(\iota_{p',q'}^{p'r',q'r'}(T))\in \clsp\{\psi(x)\psi^*(y): x\in X_{pr},y\in X_{qr}\},
$$
which proves our claim.
\\
Ad (iii). Part (i) implies that $C^*(\psi(X))$ is the closure of elements of the form 
\begin{equation}\label{general form}
\sum_{i=1}^n \psi_{p_i}(x_i)\psi_{q_i}(y_i)^*,
\end{equation}
 where $p_i,q_i\in P$, $x_i \in X_{p_i}$, $y_i\in X_{q_i}$, $i=1,...,n $. Moreover, taking any $q_0\in P$ that dominates all $q_i$, $i=1,...,n$, and   writing $y_i=y_i'a_i$ with $y_i'\in X_{q_i}$, $a_i\in A$,  we get 
 $$
 \psi_{p_i}(x_i)\psi_{q_i}(y_i)^*= \psi_{p_i}(x_i)\psi^{(q_i^{-1}q_0)}(\phi_{q_i^{-1}q_0}(a_i^*))\psi_{q_i}(y_i')^*, \qquad i=1,...,n.
 $$
 Approximating $\psi^{(q_i^{-1}q_0)}(\phi_{q_i^{-1}q_0}(a_i^*))$ by finite sums of elements of the form  $\psi_{q_i^{-1}q_0}(u_i)\psi_{q_i^{-1}q_0}(v_i)^*$ we see that $\psi_{p_i}(x_i)\psi_{q_i}(y_i)^*$ can be  approximated by finite sums of elements of the form
 $$
 \psi_{p_i}(x_i)\psi_{q_i^{-1}q_0}(u_i)\psi_{q_i^{-1}q_0}(v_i)^*\psi_{q_i}(y_i')^*=
\psi_{p_iq_i^{-1}q_0}(x_iu_i) \psi_{q_0}(y_i'v_i)^*.
 $$ 
Thus  we see that the element \eqref{general form} can be presented in the form 
\begin{equation}\label{general form2}
\sum_{p\in F'} \psi_{p,q_0}(S_{p,q_0 })
\end{equation}
where $F'=\{p_iq_i^{-1}q_0: i=1,...,n\}\subseteq P$ is a finite set. Let $F_0=\{p\in F': q_0 \sim_R p\}$ and for each $p \in F_0$ choose $r_p\in P$ such that $pr_p=q_0r_p$. Let $r\in P$ be such that  $r\geq r_p$ for all 
$p \in F_0$, and put 
$$
q:=q_0r\quad \textrm{ and } \quad F:=\{pr: p \in F'\setminus F_0\}.
$$
Then  $pr=q$ for all $p \in F_0$, and $p\nsim_R q$ for all $p \in F$. By \eqref{right tensor representation propert}  we have $\psi_{p,q_0}(S_{p,q_0 })\in \psi_{pr,q_0r}(\K(X_{q_0r}, X_{pr}))=\psi_{pr,q}(\K(X_{q}, X_{pr}))$ and hence the element \eqref{general form2} can be presented in the form \eqref{general form3}.
\end{proof}

Now, we are ready to prove the main theorem of this section. It gives a direct construction 
of the Cuntz-Pimsner algebra $\OO_X$ of a regular product system $X$ as the full cross-sectional $C^*$-algebra 
of a suitable Fell bundle corresponding to the limits of directed systems of the compact operators arising 
from $X$. 


\begin{thm}\label{structure theorem} 
Let $X$ be a regular product system over a semigroup $P$ of Ore type and let  $G(P)$ be 
the enveloping group of $P$.  For each $[p,q]\in G(P)$ we define 
$$
B_{[p,q]}:=\underrightarrow{\,\lim\,\,} \K(X_{qr},X_{pr})
$$
to be the Banach space direct limit of the directed system $\left(\{\K(X_{qr},X_{pr})\}_{r\in P},\{\iota_{pr,qr}^{ps,qs}\}_{r,s\in P \atop r\leq s}\right)$. The family $\B=\{B_t\}_{t\in G(P)}$ is in a natural 
manner equipped with the structure of a Fell bundle over $G(P)$ and we have a canonical isomorphism
$$  
\OO_X\cong C^*(\{B_g\}_{g\in G(P)})
$$
from the Cuntz-Pimsner algebra $\OO_X$ onto the full cross-sectional $C^*$-algebra $C^*(\{B_g\}_{g\in G(P)})$.
In particular, 
\begin{itemize}
 \item[i)] the universal representation $j_X:X\to \OO_X$ is injective, 
\item[ii)] $\OO_X$ has a  natural grading $\{(\OO_X)_{g}\}_{g\in G(P)}$ over $G(P)$, such that 
\begin{equation}\label{Fell bundle fibres}
(\OO_X)_g=\clsp\{ j_X(x)j_X(y)^*:  x,y\in X, \,\, [d(x),d(y)]=g\}. 
\end{equation}
\item[iii)] for every injective representation $\psi$ of $X$, the integrated representation $\Pi_\psi$ of $\OO_X$ is isometric on each Banach space $(\OO_X)_g$, $g\in G(P)$, and thus it restricts to an isomorphism of the core $C^*$-subalgebra of $\OO_X$, namely
$$
(\OO_X)_e=\clsp\{ j_X(x)j_X(y)^*:  x,y\in X, \,\,  d(x)=d(y)\}.
$$ 
\end{itemize}
\end{thm}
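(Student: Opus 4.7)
The plan is to realize $\B=\{B_g\}_{g\in G(P)}$ explicitly from the right tensor $C^*$-precategory $\K_X$, equip it with a Fell bundle structure, and then produce mutually inverse $*$-homomorphisms between $\OO_X$ and $C^*(\B)$. For fixed $(p,q)$ the system $\{\K(X_{qr},X_{pr})\}_{r\in P}$ is genuinely directed because $P$ is directed (Ore-type), and the connecting maps $\iota_{pr,qr}^{ps,qs}$ are isometries satisfying the cocycle condition \eqref{semigroup property}. Independence of representative is forced by \eqref{equivalence ore relation}: if $pr=p'r'$ and $qr=q'r'$, the literal equality $\K(X_{qr},X_{pr})=\K(X_{q'r'},X_{p'r'})$ makes the two systems cofinal. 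The Fell bundle structure I would define by choosing, for $\xi\in B_{[p_1,p_2]}$ and $\eta\in B_{[q_1,q_2]}$, any $s\geq p_2,q_1$, lifting $\xi$ via $\iota_{p_1,p_2}^{p_1(p_2^{-1}s),s}$ and $\eta$ via $\iota_{q_1,q_2}^{s,q_2(q_1^{-1}s)}$ to operators sharing the middle object $X_s$, and composing; the involution comes from adjoints. Relations \eqref{star and mulitpilcativity} and \eqref{semigroup property} ensure that these operations descend to limits, are independent of $s$ and of representatives, and reproduce the group law \eqref{group action}, making $\B$ a Fell bundle over $G(P)$.

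Next I would construct the two $*$-homomorphisms. For $\Psi\colon\OO_X\to C^*(\B)$, define $\widetilde\psi\colon X\to C^*(\B)$ by $\widetilde\psi_p(x)=[t_x]\in B_{[p,e]}$, where $t_x\in\K(X_e,X_p)$ is the operator $a\mapsto xa$ from \eqref{C-correspondence isomorphism}. The Toeplitz relations are immediate from the product in $\B$, and Cuntz-Pimsner covariance reduces to the identity $\iota^p_e(t_a)=\phi_p(a)$, which forces $[\phi_p(a)]=[t_a]$ in $B_e$; universality of $\OO_X$ then yields $\Psi$. Conversely, Lemma \ref{going forward lemma} shows that $j_X$ induces contractions $(j_X)_{pr,qr}\colon\K(X_{qr},X_{pr})\to\OO_X$ which, by \eqref{right tensor representation property}, are compatible with the directed system, hence descend to contractions $B_{[p,q]}\to\OO_X$; these assemble into a $*$-representation of $\bigoplus_g B_g$ that extends by universality of $C^*(\B)$ to $\Phi\colon C^*(\B)\to\OO_X$. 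Surjectivity of $\Phi$ is the content of Lemma \ref{going forward lemma2}(iii), and mutual invertibility of $\Phi$ and $\Psi$ is checked on the generators $\widetilde\psi(X)$ and $j_X(X)$.

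Parts (i)--(iii) then drop out. For (i), $\widetilde\psi_e\colon A\to B_e$ factors as the canonical embedding $A\cong\K(X_e)\hookrightarrow B_e$, which is isometric since injections of $C^*$-algebras into a direct limit are; pulled back through $\OO_X\cong C^*(\B)$ this says $j_X|_{X_e}$ is injective, and isometricity on every $X_p$ then follows from \eqref{right tensor representation}. Part (ii) is the canonical Fell bundle grading of $C^*(\B)$ transported to $\OO_X$, with the concrete description \eqref{Fell bundle fibres} supplied by Lemma \ref{going forward lemma2}(ii). For (iii), injectivity of $\psi$ makes each $\psi_{p,q}$ isometric (Lemma \ref{going forward lemma}), and \eqref{right tensor representation property} makes these maps compatible with the direct system, so they induce isometries on each $B_g\cong(\OO_X)_g$ that agree with $\Pi_\psi$ restricted to that fiber.

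The main obstacle will be the first step: verifying well-definedness, associativity and the $C^*$-identity of the Fell bundle multiplication on equivalence classes. This is a careful but essentially routine chase through \eqref{equivalence ore relation}, \eqref{group action}, \eqref{star and mulitpilcativity}, and \eqref{semigroup property}, where left cancellativity of $P$ and directedness of $(P,\leq)$ do the real work. A secondary subtlety is confirming that the $C^*$-norm on $\bigoplus_g B_g$ inherited via $\Phi$ from $\OO_X$ actually coincides with the universal one, so that $\Phi$ is bijective and not merely surjective; this reduces to showing $B_e$ injects isometrically into $\OO_X$, which follows once one knows that an inductive limit of isometric embeddings of $C^*$-algebras is isometric on each level.
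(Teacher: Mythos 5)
Your proposal is correct and follows essentially the same route as the paper: the same Fell bundle $\B$ built from the right tensor $C^*$-precategory with isometric connecting maps, the same Cuntz--Pimsner covariant representation $x\mapsto\varphi_{p,e}(t_x)$ verified via $\iota^p_e(t_a)=\phi_p(a)$, and the same use of Lemmas \ref{going forward lemma} and \ref{going forward lemma2} to identify the fibers and obtain parts (i)--(iii). The only (harmless) difference is that you obtain the inverse globally from the universal property of $C^*(\B)$, whereas the paper inverts $\Pi_\Psi$ fiberwise via the universal property of the inductive limits and then appeals to linear independence of the fibers and maximality of the norm; note also that your flagged ``secondary subtlety'' about the inherited norm evaporates once $\Phi$ and $\Psi$ are mutually inverse $*$-homomorphisms on dense subalgebras.
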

\begin{proof} 
As the direct limit $\underrightarrow{\,\lim\,\,} \K(X_{qr},X_{pr})$ depends only on 'sufficiently large $r$', 
it follows immediately from \eqref{equivalence ore relation} that the limit does not depend on the choice 
of a representative of $[p,q]$ and thus $B_{[p,q]}$ is well defined. 
Let $\varphi_{p,q}:\K(X_{q},X_{p}) \to B_{[p,q]}$ denote the natural embedding of $\K(X_{q},X_{p})$ into  $B_{[p,q]}$. It is isometric because all the connecting maps $\iota_{{pr},{qr}}^{{ps},{qs}}$, $r\leq s$, are. 
Using the (inductive) properties of the mappings $\varphi_{p,q}$ and (right tensoring) properties \eqref{star and mulitpilcativity},  \eqref{semigroup property} of the  mappings  $\iota_{p,q}^{pr,qr}$, one sees that the formula
$$
\varphi_{p_1,p_2}(S) \circ \varphi_{q_1,q_2}(T):=  \varphi_{p_1 (p_2^{-1}s),q_2(q_1^{-1}s)}\Big(\iota_{p_1,p_2}^{p_1 (p_2^{-1}s),s}(S) 
\iota_{q_1,q_2}^{s,q_2(q_1^{-1}s)}(T)\Big),
$$ 
where $s\geq p_2, q_1$,  $S\in \K(X_{p_2},X_{p_1})$, $T\in \K(X_{q_2},X_{q_1})$, yields  
well defined bilinear  maps  
$$ 
\circ: B_{[p_1,p_2]} \times B_{[q_1,q_2]} \to B_{[p_1,p_2]\circ [q_1,q_2]}. 
$$ 
These maps establish an associative multiplication $\circ$ on $\{B_t\}_{t\in G(P)}$, satisfying 
$$
\|a\circ b\|\leq \|a\| \cdot \|b\|.
$$ 
Hence $\{B_t\}_{t\in G(P)}$ becomes a Banach algebraic bundle, cf. e.g. 
\cite[Definition 2.2. parts (i)--(iv)]{exel2}. Similarly,  formula
$$
\varphi_{p_1,p_2}(S)^*:= \varphi_{p_2,p_1}(S^*), \qquad S \in \K(X_{p_2},X_{p_1}), 
$$ 
defines  a '$*$' operation that satisfies  axioms \cite[Definition 2.2. parts (v)--(xi)]{exel2} and hence we 
get a Fell bundle structure on   $\{B_g\}_{g\in G(P)}$ (we omit straightforward but tedious verification 
of the details). 

Now, we view $C^*(\{B_g\}_{g\in G(P)})$ as a maximal $C^*$-completion of the direct sum 
$\bigoplus_{g \in G(P)} B_g$. 
Using the maps   \eqref{C-correspondence isomorphism},   we define mappings 
$$ 
\Psi:X=\bigsqcup_{p\in P}X_{p} \to C^*(\{B_g\}_{g\in G(P)})
$$ by 
\begin{equation}\label{representation from fibers}
X_p \ni x 
\longmapsto \varphi_{p,e}(t_x), \qquad p\in P.
\end{equation}
Since \eqref{C-correspondence isomorphism}  is  an isomorphism of $C^*$-correspondences, it follows  that $\Psi$  restricted to each summand $X_p$ is an injective representation of a $C^*$-correpondence. Moreover,  for $x\in X_p$, $y\in X_q$ we have $t_{xy}=i^{pq,q}_{p,e}(t_{x})t_y$ and thus
$$
\Psi(x)\Psi(y)=\varphi_{p,e}(t_{x})\circ \varphi_{q,e}(t_y)=\varphi_{pq,e}(i^{pq,q}_{p,e}(t_{x})t_y) =\varphi_{pq,e}(t_{xy})=\Psi(xy).
$$
Hence $\Psi$ is a faithful representation of the product system $X$ in $C^*(\{B_t\}_{t\in G(P)})$.  We recall that $\iota_{e,e}^{p,p}(t_a)=\iota_{e}^{p}(a)=\phi_p(a)$ and hence
$$
\Psi(a)=\varphi_{e,e}(t_a)=\varphi_{p,p}(\iota_{e,e}^{p,p}(t_a))=\varphi_{p,p}(\phi_p(a))=\Psi (\phi_p(a)), \qquad a\in A,\,\, p \in P,
$$
that is $\Psi$ is Cuntz-Pimsner covariant. Since $\Psi$ is injective, so is $j_X$ and claim (i) holds. 
Now, considering the integrated representation $\Pi_\Psi:\OO_X\to C^*(\{B_g\}_{g\in G(P)})$, 
for $x\in X_p$, $y\in X_q$ we have
\begin{equation}\label{extending to compacts}
\Pi_\Psi(j_X(x) j_X(y)^*)=\Psi(x)\circ \Psi(y)^*=\varphi_{p,e}(t_x)\circ \varphi_{e,q}(t_y^*)=\varphi_{p,q}(t_x t_y^*)=\varphi_{p,q}(\Theta_{x,y}). 
\end{equation}
It follows that $\Pi_\Psi$ maps 
$$
(\OO_X)_{[p,q]}:=\clsp\{ j_X(x)j_X(y)^*: x\in X_{pr},y\in X_{qr},\,\,  r \in P\}$$ 
onto $B_{[p,q]}$. Putting $g=[p,q]$ and using Lemma \ref{going forward lemma2} part (iii), 
we see that  $(\OO_X)_g$ is  given by \eqref{Fell bundle fibres}.
We claim that $\Pi_\Psi$ is injective on $(\OO_X)_g$. To see this, let $j_{p,q}$ denote the mappings from Lemma \ref{going forward lemma} associated to the universal representation $j_X$ and note that we have
$$
j_{ps,qs}\circ \iota_{pr,qr}^{ps,qs}=j_{pr,qr} \qquad \textrm{for } r\leq s
$$
by \eqref{right tensor representation property}. By the universal property of  inductive limits, 
there is a mapping 
$$
B_{[p,q]}\ni  \phi_{pr,qr}(T)\mapsto  j_{pr,qr}(T) \in (\OO_X)_{[p,q]},
$$
which is inverse to $\Pi_\Psi|_{[p,q]}$. Accordingly, $\Pi_\Psi$ is an epimorphism injective on each $(\OO_X)_g$. Since the spaces $B_g$, $g\in G(P)$, are linearly independent, so are $(\OO_X)_g$, $g\in G(P)$.  Consequently, in view of Lemma \ref{going forward lemma2} we have 
$$
\OO_X=\overline{ \bigoplus_{g\in G(P)} (\OO_X)_g}
$$
and claim (ii) follows. In particular, $\Pi_\Psi: \bigoplus_{g\in G(P)} (\OO_X)_g\to \bigoplus_{g\in G(P)} B_g$ is an isomorphism and as $C^*(\{B_g\}_{g\in G(P)})$ is the closure of $\bigoplus_{g \in G(P)} B_g$ in a maximal $C^*$-norm we see that $\Pi_\Psi$  actually yields the desired isomorphism $\OO_X\cong C^*(\{B_g\}_{g\in G(P)})$.

For the proof of part (iii), notice that we have just showed that $(\OO_X)_{[p,q]}$ is the closure of the 
increasing union $\bigcup_{r\in P} j_{pr,qr}(\K(X_{qr},X_{pr}))$, where 
$j_{pr,qr}:\K(X_{qr},X_{pr})\to (\OO_X)_{[p,q]}$ are  isometric maps. Similarly, if    $\psi$ is an injective 
covariant representation of $X$, then $\Pi_\psi((\OO_X)_{[p,q]})$ is the closure of the increasing 
union $\bigcup_{r\in P} \psi_{pr,qr}(\K(X_{qr},X_{pr}))$, and by Lemma \ref{going forward lemma}  
mappings $\psi_{pr,qr}:\K(X_q,X_p) \to \Pi_\Psi((\OO_X)_{[p,q]})$ are isometric. Since  
$\Pi_\psi\circ j_{pr,qr}=\psi_{pr,qr}$, $p,q,r\in P$,  it follows that surjection 
$\Pi_\psi:(\OO_X)_{[p,q]}\to \psi((\OO_X)_{[p,q]})$ is an isometry, since  it is isometric 
on a dense subset.
\end{proof}

\begin{rem}\label{remarkable remarks}
Theorem \ref{structure theorem} has a number of remarkable consequences. 

\smallskip\noindent (i)
The Cuntz-Pimsner algebra $\OO_X$  can be constructed  in a natural manner as the full cross-sectional 
algebra $C^*(\B)$ of the Fell bundle  $\B=\{B_t\}_{t\in G(P)}$. Thus it is justified to call the reduced 
cross-sectional algebra 
  $$
  \OO_X^r:=C^*_r(\B)
  $$ 
  the \emph{reduced Cuntz-Pimsner algebra of $X$}. In particular, $\OO_X^r$ is the $C^*$-algebra 
$C^*(j_X^r(X))$ generated by an injective Cuntz-Pimnser representation $j_X^r:X \to \OO_X^r=C^*_r(\{B_g\}_{g\in G(P)})$ acting according to \eqref{representation from fibers}. 
When  $P$ is Ore and $(G(P),\iota(P))$ is a quasi-lattice ordered group then  $\OO_X^r$ coincides with   
the co-universal $C^*$-algebra $\NO_X^r$ introduced and investigated in  \cite{CLSV}.

\smallskip\noindent (ii)
Our construction yields a faithful Cuntz-Pimsner representation of $X$ and thus the Cuntz-Pimsner 
algebra $\OO_X$ does not degenerate (it contains an isomorphic copy of $X$). This addresses the problem 
raised already by Fowler in \cite[Remark 2.10]{F99}. Until now, this problem was solved positively in the case $P$ is Ore and $(G(P),\iota(P))$ is a quasi-lattice ordered group, in which case $\OO_X$ coincides with the Cuntz-Nica-Pimsner algebra $\NO_X$.

\smallskip\noindent (iii)
When $P$ is Ore and $(G(P),\iota(P))$ is a quasi-lattice ordered group then part (iii) of Theorem 
\ref{structure theorem} coincides with \cite[Theorem 3.8]{CLSV}. In general, this result leads to (or actually could be considered as a version of) the so-called gauge invariant uniqueness theorem, cf. Proposition \ref{thm:projective property} below.
\end{rem}


\section{Dual objects}\label{Dual objects}

In essence, the dual objects we investigate are relations. However, we would like  to think of them in dynamical terms and therefore we will consider relations as multivalued maps, see subsection \ref{multi maps section} 
for the relevant terminology and conventions. 

 
\subsection{Multivalued maps dual  to homomorphisms of $C^*$-algebras}

Let $A$ be a $C^*$-algebra. We denote by $\simeq$ the unitary equivalence relation between 
representations of $A$, and by $[\pi]$ the corresponding equivalence class of $\pi:A\to \B(H)$. 
Spectrum $\SA=\{[\pi]: \pi\in\Irr(A)\}$ consists of the equivalence classes of all irreducible representations of 
$A$, equipped with the Jacobson topology. The relation $\leq $ of being a subrepresentation factors through $\simeq$ to a relation $\preceq$ on $\SA$. Namely, if $\pi:A\to \B(H_\pi)$ and $\rho:A\to \B(H_\rho)$ 
are representations of $A$, then
$$
 [\pi] \preceq [\rho] \Longleftrightarrow \exists\text{ isometry }U:H_\pi\to H_\rho  \,\, 
\text{ s. t. }(\forall{a\in A})\,\, \pi(a)=U^*\rho(a)U.
$$
Let $\al:A\to B$ be a homomorphism between two $C^*$-algebras. It is useful to think of the dual map 
we aim to define as a factorization of a multivalued map  $\sal_0:\Irr(B)\to\Irr(A)$ given by
\begin{equation}\label{pre dual}
\sal_0(\pi_B)=\{\pi_A\in \Irr(A): \pi_A \leq \pi_B\circ \al\}.
\end{equation} 
 The set  $[\sal_0(\pi_B)]:=\{[\pi_A]\in \SA: \pi_A \leq \pi_B\circ \al\}$ does not depend on the choice of a representative of the class $[\pi_B]$ and thus the following definition make sense.

\begin{defn} The \emph{dual  map} to a homomorphism $\al:A\to B$ is a multivalued map  
$\sal:\SB\to \SA$ given by the formula
\begin{align*}
\sal([\pi_B]):= & \{[\pi_A]\in \SA: [\pi_A] \preceq [\pi_B\circ \al]\} \\ 
= & \{[\pi_A]\in \SA: \pi_A \leq \pi_B\circ \al\}.
\end{align*}
\end{defn}

The range of $\sal$ behaves exactly as one would expect. But for non-liminal  $B$  the map $\sal$, and in 
particular its domain, has to be treated with care. Let us explain it with  help of  the following proposition 
and an example.

 \begin{prop}\label{something positive for Wojtek} 
For every homomorphism $\al:A\to B$ between 
two $C^*$-algebras, its image 
 $$
 \sal(\SB)=\{[\pi_A] \in \SA: \ker\pi_A \supseteq  \ker \al \}
 $$   is a closed subset of $\SA$. Its domain $D(\sal)$ is contained in an open subset 
$\{[\pi_B]\in \SB: \ker\pi_B \nsupseteq B\al(A)B\}$ of $\SB$.
 Moreover, if $B$ is liminal, then 
 $$
D(\sal)=\{[\pi_B]\in \SB: \ker\pi_B \nsupseteq B\al(A)B\}
 $$
 and $\sal:\SB\to \SA$ is continuous.
 \end{prop}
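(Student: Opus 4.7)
The plan is to establish the four assertions in order: the image formula for $\sal$, its closedness, the inclusion for $D(\sal)$ together with openness of the ambient set, and finally (under liminality) the reverse inclusion and continuity.

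For the image, one direction is immediate: if $[\pi_A]\in\sal([\pi_B])$ then $\pi_A\leq\pi_B\circ\al$, whence $\ker\pi_A\supseteq\ker(\pi_B\circ\al)\supseteq\ker\al$. For the converse, given $[\pi_A]$ with $\ker\pi_A\supseteq\ker\al$, factor $\pi_A$ through $A/\ker\al\cong\al(A)\subseteq B$ as an irreducible representation $\bar\pi_A$; pick a pure state $\phi$ on $\al(A)$ whose GNS representation is $\bar\pi_A$; extend $\phi$ to a pure state $\psi$ on $B$ (pure-state extensions exist by Hahn--Banach together with Krein--Milman applied to the weak-$*$ compact convex set of norm-preserving extensions of $\phi$); and let $\pi_B$ be the GNS of $\psi$. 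A standard GNS computation shows that the cyclic subrepresentation of $\pi_B\circ\al$ generated by the GNS cyclic vector is unitarily equivalent to $\pi_A$, so $\pi_A\leq\pi_B\circ\al$. Closedness of $\sal(\SB)$ is then immediate: it coincides with the hull of the ideal $\ker\al$, which is closed by the very definition of the Jacobson topology on $\SA$.

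For the second claim, if $[\pi_B]\in D(\sal)$ then $\pi_B\circ\al$ admits a nonzero subrepresentation, so $\al(A)\nsubseteq\ker\pi_B$, and hence $B\al(A)B\nsubseteq\ker\pi_B$ because $\ker\pi_B$ is a two-sided ideal. Openness of $\{[\pi_B]:\ker\pi_B\nsupseteq B\al(A)B\}$ is dual to the preceding closedness observation: it is the complement in $\SB$ of the hull of the ideal $B\al(A)B$.

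Assume now that $B$ is liminal, so $\pi_B(B)=\K(H_B)$ for every irreducible $\pi_B$. If $\ker\pi_B\nsupseteq B\al(A)B$, then $\pi_B\circ\al\colon A\to\K(H_B)$ is a nonzero representation; restricted to its essential subspace it is a nondegenerate representation of $A$ by compact operators, which decomposes as an orthogonal direct sum of irreducible subrepresentations by the structure theory of $C^*$-subalgebras of $\K(H_B)$. Each summand is a witness $[\pi_A]\in\sal([\pi_B])$, giving the reverse inclusion, and in addition $\bigcap_{[\pi_A]\in\sal([\pi_B])}\ker\pi_A=\ker(\pi_B\circ\al)=\al^{-1}(\ker\pi_B)$. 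For continuity, take a basic Jacobson open set $U_I=\{[\pi_A]:\ker\pi_A\nsupseteq I\}\subseteq\SA$ for an ideal $I$ of $A$. Unwinding the definitions, $[\pi_B]\in\sal^{-1}(U_I)$ iff some $[\pi_A]\in\sal([\pi_B])$ satisfies $I\nsubseteq\ker\pi_A$, iff $I\nsubseteq\al^{-1}(\ker\pi_B)$, iff $B\al(I)B\nsubseteq\ker\pi_B$; thus $\sal^{-1}(U_I)$ is the complement of a Jacobson-closed set in $\SB$, hence open. The main technical step is the liminal decomposition into irreducible subrepresentations: without it, $\pi_B\circ\al$ need not split into cyclic direct summands, and neither the sharpened description of $D(\sal)$ nor the continuity of $\sal$ can be expected to hold.
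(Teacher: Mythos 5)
Your proposal is correct and follows essentially the same route as the paper: identify the image as the hull of $\ker\al$ via factoring through $\al(A)$ and extending an irreducible representation of the subalgebra $\al(A)$ to one of $B$, control $D(\sal)$ by the chain $\ker(\pi_B\circ\al)\nsupseteq J\iff\ker\pi_B\nsupseteq B\al(J)B$, and in the liminal case use the decomposition of $\pi_B\circ\al$ into irreducible summands to reverse the one problematic implication and get continuity. The only difference is that you spell out the standard pure-state extension lemma where the paper simply invokes it.
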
 
 \begin{proof}
If $[\pi_A]\in \sal(\SB)$,  then  $\pi_A \leq \pi_B\circ \al$ for some  $\pi_B\in \Irr(B)$,  and hence 
$\ker\pi_A \supseteq  \ker \al$. Conversely, if $[\pi_A]\in \SA$ is such that $\ker\pi_A \supseteq  \ker \al$, then $\pi_A$ factors thorough to the irreducible representation   of $A/\ker\al \cong \al(A)$. Thus the formula $\pi(\al(a)):=\pi_A(a)$, $a\in A$, yields a well defined element of $\Irr(\al(A))$. Extending $\pi$  to any $\pi_B\in \Irr(B)$ one has $\pi_A\leq \pi_B\circ \al$.

Now, let $J$ be an ideal of $A$. Then $\widehat{J}=\{[\pi_A]\in \SA: \ker\pi \nsupseteq J\}$ 
is open and we have
\begin{align*}
[\pi_B]\in \sal^{-1}(\widehat{J}) & \Longleftrightarrow \exists_{\pi_A\in \Irr(A)}\,\, \pi_A\leq  \pi_B\circ \al,\,\,\, \ker\pi_A \nsupseteq J  
\\
&\Longrightarrow \ker(\pi_B\circ \al)\nsupseteq J
\\
&\Longleftrightarrow \ker\pi_B\nsupseteq \al(J)
\\
& \Longleftrightarrow \ker\pi_B\nsupseteq B\al(J)B. 
\end{align*}
That is, $\sal^{-1}(\widehat{J}) \subseteq \{\pi_B \in \SB:  \ker\pi_B\nsupseteq B\al(J)B\}$ and in particular  $D(\sal)=\sal^{-1}(\widehat{A})\subseteq \{\pi_B \in \SB:  \ker\pi_B\nsupseteq B\al(A)B\} $. 

If we additionally assume that $B$ is liminal, then for   $\pi_B \in \Irr(B)$ the  representation  $\pi_B\circ \al$ decomposes into a  direct sum of  irreducibles, see for instance \cite[\S 5.4.13]{Dix:C*-algebras}. 
Namely,  there is a subset $K$ of $\sal_0(\pi_B)$ such that $\pi_B\circ \al=\bigoplus_{\pi_A\in K}\pi_A\oplus 0$ (where $0$ stands for the zero representation and is  vacuous if $\pi_B\circ \al$ is nondegenerate).  Hence 
 the implication
$$
 \ker(\pi_B\circ \al)\nsupseteq J \Longrightarrow  
\exists{\pi_A\in K\subseteq \Irr(A)}\,\, \text{ s. t. }\pi_A\leq  \pi_B\circ \al,\,\,\, \ker\pi_A \nsupseteq J
$$
holds true. This combined with the preceding argument yields $\sal^{-1}(\widehat{J})=
\{\pi_B \in \SB:  \ker\pi_B\nsupseteq B\al(J)B\}$ and     the second part of the assertion follows. 
\end{proof} 

\begin{ex}
Let $H=L^2_\mu[0,1]$ with $\mu$ the Lebesgue measure. Put $B:=\B(H)$, $A:=L^{\infty}[0,1]$ and let $\al:A\to B$ be the monomorphism sending $a\in A$ to the operator of multiplication by $a$.
Then $\pi_B=id$ is irreducible and $\pi_B\circ \al$ is faithful but  $\sal([\pi_B])=\emptyset$. Accordingly,
$$
D(\sal)\neq \{[\pi_B]\in \SB: \ker\pi_B \nsupseteq B\al(A)B\}=\SB. 
$$
 \end{ex}


\subsection{Multivalued maps dual  to regular $C^*$-correspondences}

Let $X$ be a regular $C^*$-correspondence with coefficients in $A$. 
We may treat $X$   as a $\K(X)$-$\langle X,X\rangle _A$-imprimitivity bimodule and therefore the induced representation functor $X\dashind:\Irr(\langle X,X\rangle _A)\to \Irr(\K(X))$ factors through to  the homeomorphism $[X\dashind]: \widehat{\langle X,X\rangle}_A\to \widehat{\K(X)}$, which in turn may be 
viewed as a multivalued map  $[X\dashind]: \widehat{A}\to \widehat{\K(X)}$ with domain $D([X\dashind])=\widehat{\langle X,X\rangle}_A$. 

\begin{defn}\label{dual map def}
Let $X$ be a regular $C^*$-correspondence over $A$.  We define   \emph{dual  map} $\X:\SA\to \SA$ to 
$X$ as the following composition of multivalued maps
$$
\X=\widehat{\phi}\circ  [X\dashind ],
$$
where $\widehat{\phi}:\widehat{\K(X)}\to \SA$ is dual to the left action $\phi:A\to\K(X)$ of $A$ on $X$.

Alternatively, $\X$ is a factorization of the map $\X_0:=\widehat{\phi}_0\circ  X\dashind: \Irr(A)\to \Irr(A)$, 
cf. \eqref{pre dual}.
\end{defn}

\begin{prop}\label{basic properties of the dual maps}
The multivalued map dual to a regular $C^*$-correspondence $X$ is always surjective, that is  $\X(\SA)=\SA$.  
The domain of $\X$ satisfies the following inclusion
\begin{equation}\label{domain estimation}
D(\X)\subseteq \widehat{\langle  X,\phi(A)X\rangle _A}. 
\end{equation}
Note here that $\langle  X,\phi(A)X\rangle _A$ is an ideal in $A$. If, in addition, $A$ is liminal, then $\X$ is a continuous multivalued map and  we  have the equality in \eqref{domain estimation}; in particular,
 if  $X$ is  full and essential, then  $\X:\SA \to \SA$ is a continuous multivalued surjection with the full domain, $D(\X)=\SA$.

\end{prop}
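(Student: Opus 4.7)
The plan is to reduce everything to Proposition~\ref{something positive for Wojtek}, applied to the dual $\widehat{\phi}:\widehat{\K(X)}\to \SA$ of the left action, combined with Rieffel's theorem that $[X\dashind]$ is a homeomorphism of $\widehat{\langle X,X\rangle_A}$ onto $\widehat{\K(X)}$ (which is available since regularity makes $X$ a $\K(X)$-$\langle X,X\rangle_A$ imprimitivity bimodule). For surjectivity, regularity forces $\ker\phi=0$, so Proposition~\ref{something positive for Wojtek} yields $\widehat{\phi}(\widehat{\K(X)}) = \SA$, and composing I obtain
\[
\X(\SA) \;=\; \widehat{\phi}\bigl([X\dashind](\SA)\bigr) \;=\; \widehat{\phi}(\widehat{\K(X)}) \;=\; \SA.
\]

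That $\langle X,\phi(A)X\rangle_A$ is an ideal is a direct check using $b\langle x,\phi(a)y\rangle_A = \langle xb^*,\phi(a)y\rangle_A$ and $\langle x,\phi(a)y\rangle_A b = \langle x,\phi(a)yb\rangle_A$. For the inclusion $D(\X)\subseteq \widehat{\langle X,\phi(A)X\rangle_A}$, I would start with $[\pi_A]\in D(\X)$, observe that $[X\dashind(\pi_A)]$ must then be defined and lie in $D(\widehat{\phi})$, and invoke Proposition~\ref{something positive for Wojtek} to conclude that the representation $(X\dashind(\pi_A))\circ\phi$ of $A$ is non-zero. The inner product on the induced space $X\otimes_{\pi_A}H_{\pi_A}$ reads
\[
\bigl\langle x\otimes h_1,\;(X\dashind(\pi_A))(\phi(a))(y\otimes h_2)\bigr\rangle \;=\; \bigl\langle h_1,\;\pi_A(\langle x,\phi(a)y\rangle_A)h_2\bigr\rangle,
\]
so non-triviality of $(X\dashind(\pi_A))\circ\phi$ is equivalent to $\pi_A(\langle X,\phi(A)X\rangle_A)\neq 0$, i.e.\ $[\pi_A]\in\widehat{\langle X,\phi(A)X\rangle_A}$. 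The same computation, read as an equivalence, will furnish the reverse inclusion in the liminal case.

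The key auxiliary fact for the liminal case is that $\K(X)$ is itself liminal whenever $A$ is. My plan is to observe that the closed ideal $\langle X,X\rangle_A$ of the liminal algebra $A$ is liminal, and that $\K(X)$ is strongly Morita equivalent to $\langle X,X\rangle_A$ via $X$; since liminality is preserved under Morita equivalence (a standard Raeburn--Williams style result), $\K(X)$ is liminal. Proposition~\ref{something positive for Wojtek} then gives continuity of $\widehat{\phi}$ together with the sharp equality $D(\widehat{\phi}) = \widehat{\K(X)\phi(A)\K(X)}$; continuity of $\X = \widehat{\phi}\circ[X\dashind]$ follows from the identity $\X^{-1}(U) = [X\dashind]^{-1}(\widehat{\phi}^{-1}(U))$ together with the homeomorphism property of $[X\dashind]$ on the open subset $\widehat{\langle X,X\rangle_A}\subseteq\SA$, while equality in the domain description is the reverse direction of the computation in the previous paragraph. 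Finally, for the full and essential case, fullness gives $\langle X,X\rangle_A=A$ and essentiality makes $\phi(A)X$ dense in $X$, so $\langle X,\phi(A)X\rangle_A$ is dense in $\langle X,X\rangle_A = A$ and therefore equals $A$, forcing $D(\X)=\SA$. The main obstacle is the Morita-invariance step for liminality; everything else is a routine combination of Proposition~\ref{something positive for Wojtek}, Rieffel's theorem, and the inner product formula on the induced representation space.
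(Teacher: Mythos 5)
Your proposal is correct and follows essentially the same route as the paper: surjectivity and the domain estimates are reduced to Proposition~\ref{something positive for Wojtek} via the Rieffel homeomorphism $[X\dashind]$, the kernel of $X\dashind(\pi_A)\circ\phi$ is identified through the inner product $\langle x\otimes h_1,\phi(a)y\otimes h_2\rangle=\langle h_1,\pi_A(\langle x,\phi(a)y\rangle_A)h_2\rangle$, and the liminal case rests on Morita-invariance of liminality to transfer the hypothesis from $A$ to $\K(X)$. The only cosmetic difference is that the paper phrases the kernel computation for the ideal $\K(X)\phi(A)\K(X)$ and then uses $\langle X,\K(X)\phi(A)\K(X)X\rangle_A=\langle X,\phi(A)X\rangle_A$, whereas you work with $\phi(A)$ directly; these are equivalent since $\ker$ of a representation is an ideal.
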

\begin{proof}
As $[X\dashind]: \widehat{A}\to \widehat{\K(X)}$ is surjective and $\ker\phi=\{0\}$ we get $\X(\SA)=\SA$ by Proposition \ref{something positive for Wojtek}. Since $[X\dashind]: \widehat{\langle X,X\rangle _A}\to \widehat{\K(X)}$ is a homeomorphism, it follows from Proposition \ref{something positive for Wojtek}  that  
\begin{equation}\label{domain estimation2}
D(\X)\subseteq  [X\dashind ]^{-1}(\widehat{\K(X)\phi(A)\K(X)})
\end{equation}
with equality if $A$ is liminal (note that if   $A$ is liminal
 then   $\K(X)$ is also  liminal being Morita-Rieffel equivalent to the liminal 
$C^*$-algebra $\langle X,X\rangle _A\subseteq A$). Hence it suffices to show that the sets in the  right hand 
sides of \eqref{domain estimation} and \eqref{domain estimation2} coincide. However, for any 
representation  $\pi$ of $A$ and any $C^*$-subalgebra $B\subseteq \K(X)$ we have 
$$
B\subseteq \ker (X\dashind(\pi)) \,\, \Longleftrightarrow   \,\, \pi(\langle BX, B X\rangle_A)=0 \,\, \Longleftrightarrow   \,\, \langle X, B X\rangle_A  \subseteq \ker \pi. 
$$
 Thus the assertion  follows from the equality 
$$
\langle  X,\K(X)\phi(A)\K(X)X\rangle _A= \langle  \K(X)X,\phi(A)\K(X)X\rangle _A= \langle  X,\phi(A)X\rangle _A.
$$
\end{proof}

In view of Proposition \ref{regular proposition},  if  $X$ and $Y$ are regular $C^*$-correspondences 
with coefficients in $A$, then the tensoring on the right by the identity 
$1_Y$ in $Y$ yields a homomorphism $\otimes 1_Y:\K(X) \to\K(X\otimes Y) $. With help of its dual map we   
are able to analyze the relationship between the spectra of compact operators on the level of spectrum of $A$.

 \begin{prop}\label{tensoring vs duals} 
Let $X$ and $Y$ be regular $C^*$-correspondences  with coefficients in $A$. Then we have 
\begin{equation}\label{inclusion which supposed to be equality}
 [X\dashind ]\circ \Y=\widehat{\otimes 1_Y}\circ [(X\otimes Y)\dashind ].
 \end{equation}  
In other words, the diagram of multivalued maps
$$
\xymatrix{ \SA   \ar[rr]^{(X\otimes Y)\dashind}   \ar[d]^{\Y} & &  \widehat{\K(X\otimes Y)} \ar[d]^{\widehat{\otimes 1_Y}}\\
 \SA  \ar[rr]^{X\dashind}  & & \widehat{\K(X)}   
 }
$$
is commutative,  and in particular  
$$
D([X\dashind ]\circ \Y)=D(\widehat{\otimes 1_Y}\circ [(X\otimes Y)\dashind ])= 
\Y^{-1}(\widehat{\langle X, X\rangle}_A).$$  
\end{prop}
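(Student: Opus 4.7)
The plan is to reduce \eqref{inclusion which supposed to be equality} to the classical \emph{induction in stages} principle together with the subrepresentation version of Rieffel's spectral correspondence for the $\K(X)$--$\langle X,X\rangle_A$ imprimitivity bimodule $X$.

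The first step is, for each representation $\pi\colon A\to\B(H)$, to produce the natural unitary
$$
U\colon (X\otimes_A Y)\otimes_\pi H\longrightarrow X\otimes_\sigma (Y\otimes_\pi H),\qquad (x\otimes y)\otimes h\longmapsto x\otimes (y\otimes h),
$$
where $\sigma:=Y\dashind(\pi)\circ\phi_Y$ is the natural action of $A$ on $Y\otimes_\pi H$. Unitarity is immediate from the $A$-balancing conditions and the observation that both inner products reduce to $\langle h_1,\pi(\langle y_1,\phi_Y(\langle x_1,x_2\rangle_A)y_2\rangle_A)h_2\rangle$. A direct computation on elementary tensors then verifies that $U$ intertwines the action of $T\in\K(X)$ implemented on the left-hand side by $(X\otimes Y)\dashind(\pi)(T\otimes 1_Y)$ and on the right-hand side by $X\dashind(\sigma)(T)$. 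Consequently
$$
(X\otimes Y)\dashind(\pi)\circ(\otimes 1_Y)\simeq X\dashind\bigl(Y\dashind(\pi)\circ\phi_Y\bigr)
$$
as representations of $\K(X)$.

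The second step is to invoke Rieffel's result in the form: for any $A$-representation $\sigma'$, the functor $X\dashind$ sets up a bijection between irreducible subrepresentations of $\sigma'$ not vanishing on $\langle X,X\rangle_A$ and irreducible subrepresentations of $X\dashind(\sigma')$, with $[\rho_A]$ sent to $[X\dashind(\rho_A)]$. Combining this with the first step and unfolding definitions, for $[\pi_A]\in\widehat{\langle Y,Y\rangle_A}$ the right-hand side of \eqref{inclusion which supposed to be equality} evaluated at $[\pi_A]$ is, by definition of $\widehat{\otimes 1_Y}$, the set of irreducible $[\tau]\in\widehat{\K(X)}$ with $\tau\preceq (X\otimes Y)\dashind(\pi_A)\circ(\otimes 1_Y)$; by Step 1 this equals the set of irreducible subrepresentations of $X\dashind(\sigma)$ with $\sigma=Y\dashind(\pi_A)\circ\phi_Y$; and by Step 2 this coincides with $\{[X\dashind(\rho_A)]:[\rho_A]\preceq\sigma,\ \rho_A|_{\langle X,X\rangle_A}\ne 0\}$, which is precisely $[X\dashind]\circ\Y([\pi_A])$ once one unfolds $\Y([\pi_A])=\widehat{\phi_Y}([Y\dashind(\pi_A)])$. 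The boundary case $[\pi_A]\notin\widehat{\langle Y,Y\rangle_A}$ is handled by observing $\langle X\otimes Y,X\otimes Y\rangle_A=\langle Y,\phi_Y(\langle X,X\rangle_A)Y\rangle_A\subseteq\langle Y,Y\rangle_A$, so both sides of \eqref{inclusion which supposed to be equality} evaluate to $\emptyset$ at such $[\pi_A]$. The equality of domains then drops out, since $D([X\dashind]\circ\Y)=\Y^{-1}(D([X\dashind]))=\Y^{-1}(\widehat{\langle X,X\rangle_A})$ directly from the definitions.

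The main obstacle will not be the induction-in-stages intertwining computation, which is a routine check on elementary tensors, but rather the discipline required to invoke the \emph{subrepresentation} rather than merely the \emph{irreducible} form of Rieffel's correspondence: the intermediate representation $\sigma=Y\dashind(\pi_A)\circ\phi_Y$ is in general not irreducible, so one must rely on the lattice isomorphism of invariant subspaces provided by the imprimitivity-bimodule structure of $X$ rather than just on the spectral homeomorphism $[X\dashind]\colon\widehat{\langle X,X\rangle_A}\to\widehat{\K(X)}$.
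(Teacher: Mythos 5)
Your proposal is correct and follows essentially the same route as the paper: both rest on the induction-in-stages identification $(X\otimes Y)\otimes_{\pi_A}H\cong X\otimes_\sigma(Y\otimes_{\pi_A}H)$ with $\sigma=Y\dashind(\pi_A)\circ\phi_Y$, together with the Rieffel correspondence for the imprimitivity $\K(X)$--$\langle X,X\rangle_A$-bimodule $X$. The only (cosmetic) difference is that you apply the subrepresentation form of Rieffel's lattice isomorphism once to get both inclusions simultaneously, whereas the paper proves the forward inclusion by exhibiting $X\otimes_\pi H_\pi$ directly as an irreducible invariant subspace and the reverse one by inducing back along the dual bimodule $\widetilde X$.
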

\begin{proof}
Let $\pi_A:A\to \B(H)$  be an irreducible representation. If   $\pi\in\Y_0(\pi_A)$, then   $H_\pi$ is a closed subspace of $Y\otimes_{\pi_A} H$ irreducible under  the left multiplication by elements of $A$, or more precisely, irreducible for $(Y\dashind (\pi_A))(\phi_Y(A))$. Since the tensor product of $C^*$-correspondences is both  associative and  distributive  with respect to direct sums, we may naturally identify $X\otimes_{\pi} H_{\pi}$ with a closed subspace of $X\otimes Y\otimes_{\pi_A} H$. 
Since for  $a\in \K(X)$ we have
$$
((X\otimes Y)\dashind(\pi_A))(a\otimes 1_Y)(x\otimes y\otimes_{\pi_A} h)= ax\otimes y\otimes_{\pi_A} h,
$$
 we see that the action of $((X\otimes Y)\dashind(\pi_A))(a\otimes 1_Y)$ on $X\otimes_{\pi} H_{\pi}$ coincides with the action of $(X\dashind(\pi))(a)$. In particular,  the subspace $X\otimes_{\pi} H_{\pi}$   is either $\{0\}$,  when $\pi \notin \widehat{\langle X, X\rangle}_A$, or  is irreducible for  
$((X\otimes Y)\dashind(\pi_A))(\K(X)\otimes 1_Y)$.   Consequently,  
$$
(X\dashind)\circ \Y_0(\pi_A)\subseteq\widehat{\otimes 1_Y}_0\circ (X\otimes Y)\dashind (\pi_A). 
$$
To show the reverse inclusion,  let $\rho \in  (\widehat{\otimes 1_Y})_0\circ (X\otimes Y)\dashind (\pi_A)$. 
Then  $\rho$ is an irreducible subrepresentation of  the representation $\pi_{\K(X)}:\K(X)\to \B(X\otimes Y\otimes_{\pi_A} H)$, where  $\pi_{\K(X)}(a)=((X\otimes Y)\dashind(\pi_A))(a\otimes 1_Y)$. We may consider 
the dual  $C^*$-correspondence $\widetilde{X}$ (not to be  confused with the dual $\X$ to the  
$C^*$-correspondence $X$) as an $\langle X, X\rangle_A$-$\K(X)$-imprimitivity bimodule. Then using the 
natural isomorphism  
$$
(\widetilde{X}\otimes_{\K(X)}\otimes X) \otimes Y \otimes_{\pi_A}  H \cong Y\otimes_{\pi_A}  H,
$$
cf.  \cite[Proposition 2.28]{morita}, we see that $\widetilde{X}\dashind(\pi_{\K(X)})$ is equivalent to $Y\dashind(\pi_A)\circ \phi_Y:A\to \B(Y\otimes_{\pi_A}X)$.
Since induction respects direct sums   \cite[Proposition 2.69]{morita},  
   $\widetilde{X}\dashind(\rho)$ is  equivalent to an irreducible subrepresentation $\pi$ of $Y\dashind (\pi_A)\circ \phi_Y$. Then  $\pi$ belongs to both $\widehat{\langle X, X\rangle}_A$ and  $\Y_0(\pi_A)$, and we have 
   $$
   \rho \cong X\dashind(\widetilde{X}\dashind (\rho))\cong  X\dashind (\pi).
   $$ 
Consequently, $\widehat{\otimes 1_Y}_0\circ (X\otimes Y)\dashind (\pi_A) \subseteq  
X\dashind \circ \Y_0(\pi_A)$.
\end{proof}

\begin{cor}\label{cor on composition}
 The composition of  duals to $C^*$-correspondences  coincides with the dual of their tensor product:  
$$
\X\circ \Y= \widehat{X\otimes Y}. 
$$
\end{cor}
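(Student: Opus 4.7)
The plan is to combine Proposition \ref{tensoring vs duals} with the definition of the dual map, reducing the identity $\X\circ\Y=\widehat{X\otimes Y}$ to a short chain of equalities. Unwinding Definition \ref{dual map def}, we have $\X=\widehat{\phi_X}\circ [X\dashind]$, and since the left action on $X\otimes Y$ is $\phi_{X\otimes Y}=\phi_X\otimes 1_Y=(\otimes 1_Y)\circ\phi_X$, it follows that $\widehat{X\otimes Y}=\widehat{\phi_X\otimes 1_Y}\circ [(X\otimes Y)\dashind]$. Granted the functoriality identity $\widehat{\phi_X\otimes 1_Y}=\widehat{\phi_X}\circ\widehat{\otimes 1_Y}$, one applies Proposition \ref{tensoring vs duals} to the inner composition to obtain
$$
\widehat{X\otimes Y}=\widehat{\phi_X}\circ\widehat{\otimes 1_Y}\circ [(X\otimes Y)\dashind]=\widehat{\phi_X}\circ [X\dashind]\circ\Y=\X\circ\Y.
$$

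The step requiring some care is the functoriality identity itself. The inclusion $\widehat{\phi_X}\circ\widehat{\otimes 1_Y}\subseteq\widehat{\phi_X\otimes 1_Y}$ is immediate from transitivity of $\leq$ on subrepresentations. For the reverse inclusion, given $\rho\in\widehat{\K(X\otimes Y)}$ and an irreducible $A$-subrepresentation $\pi'$ of $\rho\circ(\otimes 1_Y)\circ\phi_X$, one must produce an irreducible $\K(X)$-subrepresentation $\sigma$ of $\rho\circ(\otimes 1_Y)$ with $\pi'\leq\sigma\circ\phi_X$. The proof of Proposition \ref{tensoring vs duals} effectively supplies an orthogonal decomposition of the Hilbert space $H_\rho\cong X\otimes Y\otimes_{\pi_A}H_{\pi_A}$ as a $\K(X)$-module into irreducible components indexed by $\Y(\pi_A)\cap\widehat{\langle X,X\rangle}_A$, together with a subspace on which $\K(X)$ acts trivially. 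The invariant subspace of $\pi'$ cannot sit in that trivial summand, since $\phi_X$ is injective and $\pi'$ is nonzero; projecting onto one of the irreducible $\K(X)$-components then yields a nonzero $A$-intertwiner out of $\pi'$, which by Schur's lemma produces the required embedding.

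The main obstacle, then, is this functoriality verification. Since all the Hilbert-space identifications have already been carried out inside Proposition \ref{tensoring vs duals}, I expect the remaining work to amount to careful bookkeeping rather than genuinely new computation, and so the corollary follows cleanly from the preceding proposition.
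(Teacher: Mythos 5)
Your argument is correct and follows essentially the same route as the paper: both proofs combine Proposition \ref{tensoring vs duals} with the observation that $\phi_{X\otimes Y}(A)\subseteq \K(X)\otimes 1_Y$ acts on each irreducible component $X\otimes_\pi H_\pi$ of $X\otimes Y\otimes_{\pi_A}H$ exactly as $X\dashind(\pi)\circ\phi_X$, which is precisely the content of your ``functoriality identity'' $\widehat{\phi_X\otimes 1_Y}=\widehat{\phi_X}\circ\widehat{\otimes 1_Y}$ restricted to representations in the range of $(X\otimes Y)\dashind$. Your explicit treatment of the reverse inclusion via the decomposition from the proof of Proposition \ref{tensoring vs duals} is the same step the paper carries out (somewhat more tersely) when it asserts that all $\K(X)\otimes 1_Y$-irreducible subspaces are of the form $X\otimes_\pi H_\pi$ with $\pi\in\Y_0(\pi_A)\cap\widehat{\langle X,X\rangle}_A$.
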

\begin{proof}
We showed in  the proof of Proposition \ref{tensoring vs duals} that $X\dashind \circ \Y_0=\widehat{\otimes 1_Y}_0\circ (X\otimes Y)\dashind$, and all subspaces of $X\otimes Y\otimes_{\pi_A} H$ irreducible for $(X\otimes Y)\dashind(\pi_A)(\K(X)\otimes 1_Y)$ are of the form  $X\otimes_{\pi} H_{\pi}$, where $\pi\in\Y_0(\pi_A)\cap \widehat{\langle X, X\rangle}_A$. Since $\phi_{X\otimes Y}(A)\subseteq \K(X)\otimes 1_Y$,  the action of  $(X\otimes Y)\dashind(\pi_A)(\phi_{X\otimes Y}(a))$, $a\in A$,  coincides on   $X\otimes_{\pi} H_{\pi}$ with  $ X\dashind (\pi)(\phi_X(a))$. Thus we have 
$$
\X_0 \circ \Y_0=(\widehat{\phi_X}_0 \circ X\dashind) \circ \Y_0= \widehat{\phi_{X\otimes Y}}_0
\circ (X\otimes Y)\dashind= \widehat{X\otimes Y}_0. 
$$
\end{proof}


\subsection{Semigroups dual to regular product systems}

Let $X$ be  a product system over $P$. By Corollary \ref{cor on composition},    the family $\{\X_p\}_{p\in P}$  of dual maps to $C^*$-correspondences $X_p$, $p\in P$, forms a semigroup  of multivalued maps on $\SA$, that is 
$$
\X_e=id,\quad\textrm{and}\quad \X_p\circ \X_q= \X_{pq}, \qquad p,q\in P.
$$
If  $A$ is liminal then these multivalued maps 
are continuous by Proposition \ref{basic properties of the dual maps}. 

\begin{defn}
We call the \emph{semigroup} $\X:=\{\X_p\}_{p\in P}$  \emph{dual to the product system} $X$. 
\end{defn}

In the remainder of this subsection we prove certain technical facts concerning the interaction among 
Cuntz-Pimsner representations, dual maps and the process of induction. 

\begin{lem}\label{lemma instead of diagrams} Let  $X$ be  a product system over  a left 
cancellative semigroup $P$. If  $p, q, s \in P$ are such that $s \geq p, q$, then 
$$
\X_{q^{-1}s}\X_{p^{-1}s}^{-1}=[X_q\dashind^{-1}]\circ \widehat{i_{q}^{s}} \circ 
\widehat{i_{p}^{s}}^{-1} \circ [ X_p\dashind ]. 
$$
\end{lem}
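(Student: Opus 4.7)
The plan is to derive the identity from two applications of Proposition \ref{tensoring vs duals} followed by formal manipulation of multivalued inverses.

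First, under the product system identifications $X_p\otimes X_{p^{-1}s}\cong X_s$ and $X_q\otimes X_{q^{-1}s}\cong X_s$, the homomorphisms $T\mapsto T\otimes 1_{X_{p^{-1}s}}$ and $T\mapsto T\otimes 1_{X_{q^{-1}s}}$ coincide with $\iota_p^s\colon \K(X_p)\to \K(X_s)$ and $\iota_q^s\colon \K(X_q)\to \K(X_s)$ respectively. Applying Proposition \ref{tensoring vs duals} with $(X,Y)=(X_p,X_{p^{-1}s})$ and $(X,Y)=(X_q,X_{q^{-1}s})$ therefore gives the two commutative diagrams
\begin{equation*}
[X_p\dashind]\circ \X_{p^{-1}s}= \widehat{\iota_p^s}\circ [X_s\dashind],\qquad [X_q\dashind]\circ \X_{q^{-1}s}= \widehat{\iota_q^s}\circ [X_s\dashind].
\end{equation*}

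Next, I invoke the fact (quoted just before Definition \ref{dual map def}) that each $[X_r\dashind]$ is an honest homeomorphism from $\widehat{\langle X_r,X_r\rangle}_A$ onto $\widehat{\K(X_r)}$, so in particular $[X_r\dashind]\circ [X_r\dashind]^{-1}$ is the identity on $\widehat{\K(X_r)}$. Precomposing the first diagram with $[X_p\dashind]^{-1}$ yields $\X_{p^{-1}s}=[X_p\dashind]^{-1}\circ \widehat{\iota_p^s}\circ [X_s\dashind]$, whose multivalued inverse, via $(f\circ g)^{-1}=g^{-1}\circ f^{-1}$, is
$$\X_{p^{-1}s}^{-1}=[X_s\dashind]^{-1}\circ \widehat{\iota_p^s}^{-1}\circ [X_p\dashind].$$
The analogous manipulation of the second diagram gives $\X_{q^{-1}s}=[X_q\dashind]^{-1}\circ \widehat{\iota_q^s}\circ [X_s\dashind]$. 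Composing the two,
\begin{align*}
\X_{q^{-1}s}\circ \X_{p^{-1}s}^{-1}
&=[X_q\dashind]^{-1}\circ \widehat{\iota_q^s}\circ [X_s\dashind]\circ [X_s\dashind]^{-1}\circ \widehat{\iota_p^s}^{-1}\circ [X_p\dashind]\\
&=[X_q\dashind]^{-1}\circ \widehat{\iota_q^s}\circ \widehat{\iota_p^s}^{-1}\circ [X_p\dashind],
\end{align*}
the final cancellation being legitimate because the image of $\widehat{\iota_p^s}^{-1}$ lies inside $\widehat{\K(X_s)}$, where $[X_s\dashind]\circ [X_s\dashind]^{-1}$ reduces to the identity.

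The main subtlety lies in the multivalued bookkeeping during the \emph{solve-and-invert} step: strictly, $[X_r\dashind]^{-1}\circ [X_r\dashind]$ is the identity only on $\widehat{\langle X_r,X_r\rangle}_A$, and one has to check that each intermediate composition lands inside the relevant domain so that no spurious points are introduced or annihilated under cancellation. This is verified directly from the identity \eqref{multivalued identity} for $f\circ f^{-1}$ together with the commutative squares above; once that bookkeeping is in place, the argument reduces to the formal computation displayed.
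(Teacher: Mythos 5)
Your proof is correct and follows essentially the same route as the paper's: two applications of Proposition \ref{tensoring vs duals} with $(X,Y)=(X_p,X_{p^{-1}s})$ and $(X_q,X_{q^{-1}s})$, followed by solving for $\X_{p^{-1}s}$ and $\X_{q^{-1}s}$, inverting the first, and composing. If anything, your bookkeeping is the more careful of the two --- the paper's displayed intermediate identities write $[X_s\dashind]$ where $[X_p\dashind]$ and $[X_q\dashind]$ should appear, which your version gets right, and you also justify the cancellation $[X_s\dashind]\circ[X_s\dashind]^{-1}=\mathrm{id}$ on $\widehat{\K(X_s)}$ explicitly.
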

\begin{proof}
Applying Proposition \ref{tensoring vs duals} to  $Y=X_{p^{-1}s}$,   $X=X_p$ and  
$Y=X_{q^{-1}s}$, $X=X_q$, respectively,   we get  
$$ 
 [X_{s}\dashind ] \X_{p^{-1}s}=\widehat{i_{p}^{s}} [X_{s}\dashind ] \quad \textrm{ and } \quad   [X_{s}\dashind ] \X_{q^{-1}s}= \widehat{i_{q}^{s}} [X_{s}\dashind ].
$$
As $[X_{s}\dashind ]$ is a homeomorphism,  this is equivalent to
 $$
 \X_{p^{-1}s}= [X_{s}\dashind ]^{-1}\widehat{i_{p}^{s}}[X_{s}\dashind ] \quad \textrm{ and } \quad 
\X_{q^{-1}s}=[X_{s}\dashind ]^{-1}\widehat{i_{q}^{s}}[X_{s}\dashind ], 
$$
and the  assertion follows.
\end{proof}


The  following Lemma  \ref{lemma for imprimitivity} is virtually  a special case of    \cite[Lemma 1.3]{kwa}. 

\begin{lem}\label{lemma for imprimitivity} 
Suppose $Y$ is an imprimitivity  Hilbert $A$-$B$-bimodule and $(\pi_A,\pi_Y, \pi_B)$ is its representation on 
a Hilbert space $H$. Thus $\pi_A:A\to \B(H)$, $\pi_B:B\to \B(H)$ are representations and with 
the map $\pi_Y:Y\to \B(H)$ they satisfy
$$
\pi_A(a)\pi_Y(y)\pi_B(b)=\pi_Y(ayb),\quad \pi_Y(x)\pi_Y(y)^*=\pi_A({_A\langle} x, y\rangle), \quad \pi_Y(x)^*\pi_Y(y)=\pi_B(\langle x, y\rangle_B),
$$
$a\in A$, $b\in B$, $x,y\in Y$. If $\pi$ is an irreducible subrepresentation of $\pi_B$ then the restriction 
$\rho(a):= \pi_A(a)|_{\pi_Y(Y) H_\pi}$ yields an irreducible subrepresentation  of $\pi_A$ such that 
$ [\rho]=[ Y\dashind (\pi)]$.
\end{lem}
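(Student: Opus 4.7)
The plan is to construct an explicit unitary intertwiner between $\rho$ and the induced representation $Y\dashind(\pi)$, from which both the irreducibility of $\rho$ and the equivalence $[\rho]=[Y\dashind(\pi)]$ will follow at once. This essentially adapts to the imprimitivity-bimodule picture the standard realization of an induced representation as a concrete subspace of a given covariant representation.

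First I would check that the closed subspace $\clsp\pi_Y(Y)H_\pi$ is invariant under $\pi_A$. This is immediate from the covariance identity $\pi_A(a)\pi_Y(y)=\pi_Y(ay)$, so $\rho$ is well defined as a subrepresentation of $\pi_A$ on that subspace.

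Next, I would define $U\colon Y\odot H_\pi\to\clsp\pi_Y(Y)H_\pi$ on elementary tensors by $U(y\otimes h)=\pi_Y(y)h$ and verify that it extends to a unitary. The key computation uses the third defining relation of the covariant representation together with the fact that $\pi$ is a subrepresentation of $\pi_B$ (so $H_\pi$ is invariant for $\pi_B$ and $\pi_B|_{H_\pi}=\pi$):
\[
\langle \pi_Y(y_1)h_1,\pi_Y(y_2)h_2\rangle=\langle h_1,\pi_B(\langle y_1,y_2\rangle_B)h_2\rangle=\langle h_1,\pi(\langle y_1,y_2\rangle_B)h_2\rangle.
\]
This is exactly the inner product on the interior tensor product $Y\otimes_\pi H_\pi$, so $U$ descends to a well-defined isometry from $Y\otimes_\pi H_\pi$ onto $\clsp\pi_Y(Y)H_\pi$.

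Finally, I would verify the intertwining property: $U\bigl((Y\dashind(\pi))(a)(y\otimes_\pi h)\bigr)=U((ay)\otimes_\pi h)=\pi_Y(ay)h=\pi_A(a)\pi_Y(y)h=\pi_A(a)U(y\otimes_\pi h)$, which shows that $U$ is a unitary intertwiner of $Y\dashind(\pi)$ with $\rho$. By Rieffel's theorem, induction through an imprimitivity bimodule carries irreducible representations to irreducible representations; since $\pi$ is irreducible, so is $Y\dashind(\pi)$, and hence $\rho$, yielding $[\rho]=[Y\dashind(\pi)]$. The only delicate point is the identification $\pi_B|_{H_\pi}=\pi$, which is a direct consequence of $\pi$ being a subrepresentation of $\pi_B$; everything else is a routine verification of the imprimitivity relations.
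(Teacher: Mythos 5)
Your proof is correct and follows essentially the same route as the paper's: both construct the unitary between $\clsp\pi_Y(Y)H_\pi$ and $Y\otimes_\pi H_\pi$ via $\pi_Y(y)h\leftrightarrow y\otimes_\pi h$, check the isometry using the relation $\pi_Y(x)^*\pi_Y(y)=\pi_B(\langle x,y\rangle_B)$, verify the intertwining, and deduce irreducibility of $\rho$ from Rieffel's theorem applied to the imprimitivity bimodule $Y$. (Your inner-product computation is in fact stated more carefully than the paper's, which has a small typo writing $\pi_A(\langle y_i,y_j\rangle_A)$ where $\pi(\langle y_i,y_j\rangle_B)$ is meant.)
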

\begin{proof}
Let $\pi\leq \pi_B$ be a representation of $B$ on a Hilbert space $H_\pi \subset H$. The Hilbert space $\pi_Y(Y) H_\pi\subset H$ is invariant for elements of $\pi_A(A)$ and therefore $\rho(a):= \pi_A(a)|_{\pi_Y(Y) H_\pi}$, $a\in A$ defines a representation of $A$. Since 
$$
\|\sum_{i=1}^n\pi_{Y}(y_i)h_i\|^2= \sum_{i,j=1}^n\langle \pi_{Y}(y_i)h_i,\pi_{Y}(y_j)h_j \rangle = \sum_{i,j=1}^n \langle h_i,\pi_A(\langle y_i, y_j\rangle_A) h_j \rangle= \|\sum_{i=1}^n y_i\otimes_{\pi} h_i\|^2,
$$
the mapping      $\pi_{Y}(y)h \mapsto y \otimes_\pi h$, $y\in Y$, $h\in H_\pi$, extends by linearity and continuity to a unitary  operator $V:\pi_Y(Y)H_\pi \to Y \otimes_\pi H_\pi$, which  intertwines $\rho$ and $Y\dashind (\pi)$ because 
$$
V\rho(a)\pi_{Y}(y)h =V \pi_{Y}(ay) h= (ay \otimes_\pi h)= Y\dashind (\pi)(a)  V \pi_{Y}(y)h.
$$  
Accordingly, if $\pi$ is irreducible then $\rho$, being unitary equivalent to the irreducible representation $Y\dashind (\pi)$, is also irreducible.
\end{proof}

A counterpart of \cite[Lemma 1.3]{kwa} suitable for our purposes is the following statement.

\begin{lem}\label{representation structure lemma2}
Suppose $\psi$ is a Cuntz-Pimsner covariant representation of a regular product system $X$ over $P$ on a Hilbert space $H$. Let $p,q\in P$  and let $\pi$ be an irreducible summand of $\psi^{(q)}$ acting on a subspace 
$K$ of $H$. Then the restriction 
\begin{equation}\label{in dust representation}
\pi_p(T):= \psi^{(p)}(a)|_{\psi_{p}(X_p)\psi_q(X_q)^*K}, \qquad T\in\K(X_p),
 \end{equation}
yields a  representation $\pi_p:\K(X_p)\to \B(\psi_{p}(X_p)\psi_q(X_q)^*K)$ which is either 
zero or irreducible, and such that
 $$
 [\pi_p]= [  (X_p\dashind) ((X_q\dashind)^{-1} (\pi))].
 $$
 \end{lem}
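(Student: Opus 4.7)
The plan is to apply Lemma~\ref{lemma for imprimitivity} twice, exploiting that each fiber $X_p$ is naturally a $\K(X_p)$--$\langle X_p,X_p\rangle_A$ imprimitivity bimodule. First I will descend from the irreducible subrepresentation $\pi$ of $\psi^{(q)}$ to an irreducible subrepresentation $\rho$ of $\psi_e$ using the dual bimodule $\widetilde{X}_q$; then I will ascend from $\rho$ to $\psi^{(p)}$ via the bimodule $X_p$.

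For the descent, view $\widetilde{X}_q$ as an $\langle X_q,X_q\rangle_A$--$\K(X_q)$ imprimitivity bimodule. The triple $(\psi_e|_{\langle X_q,X_q\rangle_A},\, \tilde{\psi}_q,\, \psi^{(q)})$, where $\tilde{\psi}_q(\tilde{y}):=\psi_q(y)^*$, is a representation of this bimodule on $H$; the required inner product identities follow at once from $\psi_q(y)^*\psi_q(y')=\psi_e(\langle y,y'\rangle_q)$ and $\psi_q(y_1)\psi_q(y_2)^*=\psi^{(q)}(\Theta_{y_1,y_2})$. Lemma~\ref{lemma for imprimitivity} then produces, from the irreducible subrepresentation $\pi\leq\psi^{(q)}$ on $K$, an irreducible subrepresentation $\rho\leq\psi_e|_{\langle X_q,X_q\rangle_A}$ acting on $H_\rho:=\psi_q(X_q)^*K$, with $[\rho]=[\widetilde{X}_q\dashind(\pi)]$. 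By the standard identification of the inverse of $X_q\dashind$ with induction via $\widetilde{X}_q$, this means precisely $[\rho]=(X_q\dashind)^{-1}(\pi)$.

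Since $\langle X_q,X_q\rangle_A$ is an ideal in $A$, the subspace $H_\rho$ is in fact invariant under all of $\psi_e(A)$: the identity $\psi_e(a)\psi_q(y)^*=\psi_q(ya^*)^*$ yields $\psi_e(A)H_\rho\subseteq H_\rho$, so $\rho$ extends uniquely to an irreducible representation of $A$ on $H_\rho$ by restriction of $\psi_e$. Now I apply Lemma~\ref{lemma for imprimitivity} a second time, this time to $X_p$ as a $\K(X_p)$--$\langle X_p,X_p\rangle_A$ imprimitivity bimodule with its canonical representation $(\psi^{(p)},\psi_p,\psi_e|_{\langle X_p,X_p\rangle_A})$. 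If $\rho$ vanishes on $\langle X_p,X_p\rangle_A$ then $X_p\dashind(\rho)=0$, and moreover $\|\psi_p(x)\eta\|^2=\langle\eta,\psi_e(\langle x,x\rangle_p)\eta\rangle=0$ for every $\eta\in H_\rho$, $x\in X_p$, forcing $\psi_p(X_p)H_\rho=\{0\}$ and hence $\pi_p=0$. Otherwise $\rho|_{\langle X_p,X_p\rangle_A}$ is a nonzero, hence irreducible, subrepresentation of $\psi_e|_{\langle X_p,X_p\rangle_A}$ on $H_\rho$, and Lemma~\ref{lemma for imprimitivity} gives that $\psi^{(p)}|_{\psi_p(X_p)H_\rho}$ is irreducible and unitarily equivalent to $X_p\dashind(\rho|_{\langle X_p,X_p\rangle_A})=X_p\dashind(\rho)$. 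Since $\psi_p(X_p)H_\rho=\psi_p(X_p)\psi_q(X_q)^*K$ is the space on which $\pi_p$ acts, the desired equality $[\pi_p]=[X_p\dashind((X_q\dashind)^{-1}(\pi))]$ follows.

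The main technical point will be checking compatibility between the two applications of the imprimitivity lemma, that is, ensuring that passing through the ideals $\langle X_q,X_q\rangle_A$ and $\langle X_p,X_p\rangle_A$ does not alter either the induced representations or their irreducibility. This reduces to two routine observations: the inner products defining the induced tensor products already take values in the relevant ideals, so $X_p\dashind(\rho)=X_p\dashind(\rho|_{\langle X_p,X_p\rangle_A})$; and a nonzero restriction of an irreducible representation of a $C^*$-algebra to an ideal remains irreducible, so the extension/restriction of $\rho$ across the inclusion $\langle X_q,X_q\rangle_A \subseteq A$ preserves irreducibility.
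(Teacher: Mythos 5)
Your proof is correct and follows essentially the same route as the paper: two applications of Lemma \ref{lemma for imprimitivity}, first descending from $\pi\leq\psi^{(q)}$ to an irreducible subrepresentation of $\psi_e$ on $\psi_q(X_q)^*K$ via the dual bimodule $\widetilde X_q$, then ascending via the imprimitivity bimodule $X_p$. Your additional care with the ideals $\langle X_q,X_q\rangle_A$ and $\langle X_p,X_p\rangle_A$ (and the explicit verification that $\psi_p(X_p)H_\rho=\{0\}$ in the degenerate case) only makes explicit what the paper leaves implicit.
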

 \begin{proof} 
The dual $C^*$-correspondence $\widetilde{X}_q$  to $X_q$ is an imprimitivity $\langle X_q,X_q\rangle_A$-$\K(X_q)$-bimodule and $( \psi_e,\widetilde{\psi}_q, \psi^{(q)})$,  where $\widetilde{\psi}_q(\flat(x))=\psi_q(x)^*$, is its representation. Thus, by Lemma \ref{lemma for imprimitivity}, the restriction $\pi_e(a):=\psi_e(a)|_{\psi_q(X_q)^*K}$, $a\in A$, yields an irreducible subrepresentation $\pi_e:A\to \B(\psi_q(X_q)^*K)$  of $\psi_e$ such that $[ \pi_e]=[\widetilde{X}_q\dashind(\pi)]=[(X_q\dashind)^{-1}(\pi)]$. If $\pi_e(\langle X_p,X_p\rangle_A)=0$, then \eqref{in dust representation} is a zero representation. Otherwise  we may apply  Lemma \ref{lemma for imprimitivity} to $\pi_e$ and the representation  $( \psi^{(p)}, \psi_p, \psi_e)$   of the imprimitivity $\K(X_p)$-$\langle X_p,X_p\rangle_A$-bimodule $X_p$. Then  we see that \eqref{in dust representation} yields an irreducible representation  such that $[\pi_p]=[  X_p\dashind (\pi_e)]=[  X_p\dashind ((X_q\dashind)^{-1} (\pi))] $.
 \end{proof}


\section{A uniqueness theorem and simplicity criteria for Cuntz-Pimsner algebras}
\label{A uniqueness theorem and simplicity criteria for Cuntz-Pimsner algebras} 

Throughout this section, we consider a directed, left cancellative semigroup  $P$ and a regular product 
system $X$ over $P$ with coefficients in an arbitrary $C^*$-algebra $A$. 
We recall from Theorem \ref{structure theorem} that the Cuntz-Pimsner algebra $\OO_X$ is graded over the 
enveloping group $G(P)$ with fibers 
$$
(\OO_X)_g=\clsp\{ j_X(x)j_X(y)^*:  x,y\in X, \,\, [d(x),d(y)]=g\}, \qquad g\in G(P). 
$$
Moreover, cf. Remark \ref{remarkable remarks}, $\OO_X$   may be viewed as a full cross-sectional algebra $C^*(\{(\OO_X)_g\}_{g\in G(P)})$ of the Fell bundle $\{(\OO_X)_g\}_{g\in G(P)}$, and the 
\emph{reduced Cuntz-Pimsner algebra}  
$$
\OO_X^r:=C^*_r(\{(\OO_X)_g\}_{g\in G(P)})
$$
is defined as the reduced cross-sectional algebra  of $\{(\OO_X)_g\}_{g\in G(P)}$. 
There exists a  canonical epimorphism 
\begin{equation}\label{amenabilty epimorphism}
\lambda:\OO_X \to  \OO_X^r. 
\end{equation}
This epimorphism may not be injective. However, $\lambda$ is always injective whenever  group $G(P)$ is 
amenable or more generally when  the Fell bundle
  $
  \left\{(\OO_{X})_g\right\}_{g\in G(P)}
  $
   has the   approximation property defined in \cite{Exel}.  

We want to clarify what we mean by a {\em uniqueness theorem} in this context. By now, several conditions 
implying amenability of  the Fell  bundle $\{(\OO_X)_g\}_{g\in G(P)}$ are known. That is, conditions which guarantee the identity $\OO_X= \OO_X^r$, see e.g. \cite{KLQ}, \cite{CLSV}, \cite{Exel}. 
These conditions seem to be independent of  aperiodicity we want to investigate, and thus we decided not 
to assume any  of them. Accordingly, we seek  an intrinsic condition on the product system $X$ (or on the 
dual semigroup $\X$) which would guarantee that every Cuntz-Pimsner representation of $X$ injective on 
the coefficient algebra $A$  generates the $C^*$-algebra  lying in between $\OO_X$ and  $\OO_X^r$. 
Before proceeding further, we summarize a few know facts useful in the aforementioned context. 

\begin{prop}\label{thm:projective property}
Suppose that  $\psi$ is an injective Cuntz--Pimsner representation of a regular product system $X$. If the epimorphism $\lambda$ from \eqref{amenabilty epimorphism} is an isomorphism, then the following 
conditions are equivalent. 
\begin{itemize}
\item[i)] The canonical epimorphism
$
\Pi_\psi:\OO_X\to C^*(\psi(X))$, where  $i_X(x)=\psi(x)$,  $x\in X$, is an isomorphism.
\item[ii)] There is a coaction
$\beta$ of $G=G(P)$ on $C^*(\psi(X))$ such that
$\beta(\psi(x)) = \psi(x) \otimes i_G(d(x))$, $x \in X$.
\item[iii)] There is a conditional expectation $E_\psi$ from $C^*(\psi(X))$  onto 
$$
\FF_\psi= \clsp\{\, \psi(x) \psi(y)^* : x,y\in X, d(x) \sim d(y) \,\},
$$
 vanishing on elements $\psi(x) \psi(y)^*$ with $d(x) \nsim d(y)$, cf. Remark \ref{kernel of Ore}.
\end{itemize}
Not assuming injectivity of $\lambda$, we have implications 
(i) $\Rightarrow$ (ii) $\Rightarrow$ (iii), and (iii) is equivalent to  existence of a unique epimorphism 
$\pi_\psi: C^*(\psi(X))\to \OO_X^r$ such that the following diagram
\begin{equation}\label{uniqueness diagram}
\xymatrix{  \OO_X  \ar@/_2pc/[rr]^{\lambda\,\,} \ar[r]^{\Pi_\psi\,\,}    
&C^*(\psi(X))  \ar[r]^{\pi_\psi}  &   \OO_X^r }
\end{equation}
is commutative.
\end{prop}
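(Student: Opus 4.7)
The plan is to exploit Theorem \ref{structure theorem}, in particular the fact that $\Pi_\psi$ is isometric on each fiber $(\OO_X)_g$, so that the only thing at stake in the implications is which $C^*$-completion of the $*$-algebra $\bigoplus_{g\in G(P)}(\OO_X)_g$ the image $C^*(\psi(X))$ realizes.

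For (i)$\Rightarrow$(ii), recall that the full cross-sectional algebra $C^*(\B)$ of any discrete Fell bundle $\B=\{B_g\}_{g\in G}$ carries a canonical dual coaction $\delta$ of $G$ determined by $\delta(b_g)=b_g\otimes i_G(g)$ on $\bigoplus_g B_g$; this is standard. Applying this to $\B=\{(\OO_X)_g\}_{g\in G(P)}$ and transporting along the isomorphism $\Pi_\psi$, we get a coaction $\beta$ on $C^*(\psi(X))$. Since $j_X(x)\in(\OO_X)_{[d(x),e]}$ by \eqref{Fell bundle fibres}, we obtain $\beta(\psi(x))=\psi(x)\otimes i_G(d(x))$, as required. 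For (ii)$\Rightarrow$(iii), the spectral subspaces $C^*(\psi(X))^\beta_g=\{a:\beta(a)=a\otimes i_G(g)\}$ yield a topological $G(P)$-grading of $C^*(\psi(X))$, and slicing $\beta$ by the canonical tracial state on $C^*(G(P))$ gives a conditional expectation onto the $e$-spectral subspace. It remains to identify this subspace with $\FF_\psi$: since $[d(x),d(y)]=e$ in $G(P)$ precisely when $d(x)\sim_R d(y)$ (cf.\ Remark \ref{kernel of Ore}) and, by Theorem \ref{structure theorem}, $\psi(x)\psi(y)^*$ lies in the $[d(x),d(y)]$-spectral subspace, the identification is immediate and $E_\psi$ kills the off-diagonal terms $\psi(x)\psi(y)^*$ with $d(x)\nsim d(y)$.

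The heart of the argument is the equivalence between (iii) and existence of $\pi_\psi$ fitting diagram \eqref{uniqueness diagram}. For the forward direction, given $E_\psi$, transfer it through $\Pi_\psi$: for any $c=\sum_{g}a_g\in\bigoplus_{g\in G(P)}(\OO_X)_g$ we have $E_\psi(\Pi_\psi(c))=\Pi_\psi(a_e)$, and combining $\|E_\psi\|\le 1$ with the isometric property $\|\Pi_\psi(a_e)\|=\|a_e\|$ from Theorem \ref{structure theorem}(iii) yields the bundle inequality
\[
\|a_e\|\le \|\Pi_\psi(c)\|_{C^*(\psi(X))}.
\]
Thus the $C^*$-norm on $C^*(\psi(X))$ pulled back to $\bigoplus_g(\OO_X)_g$ satisfies \eqref{topological grading inequality}, and by the minimality characterization of the reduced cross-sectional norm (\cite[Theorem 3.3]{Exel}, cited in Subsection 2.4), the identity on $\bigoplus_g(\OO_X)_g$ extends to the required surjection $\pi_\psi:C^*(\psi(X))\to\OO_X^r$; commutativity of \eqref{uniqueness diagram} is automatic on the dense subalgebra. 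Conversely, if such a $\pi_\psi$ exists, compose it with the canonical faithful conditional expectation $E_r:\OO_X^r\to(\OO_X^r)_e$ that every reduced cross-sectional algebra enjoys. Since $\lambda$ is isometric on each fiber and in particular $\lambda|_{(\OO_X)_e}:(\OO_X)_e\to(\OO_X^r)_e$ is an isomorphism, the map $\eta:=\lambda|_{(\OO_X)_e}\circ(\Pi_\psi|_{(\OO_X)_e})^{-1}:\FF_\psi\to(\OO_X^r)_e$ is an isomorphism and coincides with $\pi_\psi|_{\FF_\psi}$ by commutativity of the diagram. Setting $E_\psi:=\eta^{-1}\circ E_r\circ\pi_\psi$ gives the desired expectation.

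Finally, under the standing hypothesis that $\lambda$ is an isomorphism, the implication (iii)$\Rightarrow$(i) follows at once: the existence of $\pi_\psi$ with $\pi_\psi\circ\Pi_\psi=\lambda$ forces $\Pi_\psi$ to be injective, hence an isomorphism, closing the equivalence loop. The main technical hurdle I expect is the clean identification of the $e$-spectral subspace in (ii)$\Rightarrow$(iii) and the careful book-keeping across the two fiber-wise isometries $\Pi_\psi$ and $\lambda$ when producing $E_\psi$ from $\pi_\psi$; everything else reduces to standard Fell-bundle and coaction machinery combined with the structural Theorem \ref{structure theorem}.
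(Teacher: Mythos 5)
Your proposal is correct and follows essentially the same route as the paper's proof: the dual coaction on the full cross-sectional algebra for (i)$\Rightarrow$(ii), spectral-subspace projections for (ii)$\Rightarrow$(iii), Exel's characterization of topologically graded algebras (\cite[Theorem 3.3]{Exel}) for producing $\pi_\psi$ from $E_\psi$, and composition with the canonical expectation on $\OO_X^r$ for the converse. The extra details you supply (the bundle inequality $\|a_e\|\le\|\Pi_\psi(c)\|$ and the explicit closing of the loop (iii)$\Rightarrow$(i) when $\lambda$ is an isomorphism) are exactly what the paper leaves implicit.
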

\begin{proof}
It suffices to  prove the second part of the assertion.  Implication (i) $\Rightarrow$ (ii) is obvious because we know that $\OO_X$ is equipped with the coaction in the prescribed form. Suppose (ii) holds. Using the contractive projections  onto the spectral subspaces for the coaction $\beta$, cf. \cite[Lemma 1.3]{qui:discrete coactions}, 
and the fact that  elements of the form $\psi(x) \psi(y)^*$ span a dense subspace of $C^*(\psi(E))$, Lemma \ref{going forward lemma2}, we get  
$$
[C^*(\psi(X))]^\beta_g=\{c \in  C^*(\psi(X)):\beta(c)= c \otimes i_G(g)\}
=\clsp\{\psi(x) \psi(y)^*: [d(x),d(y)]=g \}.
$$
In particular,  the projection onto $[C^*(\psi(X))]^\beta_e=\FF_\psi$ is the  conditional expectation described in (iii). If we assume (iii), then $\{\Pi_\psi((\OO_X)_g)\}_{g\in G}$ is a Fell bundle which yields a  topological grading of $C^*(\psi(X))$, see \cite[Definition 3.4]{Exel}. Hence by \cite[Theorem 3.3]{Exel} there exists a desired epimorphism $\pi_\psi: C^*(\psi(X))\to \OO_X^r$. Conversely, if such an epimorphism $\pi_\psi: C^*(\psi(X))\to \OO_X^r$ exists, then composing it with the canonical conditional expectation on $\OO_X^r$ one gets the conditional expectation described in (ii).
\end{proof}

The authors  of \cite{CLSV}  call a representation
$\psi \colon X \to B$ possessing the  property described in  part (ii) of  Proposition \ref{thm:projective property}  \emph{gauge-compatible}. For our purposes the property given in part (iii) of  Proposition 
\ref{thm:projective property}  is more relevant, and thus we coin the following definition, 
cf. \cite[Definition 3.4]{Exel}.

\begin{defn}
We say that a representation $\psi \colon X \to B$ of a product system $X$ is \emph{topologically graded} 
if it has the property described in part (iii) of Proposition \ref{thm:projective property}.
\end{defn}

Thus, to conclude our discussion, by \emph{uniqueness theorem} for $\OO_X$ we understand a result which guarantees that for every injective Cuntz-Pimsner covariant representation $\psi$ of $X$ there is a map 
$\pi_\psi$ making the diagram \eqref{uniqueness diagram} commutative. By  Proposition 
\ref{thm:projective property}, this is equivalent to  $\psi$ being topologically graded.  
 We now introduce a dynamical condition which entails such a result.

\begin{defn}\label{topological freeness definition}
We say that a regular product system $X$,  or  the dual semigroup $\{\X_p\}_{p\in P}$, is \emph{topologically aperiodic}  if for each nonempty open set $U\subseteq \SA$,  each finite set  $F\subseteq  P$ and 
element $q\in P$ such that $q\nsim_R p$ for $p \in F$, there exists a $[\pi]\in U$  such that  for a 
certain enumeration of elements of $F=\{p_1,...,p_n\}$  and certain elements $s_1,...,s_n\in P$  
with  $q \leq s_1 \leq  ... \leq s_n$ and    $p_i\leq s_i$   we have    
\begin{equation}\label{topological freeness condition}
[\pi] \notin \X_{q^{-1}s_i}(\X_{p_i^{-1}s_i}^{-1}([\pi])) \qquad\textrm{for all }\,\, i=1,...,n.
\end{equation}
\end{defn}

\begin{rem}
Since  $(P, \leq)$ is a directed preorder, for any $F=\{p_1,...,p_n\}\subseteq  P$ and $q\in P$ there exists an increasing sequence $q \leq s_1 \leq  ... \leq s_n$  such that $p_i\leq s_i$ for all $i=1,...,n$. Therefore the 
essential part of the condition  in Definition \ref{topological freeness definition} is existence of a $[\pi]$ satisfying  \eqref{topological freeness condition}, which a priori  depends on the choice of the sequence  $q \leq s_1 \leq  ... \leq s_n$ and enumeration of elements of $F$.
\end{rem}

\begin{prop}\label{special cases proposition} If condition \eqref{topological freeness condition} holds for a certain sequence $q \leq s_1 \leq  ... \leq s_n$, then it also holds for any 
sequence $q \leq s_1' \leq  ... \leq s_n'$ such that 
$$
 p_i \leq s_i' \leq s_i\quad 
\textrm{ for all } i=1,...,n.
$$
Moreover, we have the following. 
\begin{itemize}
\item[i)] If $(G(P),P)$ is a quasi-lattice ordered group then in Definition \ref{topological freeness definition} 
one can always take 
$$
s_1=p_1\vee q \quad\textrm{ and  } \quad s_i=p_i \vee s_{i-1}\quad  \textrm{ for all }i=2,...,n.
$$
\item[ii)] Topological aperiodicity of $X$ implies that  for any open nonempty  set $U\subseteq \SA$ and 
any finite set  $F\subseteq  P$ such that $p\nsim_R e$ for $p\in F$, there is a 
$[\pi]\in U $ satisfying
\begin{equation}\label{positive aperiodicity}
[\pi] \notin \X_{p}([\pi])  \quad \textrm{ for all } p \in F.
\end{equation}
  If $(P, \leq )$ is  linearly ordered then the  converse implication also holds.
\item[iii)] In the simplest case of a product system $\{X^{\otimes n}\}_{n\in \N}$ arising from a single regular $C^*$-correspondence $X$, the topological aperiodicity  is equivalent to that
 for each $n>0$ set
 $$
 F_n=\{ [\pi] \in \SA:   \pi \in \X^{n}([\pi])\}
 $$
 has empty interior. (In this case we will say that the $C^*$-correspondence $X$ is topologically aperiodic.)
\end{itemize}
\end{prop}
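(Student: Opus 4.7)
I first prove the opening assertion that the condition transfers from a larger admissible sequence to a smaller one. Fix $p,q,s',s\in P$ with $p,q\leq s'\leq s$ and write $s=s'r$. The semigroup property of $\{\X_t\}$ (Corollary \ref{cor on composition}) gives
$$
\X_{q^{-1}s}\circ\X_{p^{-1}s}^{-1}=\X_{q^{-1}s'}\circ\X_r\circ\X_r^{-1}\circ\X_{p^{-1}s'}^{-1}.
$$
Since $\X_r$ is surjective by Proposition \ref{basic properties of the dual maps}, the multivalued identity \eqref{multivalued identity} yields $(\X_r\circ\X_r^{-1})([\sigma])\supseteq\{[\sigma]\}$ for every $[\sigma]\in\SA$. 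Hence $\X_{q^{-1}s'}(\X_{p^{-1}s'}^{-1}([\pi]))\subseteq\X_{q^{-1}s}(\X_{p^{-1}s}^{-1}([\pi]))$, and applied coordinate-wise, the first assertion follows.

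\textbf{Part (i).} If $(G(P),P)$ is quasi-lattice ordered, set $s_1'':=p_1\vee q$ and $s_i'':=p_i\vee s_{i-1}''$ inductively. An easy induction using minimality of joins shows $s_i''\leq s_i$ for any admissible $\{s_i\}$. The monotonicity above then lets us replace $s_i$ by $s_i''$.

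\textbf{Part (ii), forward implication.} Apply topological aperiodicity with $q=e$; the hypothesis $p\nsim_R e$ for $p\in F$ is exactly $p\nsim_R q$. For each $i$, write $s_i=p_it_i$, so that $p_i^{-1}s_i=t_i$. Using the semigroup property and surjectivity of $\X_{t_i}$ exactly as above,
$$
\X_{s_i}(\X_{t_i}^{-1}([\pi]))=\X_{p_i}\bigl(\X_{t_i}\X_{t_i}^{-1}([\pi])\bigr)\supseteq\X_{p_i}([\pi]).
$$
Thus $[\pi]\notin\X_{s_i}(\X_{t_i}^{-1}([\pi]))$ forces $[\pi]\notin\X_{p_i}([\pi])$, which is \eqref{positive aperiodicity}.

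\textbf{Part (ii), converse under a linear order.} Enumerate $F=\{p_1,\ldots,p_n\}$ in nondecreasing order and set $s_i:=\max(p_i,q)$; then $\{s_i\}$ is nondecreasing and majorises $p_i,q$. For each $i$, exactly one of $q^{-1}s_i$ and $p_i^{-1}s_i$ equals $e$, so $\X_{q^{-1}s_i}\X_{p_i^{-1}s_i}^{-1}$ equals either $\X_{r_i}$ with $r_i:=q^{-1}p_i$, or $\X_{r_i}^{-1}$ with $r_i:=p_i^{-1}q$; in either case the condition $[\pi]\notin\X_{q^{-1}s_i}(\X_{p_i^{-1}s_i}^{-1}([\pi]))$ is equivalent to $[\pi]\notin\X_{r_i}([\pi])$ (note that $[\pi]\in f^{-1}([\pi])\iff[\pi]\in f([\pi])$). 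A direct computation with the semigroup law shows that $r_i\sim_R e$ iff $p_i\sim_R q$, so $r_i\nsim_R e$ and $F':=\{r_i\}$ satisfies the hypothesis of \eqref{positive aperiodicity}. Picking the resulting $[\pi]$ closes the argument.

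\textbf{Part (iii).} By Corollary \ref{cor on composition}, $\X_n=\X^n$, so (ii) identifies topological aperiodicity with \eqref{positive aperiodicity}. The forward direction of (iii) is immediate on taking $F=\{n\}$. For the converse, one shows each $F_n$ is closed: this follows from the fact that the graph $\{([\pi],[\sigma]):[\sigma]\in\X^n([\pi])\}$ is closed in $\SA\times\SA$, since $\X^n=\widehat{\phi}_n\circ[X^{\otimes n}\dashind]$ is assembled from a homeomorphism and a continuous dual map (cf.\ Proposition \ref{basic properties of the dual maps}, and its proof extended appropriately). Granting this, a finite union of closed nowhere dense sets is nowhere dense (an elementary induction: $V\setminus F_{n_1}$ is open and nonempty whenever $V$ is, and the inductive step avoids $F_{n_2},\ldots,F_{n_k}$). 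Thus we find $[\pi]\in U\setminus\bigcup_{n\in F}F_n$, giving \eqref{positive aperiodicity}.

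\textbf{Main obstacle.} The delicate point is the converse of (iii), where one needs closedness of $F_n$ rather than mere empty-interior. This rests on continuity of the dual map $\X$ on $\SA$; when $A$ is not liminal, extra care is required in tracking this continuity through Rieffel induction and the dual $\widehat{\phi}$. Elsewhere the arguments are bookkeeping with the semigroup identities for $\{\X_p\}$ and \eqref{multivalued identity}.
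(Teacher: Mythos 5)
Your arguments for the monotonicity statement, for part (i), and for both directions of part (ii) are correct and essentially identical to the paper's: the same telescoping $\X_{q^{-1}s}\circ\X_{p^{-1}s}^{-1}=\X_{q^{-1}s'}\circ\X_r\circ\X_r^{-1}\circ\X_{p^{-1}s'}^{-1}\supseteq\X_{q^{-1}s'}\circ\X_{p^{-1}s'}^{-1}$ via surjectivity and \eqref{multivalued identity}, the same specialization $q=e$ for the forward half of (ii), and the same reduction of the linearly ordered converse to the set $F'$ of elements $p_i^{-1}q$ and $q^{-1}p_i$ (your observation that $[\pi]\in f^{-1}([\pi])\iff[\pi]\in f([\pi])$ and that $r_i\sim_R e$ iff $p_i\sim_R q$ is exactly what is needed there).

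The genuine gap is in the converse of part (iii), and it is precisely the point you flag as the ``main obstacle.'' Your route requires each $F_n$ to be closed, which you try to extract from continuity of $\X^n$. This fails on several counts: Proposition \ref{basic properties of the dual maps} only gives continuity of $\X$ when $A$ is liminal, so in general there is nothing to work with; moreover the paper's notion of continuity for multivalued maps is lower semicontinuity (preimages of open sets are open), which does not yield closed graphs; and $\SA$ carries the Jacobson topology, which is typically non-Hausdorff, so even a genuine continuous single-valued map need not have closed graph. The paper sidesteps all of this with a purely algebraic observation: given a finite $F\subseteq\N\setminus\{0\}$, put $m=\max F$ and $n=m!$, so that every $k\in F$ divides $n$. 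If $[\pi]\in\X^{k}([\pi])$ then $\X^{2k}([\pi])=\bigcup_{[\sigma]\in\X^{k}([\pi])}\X^{k}([\sigma])\supseteq\X^{k}([\pi])\ni[\pi]$, and inductively $[\pi]\in\X^{jk}([\pi])$ for all $j\geq 1$; hence $[\pi]\notin F_n$ already forces $[\pi]\notin\X^{k}([\pi])$ for every $k\in F$. Thus the single hypothesis that $F_{m!}$ has empty interior produces the point required by \eqref{positive aperiodicity}, with no topological input beyond the definition of empty interior. You should replace your closedness-plus-Baire argument with this divisibility trick; as written, your converse of (iii) is only valid under extra hypotheses that the proposition does not assume.
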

\begin{proof}
 Let us notice  that  if $q, p_i \leq s_i' \leq s_i$, then using the semigroup property of $\X$  (Corollary \ref{cor on composition}), surjectivity of mappings $\X_p$, $p \in P$,  (Proposition \ref{basic properties of the dual maps}) and taking into account  \eqref{multivalued identity} we get
\begin{align*}
\X_{q^{-1}s_i}\circ \X_{p_i^{-1}s_i}^{-1}
&=\X_{q^{-1}s_i'} \circ \X_{s_i'^{-1}s_i} \circ (\X_{p_i^{-1}s_i'} \circ\X_{s_i'^{-1}s_i}  )^{-1}
\\
&=\X_{q^{-1}s_i'} \circ \X_{s_i'^{-1}s_i} \circ  \X_{s_i'^{-1}s_i}^{-1} \circ \X_{p_i^{-1}s_i'}^{-1}
\\
&\supseteq  \X_{q^{-1}s_i'}  \circ \X_{p_i^{-1}s_i'}^{-1}.
\end{align*}
Hence $[\pi] \notin \X_{q^{-1}s_i}(\X_{p_i^{-1}s_i}^{-1}([\pi]))$ implies $[\pi] \notin \X_{q^{-1}s_i'} ( \X_{p^{-1}s_i'}^{-1}([\pi]))$. This  proves the initial part of the assertion. 
\\
Ad (i). It follows immediately from what we have just shown.
\\
Ad (ii). If   $F=\{p_1,...,p_n\}\subseteq  P$  and $p\nsim_R e$ for  all $p\in F$, then putting $q=e$ we see that  topological aperiodicity of $X$ implies that for any nonempty open   set $U\subseteq \SA$  there are elements  $s_1,...,s_n\in P$, $p_i\leq s_i$, $i=1,...,n$  and a point $[\pi]\in U$ such that 
$$
[\pi] \notin \X_{q^{-1}s_i}(\X_{p_i^{-1}s_i}^{-1}([\pi]))=\X_{s_i}(\X_{p_i^{-1}s_i}^{-1}([\pi]))  \qquad\textrm{for all }\,\, i=1,...,n.
$$
By the inclusion noticed above we have  
$
\X_{s_i}\circ\X_{p_i^{-1}s_i}^{-1}=\X_{p_i}\circ\X_{p^{-1}s_i}\circ\X_{p_i^{-1}s_i}^{-1}\supseteq \X_{p_i}
$
and thus condition \eqref{positive aperiodicity} follows.

Conversely, suppose $(P, \leq )$ is  linearly ordered and the condition described in (ii) is satisfied. Let  $U\subseteq \SA$ be open and nonempty, $F\subseteq  P$ finite and  $q\in P$ such that  $q\nsim_R p$, for $p \in F$. Enumerating elements of $F=\{p_1,...,p_n\}\subseteq  P$ in a non-increasing order we have  
$$ 
p_1\leq p_2 \leq ... \leq p_{k_0}\leq q \leq p_{k_0+1}\leq ... \leq p_n
$$ 
for certain $k_0\in\{0,1,...,n\}$.  Defining
$$
s_i:=\begin{cases}
q ,& i\leq k_0 \\
p_i & i \geq k_0+1
\end{cases}
$$
we see that  $q \leq s_1 \leq  ... \leq s_n$ and
$$
\X_{q^{-1}s_i}\circ \X_{p_i^{-1}s_i}^{-1}=\begin{cases}
\X_{p_i^{-1}q}^{-1} ,& i\leq k_0 \\
\X_{q^{-1}p_i} & i \geq k_0+1
\end{cases}.
$$
Put $F':=\{ p_i^{-1}q: i=1,...k_0\}\cup \{ q^{-1}p_i: i=k_0+1,...n\}$ and note that   $p\nsim_R e$ 
for all $p\in F'$. Thus we may apply condition described 
in (ii) to $F'$ and then we obtain a $[\pi]\in U$ satisfying \eqref{topological freeness condition}.

Ad (iii). By part (ii) above, topological aperiodicity implies the condition described in (iii). To see the converse, again by part (ii), it suffices to show \eqref{positive aperiodicity} for a finite set $F\subseteq \N\setminus\{0\}$. The latter follows from condition described in (iii) applied to $n=m!$ where $m=\max\{k: k\in  F\}$.
\end{proof}

Now, we are ready to state and prove the main result of the present paper. 
  

 \begin{thm}[Uniqueness theorem]\label{Cuntz-Krieger uniqueness theorem} 
Suppose that a  regular product system $X$ is topologically aperiodic. Then every injective Cuntz-Pimsner 
representation of $X$ is topologically graded.  If the canonical epimorphism $\lambda:\OO_X \to  \OO_X^r$ 
is injective then there is a natural isomorphism
  $$
  \OO_{X}\cong  C^*(\psi(X))
  $$
 for every injective Cuntz-Pimsner representation $\psi$ of $X$. 
\end{thm}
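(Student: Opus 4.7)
The second statement follows from the first once we invoke Proposition \ref{thm:projective property}: if $\psi$ is topologically graded then there exists an epimorphism $\pi_\psi:C^*(\psi(X))\to\OO_X^r$ making diagram \eqref{uniqueness diagram} commute, and assuming $\lambda$ is an isomorphism forces $\Pi_\psi$ and $\pi_\psi$ to be mutually inverse, giving $\OO_X\cong C^*(\psi(X))$. So the essential task is to produce, for every injective Cuntz--Pimsner representation $\psi$, a bounded conditional expectation $E_\psi:C^*(\psi(X))\to\FF_\psi$ killing the $\psi(x)\psi(y)^*$ with $d(x)\nsim d(y)$. By Lemma \ref{going forward lemma2}(iii), a dense $*$-subspace of $C^*(\psi(X))$ consists of elements
$$
a=\psi^{(q)}(S_q)+\sum_{p\in F}\psi_{p,q}(S_{p,q}),\qquad F\subset P\text{ finite},\ q\nsim_R p\ (p\in F),
$$
on which the candidate formula is forced: $E_\psi(a):=\psi^{(q)}(S_q)$. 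Consequently, everything reduces to the single norm estimate
$$
\|\psi^{(q)}(S_q)\|\le \|a\|,
$$
which by standard density arguments will extend $E_\psi$ uniquely to a contractive projection onto $\FF_\psi$ vanishing off the diagonal.

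To establish this estimate, fix $\epsilon>0$. Because $\psi_e$ is injective, Lemma \ref{going forward lemma} gives $\|\psi^{(q)}(S_q)\|=\|S_q\|$, and a GNS/decomposition argument produces an irreducible subrepresentation $\pi_0$ of $\psi_e$ sitting on a subspace $H_{\pi_0}\subset H$ whose induced class $[\pi]=[X_q\dashind(\pi_0)]$, realised on $K:=\psi_q(X_q)H_{\pi_0}$ via Lemma \ref{representation structure lemma2}, detects $\|\psi^{(q)}(S_q)\|$ within $\epsilon$. Enumerate $F=\{p_1,\dots,p_n\}$ and, using that $P$ is directed, pick $q\le s_1\le\dots\le s_n$ with $p_i\le s_i$. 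Since each $[\pi_0]$ lies in an open set of $\SA$ on which $\psi^{(q)}(S_q)$ still has almost maximal norm, topological aperiodicity furnishes a nearby $[\pi_0']$ (detecting the norm up to $2\epsilon$) with
$$
[\pi_0']\notin\X_{q^{-1}s_i}\bigl(\X_{p_i^{-1}s_i}^{-1}([\pi_0'])\bigr)\qquad\text{for all }i=1,\dots,n.
$$

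The hard step is translating this spectral disjointness into a genuine Hilbert-space orthogonality between the pieces $\psi^{(q)}(S_q)\xi$ and $\psi_{p_i,q}(S_{p_i,q})\xi$ for a suitable unit $\xi\in K$. Here Lemma \ref{lemma instead of diagrams} is decisive: the identity
$$
\X_{q^{-1}s_i}\circ\X_{p_i^{-1}s_i}^{-1}=[X_q\dashind^{-1}]\circ\widehat{\iota_q^{s_i}}\circ\widehat{\iota_{p_i}^{s_i}}^{-1}\circ[X_{p_i}\dashind]
$$
combined with Lemma \ref{representation structure lemma2} identifies, via Cuntz--Pimsner covariance, the irreducible $\K(X_{s_i})$-components appearing in the two natural extensions of $K$ and $\psi_{p_i,q}(S_{p_i,q})K$ up to scale $s_i$: one obtains them by pushing $[\pi_0']$ through the left and right sides of the identity, respectively. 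Aperiodicity then forces these sets of irreducible classes to be disjoint for every $i$, so the corresponding $\psi^{(s_i)}$-essential subspaces are mutually orthogonal inside $H$. The nested choice $s_1\le\dots\le s_n$ lets us refine $\xi$ in an iterative fashion, ruling out all $n$ off-diagonal contributions simultaneously and yielding
$$
\|a\xi\|^2\ge\|\psi^{(q)}(S_q)\xi\|^2>\bigl(\|\psi^{(q)}(S_q)\|-2\epsilon\bigr)^2.
$$
The main obstacle is precisely the execution of this last translation: coordinating the iterated nested extensions $\iota_{\bullet}^{s_i}$, the dual-map identity, and the induced/subrepresentation bookkeeping of Lemma \ref{representation structure lemma2}, so that abstract disjointness on $\SA$ becomes actual orthogonality in $H$ uniformly across $i=1,\dots,n$.
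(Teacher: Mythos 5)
Your reduction of the theorem to the single norm estimate $\|\psi^{(q)}(S_q)\|\le\|a\|$ via Proposition \ref{thm:projective property} and Lemma \ref{going forward lemma2}(iii) matches the paper, as does the idea of using lower semicontinuity of $[\pi]\mapsto\|\pi(\cdot)\|$ together with topological aperiodicity to select a good point of $\SA$. But there is a genuine gap, and it sits exactly where you flag "the main obstacle": you never execute the step that turns disjointness on $\SA$ into orthogonality in a Hilbert space, and the framework you set up for it does not work. You propose to find an irreducible \emph{subrepresentation} $\pi_0$ of $\psi_e$ inside the given space $H$ that nearly attains $\|\psi^{(q)}(S_q)\|$, and then to replace it by a "nearby" $[\pi_0']$ supplied by aperiodicity. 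Neither move is available: $\psi_e$ need not have any irreducible subrepresentations (and $\psi$ is a priori a representation in an abstract $C^*$-algebra $B$), and the point $[\pi_0']$ produced by aperiodicity is an abstract class in $\SA$ with no realization as a subrepresentation inside $H$, so there is no unit vector $\xi\in K$ on which to run your orthogonality computation.

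The paper's device, which is the actual content of the proof and is absent from your proposal, is to build a \emph{new} representation $\nu$ of $C^*(\psi(X))$ adapted to the chosen $[\pi]$: one sets $\nu_q(\psi^{(q)}(S)):=X_q\dashind(\pi)(S)$ on $\psi^{(q)}(\K(X_q))$ (legitimate because $\psi^{(q)}$ is injective), then extends it \emph{irreducibly} step by step along the increasing chain $\psi^{(q)}(\K(X_q))\subseteq\psi^{(s_1)}(\K(X_{s_1}))\subseteq\cdots\subseteq\psi^{(s_n)}(\K(X_{s_n}))$, and finally to all of $C^*(\psi(X))$. Irreducibility of each $\nu_{s_i}$ is what makes the argument close: the space $H_{p_i}=\nu(\psi_{p_i}(X_{p_i})\psi_q(X_q)^*)H_q$ is irreducible for $\nu(\psi^{(p_i)}(\K(X_{p_i})))$ by Lemma \ref{representation structure lemma2}, the projection $P_{s_i}$ commutes with this algebra, so $P_{s_i}H_{p_i}$ is either $\{0\}$ or all of $H_{p_i}$; in the second case Lemma \ref{lemma instead of diagrams} yields $[\pi]\in\X_{q^{-1}s_i}(\X_{p_i^{-1}s_i}^{-1}([\pi]))$, contradicting the choice of $[\pi]$. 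Hence $P_q\nu(\psi_{p_i,q}(S_{p_i,q}))P_q=0$ for every $i$, and compressing $\nu(a)$ by $P_q$ gives the estimate. Without the successive-irreducible-extension construction there is no dichotomy "equal or orthogonal" to invoke, so your sketch cannot be completed as written.
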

\begin{proof}
Suppose that $\psi$ is an injective Cuntz-Pimsner representation of $X$ in a $C^*$-algebra $B$. Then 
$\psi^{(p)}:\K(X_p)\to B$ is injective for all $p\in P$. Let us consider an element of the form   
\begin{equation}\label{general form33} 
\psi^{(q)}(S_{q}) + \sum_{p\in F} \psi_{p,q}(S_{p,q}),
\end{equation}
where $q\in P$, $F\subseteq P$ is a finite set such that $q \nsim_R p$ for all $p\in F$, and $S_q\in \K(X_q)$,  $S_{p,q}\in \K(X_q, X_p)$. By Lemma \ref{going forward lemma2} part (iii), such elements form a 
dense subspace of $C^*(\psi(X))$. Thus  existence of the appropriate conditional expectation 
will follow from the inequality 
$$
\|S_q\|=\|\psi^{(q)}(S_{q})\| \leq   \|\psi^{(q)}(S_{q}) + \sum_{p\in F} \psi_{p,q}(S_{p,q})\|.
$$
To prove this inequality,  we fix $\varepsilon >0$ and recall  that for any $a\in A$ the mapping 
$\SA\ni [\pi] \mapsto \|\pi(a)\|$ is lower semicontinuous and attains its maximum equal to $\|a\|$, cf. e.g. \cite[Proposition 3.3.2., Lemma 3.3.6]{Dix:C*-algebras}. Thus, since $X_{q}\dashind:\SA \to \widehat{\K(X)}$ is a homeomorphism, we deduce that   there is an open nonempty set $U\subseteq \SA$ such that 
$$
\|X_{q}\dashind(\pi)(S_{q})\| >  \Vert S_{q} \Vert - \varepsilon \qquad \textrm{for  every } [\pi]\in U.
$$
Let $F=\{p_1,...,p_n\}$. By topological aperiodicity  of $X$,  there are    elements $s_1,...,s_n\in P$ such that 
$q \leq s_1 \leq  ... \leq s_n$ and   $p_i\leq s_i$, $i=1,...,n$,  and  there exists a $[\pi]\in U$ satisfying \eqref{topological freeness condition}.  
Let us fix these objects.

 We recall that if $p\leq s$, then $i_{p}^{s}(\K(X_{p})) \subseteq \K(X_{s})$ and  thus $\psi^{(p)}(\K(X_{s}))\subseteq \psi^{(s)}(\K(X_{s}))$, cf. Lemma \ref{going forward lemma}. In particular, we have the increasing sequence of algebras
 $$
  \psi^{(q)}(\K(X_{q}))\subseteq \psi^{(s_1)}(\K(X_{s_1}))\subseteq ... 
\subseteq \psi^{(s_n)}(\K(X_{s_n})) \subseteq C^*(\psi(X)).
 $$
We construct a relevant  sequence of  representations of these algebras as follows. We put 
 $$
 \nu_q:\psi^{(q)}(\K(X_{q}))\to \B( H_q) \quad \textrm{  defined as  }\quad \nu_q(\psi^{(q)}(S))=X_{q}\dashind(\pi)(S).
 $$
Then $\nu_q$ is  an irreducible representation because so is $\pi$. 
We let $\nu_{s_1}:\psi^{(s_1)}(\K(X_{s_1}))\to \B( H_{s_1})$ to be any irreducible extension of $\nu_q$, and 
for $i=2,3,...,n$ we take $\nu_{s_i}:\psi^{(s_i)}(\K(X_{s_i}))\to \B( H_{s_i})$ to be  any  irreducible 
extension of $\nu_{s_{i-1}}$. Finally, we let   $\nu:C^*(\psi(X))\to \B(H)$ to be any extension of  
$\nu_{s_n}$. In particular, we have
$$
H_q\subseteq H_{s_1}\subseteq  ... \subseteq H_{s_n}\subseteq H.
$$ 
 Let   $P_q\in \B(H)$ be  the projection onto the subspace $H_q$. Clearly 
$$ 
 \|P_q \nu(\psi^{(q)}(S_{q}))P_q\|=\|\nu_q(\psi^{(q)}(S_{q}))\|=\|X_{q}\dashind(\pi)(S_{q})\| \geq \Vert S_{q} \Vert - \varepsilon 
$$
and as $\varepsilon$ is arbitrary we can reduce the proof  to showing that   
\begin{equation}\label{equation to be proved}
P_q \nu(\psi_{p,q}(S_{p,q}))   P_q =0\quad \textrm{ for } p \in F.
\end{equation}
To this end,  we fix a $p_i\in F$. Let $P_{s_i}$ be the projection onto $H_{s_i}$ and  consider  the space 
$$
H_{p_i}:=\nu(\psi_{p_i}(X_{p_i})\psi_q(X_q)^*)H_q.
$$
 We claim that $P_{s_i}H_{p_i}=\{0\}$. Since $H_q\subseteq H_{s_i}$, this implies \eqref{equation to be proved} and finishes the proof. Suppose to the contrary that $P_{s_i}H_{p_i}\neq \{0\}$. By Lemma 
\ref{representation structure lemma2} and  the definitions of $\nu$ and $H_{p_i}$, the mapping  
$$
\K(X_{p_i})\ni S \longrightarrow \nu(\psi^{(p_i)}(S))|_{H_{p_i}}
$$  
is an irreducible representation equivalent to $X_{p_i}\dashind (\pi)$. In particular,  $H_{p_i}$ is  irreducible   
for $\nu(\psi^{(p_i)}(\K(X_{p_i})))$. Since  
$$
\nu(\psi^{(p_i)}(\K(X_{p_i})))\subseteq \nu(\psi^{(s_i)}(\K(X_{s_i})))\,\,\,\textrm{ and }\,\,\, P_{s_i} \in \nu(\psi^{(s_i)}(\K(X_{s_i})))',
$$ 
we see that $P_{s_i}H_{p_i}$ is an irreducible subspace for $\nu(\psi^{(p_i)}(\K(X_{p_i})))$. Thus, since  $H_{p_i}$ and $P_{s_i}H_{p_i}$ are both irreducible subspaces for  $\nu(\psi^{(p_i)}(\K(X_{p_i})))$,  either $H_{p_i}=P_{s_i}H_{p_i}$ or $H_{p_i} \bot P_{s_i}H_{p_i}$. However, (as $P_{s_i}H_{p_i}\neq \{0\}$) the latter is clearly impossible.
Thus  $H_{p_i} \subseteq  H_{s_i}$ and denoting by $\pi_{s_i}$ the  representation 
 $$
\K(X_{s_i})\ni S \stackrel{\pi_{s_i}}{\longrightarrow} \nu(\psi^{(s_i)}(S))|_{H_{s_i}},
$$
 we get $[\pi_{s_i}] \in  \widehat{\iota_{p_i}^{s_i}}^{-1}([ X_{p_i}\dashind (\pi)])$. 
Denoting by $\pi_q$ the  representation 
$$
\K(X_{q})\ni S \to \nu(\psi^{(q)}(S))|_{H_{q}},
$$ 
we have 
$[\pi_{q}]\in \widehat{\iota_{q}^{s_i}}([\pi_{s_i}])$  and $\pi_q=X_{q}\dashind(\pi)$.
Hence we get
$$
[\pi]=[(X_q\dashind)^{-1}(\pi_{q})]\in [X_q\dashind^{-1}](\widehat{i_{q}^{s_i}}([\pi_{s_i}]))
\subseteq  [X_q\dashind^{-1}](\widehat{i_{q}^{s_i}}(\widehat{i_{p_i}^{s_i}}^{-1}([ X_{p_i}\dashind (\pi)]))).
$$
Thereby  in view of Lemma \ref{lemma instead of diagrams} we arrive at
$$
[\pi]\in  \X_{q^{-1}s_i}(\X_{p_i^{-1}s_i}^{-1})([\pi]), 
$$
which contradicts the choice of $\pi$.
\end{proof}

As an  application of Theorem \ref{Cuntz-Krieger uniqueness theorem},  we obtain  simplicity criteria for the reduced Cuntz-Pimsner algebra $\OO_X^r$. To this end, we first introduce the indispensable terminology.

\begin{defn}\label{minimalsystem}
Let $X$ be a regular product system over a semigroup $P$ with coefficients in a $C^*$-algebra $A$. 
We say that an ideal $J$  in $A$ is \emph{$X$-invariant} if and only if for each $p\in P$ the set
$$ 
X^{-1}_p(J):=\{a\in A: \langle X_p, a X_p \rangle_p \subseteq J\}   
$$
is equal to $J$. We say  $X$ is \emph{minimal} if there are no nontrivial $X$-invariant ideals in $A$, that is if for any  ideal $J$ in $A$ we have 
$$
(\forall{p\in P}) \,\, X^{-1}_p(J)= J \,\,\,\implies J=\{0\} \,\, \textrm{ or } J=A.
$$
\end{defn}

\begin{rem}\label{remark on minimality}
When $P=\N$ and $A$ is unital, our Definition \ref{minimalsystem} agrees with 
\cite[Definition 3.7]{Szwajcar} treating the case of a single $C^*$-correspondence, see the discussion 
on page 418 therein. 
\end{rem}

\begin{rem}\label{remark on saturation}
Let $X$ be a regular $C^*$-correspondence. It is well known, cf. for instance  \cite[Proposition 1.3]{ka3}, 
that for any ideal $J$ in $A$  we have
$$
XJ= \{x j:x\in X,\, j\in J\}=\{x\in X: \langle x,y\rangle\in J \textrm{ for all }y\in X\}.
$$
Therefore we see  that 
$$
X^{-1}(J)=\{a\in A:  a X \subseteq  X J\}=\phi^{-1}(\K_J(X)),
$$
where 
$$ \K_J(X):=\clsp\{\Theta_{x,y}: x\in X,y\in XJ\}=\clsp\{\Theta_{x,y}: x,y\in XJ\}
$$ 
is an ideal in $\K(X)$. In particular, we infer that $X^{-1}(J)$ is an ideal and $J$ is $\{X^{\otimes n}\}_{n\in \N}$-invariant if and only if $X^{-1}(J)=J$, in which case we will say that $J$ is $X$-invariant.  
\end{rem}

 \begin{thm}[Simplicity of $\OO_X^r$]\label{simplicity} 
If a regular product system $X$ is topologically aperiodic and minimal, then $\OO_X^r$ is simple.
\end{thm}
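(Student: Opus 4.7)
The plan is to derive simplicity from the Uniqueness Theorem (Theorem \ref{Cuntz-Krieger uniqueness theorem}) together with minimality via the standard dichotomy argument. Let $I\subseteq\OO_X^r$ be an ideal, let $q:\OO_X^r\to\OO_X^r/I$ be the quotient map, and consider the composition $\psi:=q\circ j_X^r:X\to \OO_X^r/I$. Since $j_X^r$ is a Cuntz-Pimsner covariant representation and $q$ is a $*$-homomorphism, $\psi$ is itself a Cuntz-Pimsner representation of $X$, and $C^*(\psi(X))=\OO_X^r/I$. Set $J:=\ker\psi_e\subseteq A$. The goal is to force $I=\{0\}$ or $I=\OO_X^r$.

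The first key step is to show that $J$ is $X$-invariant in the sense of Definition \ref{minimalsystem}. For this I would use Cuntz-Pimsner covariance: the identity $\psi_e(a)=\psi^{(p)}(\phi_p(a))$ combined with injectivity of $\phi_p$ yields $J=\phi_p^{-1}(\ker\psi^{(p)})$. Next I would identify $\ker\psi^{(p)}$ with the ``$J$-ideal'' $\K_J(X_p)$ from Remark \ref{remark on saturation}: the inclusion $\K_J(X_p)\subseteq\ker\psi^{(p)}$ follows from $\psi_e(\langle x,Ty\rangle)=\psi_p(x)^*\psi^{(p)}(T)\psi_p(y)$, while for the reverse inclusion one approximates any $T\in\K(X_p)$ by $\sum_{i,j}\Theta_{e_i,e_i}\,T\,\Theta_{e_j,e_j}$ using an approximate unit and observes that if $\langle X_p,TX_p\rangle_p\subseteq J$ then each summand vanishes under $\psi^{(p)}$. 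Combining these with Remark \ref{remark on saturation} yields $J=\phi_p^{-1}(\K_J(X_p))=X_p^{-1}(J)$ for every $p\in P$.

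By minimality, either $J=\{0\}$ or $J=A$. In the latter case $\psi_e\equiv 0$, and then $\|\psi_p(x)\|^2=\|\psi_e(\langle x,x\rangle_p)\|=0$ forces $\psi_p\equiv 0$ for every $p$; since $\OO_X^r/I$ is generated by $\psi(X)$, this means $\OO_X^r/I=0$, i.e.\ $I=\OO_X^r$. In the former case $\psi$ is injective on $A$, hence $\psi$ is injective on $X$ (the norms on fibers are recovered from $\psi_e$), so Theorem \ref{Cuntz-Krieger uniqueness theorem} applies and $\psi$ is topologically graded. Proposition \ref{thm:projective property} then provides an epimorphism $\pi_\psi:\OO_X^r/I\to\OO_X^r$ with $\pi_\psi\circ\Pi_\psi=\lambda$. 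Since $\Pi_\psi=q\circ\lambda$ and $\lambda$ is surjective, this reads $\pi_\psi\circ q=\mathrm{id}_{\OO_X^r}$, forcing $q$ to be injective and hence $I=\{0\}$.

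The main conceptual obstacle is verifying that $J=\ker\psi_e$ is $X$-invariant in the precise sense of Definition \ref{minimalsystem}; this is where the regularity hypothesis (compactness of $\phi_p$) is crucial, because it gives $\phi_p(a)\in\K(X_p)$ and thus access to the Rieffel correspondence between ideals of $\K(X_p)$ and ideals of $\langle X_p,X_p\rangle_p$. The rest is essentially a clean bookkeeping application of the Uniqueness Theorem and Proposition \ref{thm:projective property}, together with the trivial observation that $\psi_e=0$ propagates to $\psi\equiv 0$ on all fibers.
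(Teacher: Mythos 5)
Your proof is correct and follows essentially the same route as the paper's: pull the ideal $I$ back to $J=\{a\in A: j_X^r(a)\in I\}$, verify $X$-invariance of $J$ via Cuntz--Pimsner covariance and Remark \ref{remark on saturation}, invoke minimality for the dichotomy, and in the case $J=\{0\}$ apply Theorem \ref{Cuntz-Krieger uniqueness theorem} together with the diagram of Proposition \ref{thm:projective property} to get $\pi_\psi\circ q=\mathrm{id}_{\OO_X^r}$. The only (cosmetic) difference is that you establish invariance by identifying $\ker\psi^{(p)}$ with $\K_J(X_p)$, whereas the paper checks the two inclusions $J\subseteq X_p^{-1}(J)$ and $X_p^{-1}(J)\subseteq J$ directly on generators; both arguments rest on the same facts.
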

\begin{proof}
Suppose $I$ is an ideal in $\OO_X^r$ and put $J=(j_X^r)^{-1}(I)\cap A=\{a\in A: j_X^{r}(a)\in I\}$. 
Then $J$ is an ideal in $A$. We claim that $J$  is  $X$-invariant. Indeed, for $p\in P$ we have 
$$
j_X^{r}(\langle X_p, J X_p \rangle_p)=j_X^{r}( X_p)^*  j_X^{r}(J X_p)=j_X^{r}( X_p)^*  j_X^{r}(J)  j_X^{r} (X_p)\subseteq I. 
$$
That is, $\langle X_p, J X_p \rangle_A \subseteq J$ and hence $J\subseteq X^{-1}_p(J)$. 
On the other hand, if $a\in    X^{-1}_p(J)$ then by Remark \ref{remark on saturation} we have  
$$
\phi_p(a)=\sum_{i} \Theta_{x_i, y_i j_i}\,\,\,\textrm{  where } \,\,\,x_i,y_i\in X_p \,\,\textrm{ and } j_i\in J.
$$
Since $j_X^r:X\to \OO_X^r$ is Cuntz-Pimsner covariant, we get
\begin{align*}
j_X^r(a)&={j_X^r}^{(p)}(\phi_p(a))=\sum_{i} {j_X^r}^{(p)}(\Theta_{x_i, y_i j_i})=
\sum_{i} j_X^r(x_i) j_X^r( y_i j_i)^*
\\
&=\sum_{i} j_X^r(x_i) j_X^r(j_i^*)j_X^r(y_i)^*\in I.
\end{align*}
 Thus $X^{-1}_p(J) \subseteq J$ and this proves our claim. In view of minimality of $X$,  either $J=A$ 
or $J=\{0\}$. In the former case, $\OO_X^r=C^*(j_X^r(X))=I$ because $j_X^r(X_p)=j_X^r(AX_p)=j_x^r(A)j_X^r(X_p)\subseteq I$ for each $p\in P$. 
In the latter case, the composition of $j_X^r:X \to  \OO_X^r$ with the quotient map 
$\theta:\OO_X^r \to \OO_X^r/I$ yields a  Cuntz-Pimsner  representation $k_X:=\theta\circ j_X^r$ of $X$ in $\OO_X^r/I$ which is injective on $A$. Thus by Theorem \ref{Cuntz-Krieger uniqueness theorem} 
we have an epimorphism 
$$
\pi_{k_X}:\OO_X^r/I \to \OO_X^r
$$
such that $\pi_{k_X}(q(j_X^r(x))=j_X^r(x)$, $x\in X$. Hence $j_X^r(X)\cap I=\{0\}$ and therefore $I=\{0\}$.
\end{proof}
 
Schweizer  found in \cite{Szwajcar} a necessary and sufficient condition for simplicity of Cuntz-Pimsner algebras associated with single $C^*$-correspondences,   improving similar results of \cite{KPW}. Namely, by \cite[Theorem 3.9]{Szwajcar},  if $X$ is a left essential and full $C^*$-correspondence with coefficients in a unital $C^*$-algebra $A$,  then   $\OO_{X}$ is simple if and only if 
 $X$ is minimal and \emph{nonperiodic}, meaning that $X^{\otimes n}\approx {_A} A_A$ implies $n=0$, where  
$\approx$ denotes  the unitary equivalence of $C^*$-correspondences.
This result  suggests that  topological aperiodicity of a product system $X$ should imply  nonperiodicity of $X$, and this  is indeed the case. 

\begin{prop}
Suppose that $X$ is a topologically aperiodic regular product system over a semigroup $P$ of Ore type. Then 
$X_{p} \approx  X_e$ implies $p\sim_R e$, and if in addition $(G(P),P)$ is a quasi-lattice ordered group, then $
\,\,X_{q^{-1}(p\vee q)} \approx X_{p^{-1}(p\vee q)}$  implies  $p=q$.
\end{prop}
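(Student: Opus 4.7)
The plan is to argue both parts by contradiction, relying on a single observation: unitary equivalence of $C^*$-correspondences forces equality of the associated dual multivalued maps on $\widehat{A}$. Indeed, a bimodule unitary $U\colon X\to Y$ intertwines the left actions (so $\widehat{\phi_X}=\widehat{\phi_Y}$ under the induced homeomorphism $\widehat{\K(X)}\cong\widehat{\K(Y)}$) and produces a natural isomorphism of the induction functors $X\dashind\cong Y\dashind$; composing these gives $\X=\Y$ as multivalued maps on $\widehat{A}$. In particular, since the standard bimodule $X_e={_A}A_A$ yields $\X_e=\mathrm{id}_{\widehat{A}}$, any equivalence $X_p\approx X_e$ forces $\X_p=\mathrm{id}$.

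For the first claim, assume $X_p\approx X_e$ together with $p\nsim_R e$. I would apply Proposition~\ref{special cases proposition}(ii) with $U=\widehat{A}$ and $F=\{p\}$ to obtain $[\pi]\in\widehat{A}$ with $[\pi]\notin \X_p([\pi])$. But the observation above gives $\X_p([\pi])=\{[\pi]\}$, and the contradiction is immediate.

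For the second claim, set $s:=p\vee q$ and suppose $X_{q^{-1}s}\approx X_{p^{-1}s}$ while $p\neq q$. Since $P$ sits inside the group $G(P)$, right cancellation holds, so $p\neq q$ upgrades to $q\nsim_R p$. Invoke topological aperiodicity for $U=\widehat{A}$, $F=\{p\}$, and the distinguished element $q$; by Proposition~\ref{special cases proposition}(i) one may take $s_1=p\vee q=s$, producing $[\pi]\in\widehat{A}$ with
\[
[\pi]\notin \X_{q^{-1}s}\bigl(\X_{p^{-1}s}^{-1}([\pi])\bigr).
\]
The assumed equivalence now replaces $\X_{q^{-1}s}$ by $\X_{p^{-1}s}$ on the right, so the set becomes $\X_{p^{-1}s}(\X_{p^{-1}s}^{-1}([\pi]))$, which by \eqref{multivalued identity} is either empty or contains $[\pi]$. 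Surjectivity of every $\X_r$ (Proposition~\ref{basic properties of the dual maps}) guarantees $\X_{p^{-1}s}^{-1}([\pi])\neq\emptyset$, hence $[\pi]$ belongs to the set, contradicting its choice.

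The only genuinely nontrivial step is the opening claim $X\approx Y\Rightarrow \X=\Y$; once it is in place, both parts reduce to a direct unwinding of Definition~\ref{topological freeness definition}, the surjectivity assertion of Proposition~\ref{basic properties of the dual maps}, and the elementary identity \eqref{multivalued identity} on multivalued inverses.
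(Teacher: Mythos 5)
Your proposal is correct and follows essentially the same route as the paper: the key step in both is that a bimodule unitary $X_p\to X_q$ induces a unitary equivalence of the induced representations, forcing $[\pi]\in\X_p(\X_q^{-1}([\pi]))$ for every $[\pi]$ (you phrase this as $\X_p=\X_q$), after which both claims follow from Proposition~\ref{special cases proposition} parts (i) and (ii) together with surjectivity of the dual maps and the identity \eqref{multivalued identity}.
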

\begin{proof}
In view of Proposition \ref{special cases proposition} parts (i) and (ii), it suffices to note that  $X_p \approx X_q$  implies that  $[\pi]\in \X_{p}(\X_{q}^{-1}([\pi]))$ for all $[\pi]\in \SA$. To this end, let  $V:X_p\to X_q$ be a bimodule unitary implementing the equivalence $X_p \approx X_q$.  Let $[\pi]\in \SA$  be arbitrary and take any  $[\rho]\in \X_{q}^{-1}([\pi])$ (such $\rho$ exists because $\X_q$ is surjective). In other words,  
$[\pi] \preceq [X_q\dashind (\rho)\circ \phi_q]$. Then $V$ gives rise to a unitary map
$$
\widetilde{V}:X_p\otimes_\rho H_\rho \to X_q\otimes_\rho H_\rho, \qquad  \textrm{ such that }\;\;
\widetilde{V}(x\otimes h) = (Vx)\otimes h. 
$$
Indeed, this  follows from the following simple computation: 
\begin{align*} 
\|\sum_{i=1}^n x_i\otimes h_i\|^2
&=\sum_{i,j=1}^n \langle x_i\otimes_\rho h_i, x_j\otimes_\rho h_j\rangle=\sum_{i,j=1}^n \langle  h_i, \rho(\langle x_i, x_j\rangle_A) h_j\rangle
\\
&=\sum_{i,j=1}^n \langle  h_i, \rho(\langle V x_i, V x_j\rangle_A) h_j\rangle=\sum_{i,j=1}^n \langle (Vx_i)\otimes_\rho h_i, (Vx_j)\otimes_\rho h_j\rangle
\\
&=\|\sum_{i=1}^n (Vx_i)\otimes_\rho h_i\|^2, 
\end{align*}
where $x_i \in X_p$, $h_i\in H_\rho$, $i=1,...,n$. 
Since $V$ is a left $A$-module morphism, we see that $\V$ establishes a  unitary equivalence between  $X_p\dashind(\rho)\circ \phi_p$ and $X_q\dashind(\rho)\circ \phi_q$. Hence  we have both $[\pi] \preceq [X_q\dashind (\rho)\circ \phi_q]$ and $[\pi] \preceq [X_p\dashind (\rho)\circ \phi_p]$.   
\end{proof}


\section{Applications and examples}\label{Applications and examples}

In this section, we give several examples and applications of the theory developed above. In particular, 
we discuss algebras associated with saturated Fell bundles, twisted $C^*$-dynamical systems, product systems of topological graphs and the Cuntz algebra $\Qq_\N$. 


\subsection{Product systems of Hilbert bimodules, Fell bundles and dual partial actions}
\label{Product systems of Hilbert bimodules, Fell bundles and dual partial group actions}

In this subsection, we consider a regular product system $X$ over a semigroup $P$ of Ore type, with the additional property that each $C^*$-correspondence $X_p$, $p\in P$, is  a Hilbert bimodule equipped with left $A$-valued inner product ${_p\langle \cdot} ,\cdot \rangle: X_p\times X_p \to A $. We call such an $X$ \emph{regular product system of Hilbert bimodules}. With help of for instance \cite[Proposition 1.11]{kwa-doplicher}, one can show that a regular product system is a product system of Hilbert bimodules if and only if each left action homomorphism $\phi_p: A \to \K(X_p)$ is surjective. In this case,  $\phi_p:A\to \K(X_p)$ is an isomorphism and 
$$
{_p\langle}  x, y \rangle=\phi_p^{-1}(\Theta_{x,y}), \qquad x,y \in X_p.
$$  
The following Proposition \ref{characterization of Hilbert bimodules} gives another characterization of regular 
product systems of Hilbert bimodules in terms of  the Fell bundle structure in $\OO_X$ identified in Theorem \ref{structure theorem} above, cf. \cite[Theorem 5.9]{kwa-doplicher}.

\begin{prop}\label{characterization of Hilbert bimodules}
A regular product system $X$ over a semigroup $P$ of Ore type is a product system of Hilbert bimodules if and only if  the algebra of coefficients $A$ embeds into $\OO_X$ as the core subalgebra $(\OO_X)_{[e,e]}$, that is 
$$
j_X(A)=(\OO_X)_{[e,e]}.
$$
In this case, each space $X_p$ embeds  into $\OO_X$ as the  fiber $(\OO_X)_{[p,e]}$. In particular,  $j_X(X_p)=(\OO_X)_{[p,e]}$, for all $p\in P$, and 
\begin{equation}\label{simple form of fibres}
(\OO_X)_{[p,q]}=\clsp\{j_X(x)j_X(y)^*: x\in X_p, y\in X_q\}, \qquad p,q\in P.
\end{equation}
\end{prop}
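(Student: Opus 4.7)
The plan is to exploit the inductive-limit description of the fibers $(\OO_X)_{[p,q]} = B_{[p,q]} = \varinjlim \K(X_{qr}, X_{pr})$ provided by Theorem \ref{structure theorem}, together with the auxiliary fact (mentioned just before the proposition) that a regular product system consists of Hilbert bimodules precisely when every $\phi_p \colon A \to \K(X_p)$ is surjective. The key mechanism throughout is the Cuntz--Pimsner covariance identity $j_X(a) = j_X^{(p)}(\phi_p(a))$ combined with the isometric embeddings $j_{p,q}\colon \K(X_q,X_p) \to (\OO_X)_{[p,q]}$ coming from Theorem \ref{structure theorem}.

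For the equivalence, the chain of inclusions $j_X(A) \subseteq j_X^{(p)}(\phi_p(A)) \subseteq j_X^{(p)}(\K(X_p)) \subseteq (\OO_X)_{[e,e]}$ holds unconditionally, and by Theorem \ref{structure theorem} the union $\bigcup_{p\in P} j_X^{(p)}(\K(X_p))$ is dense in $(\OO_X)_{[e,e]}$ (note $j_X(A)$ is norm-closed since $j_X|_A$ is an injective $*$-homomorphism). If every $\phi_p$ is surjective, then $j_X(A) = j_X^{(p)}(\K(X_p))$ for all $p$, and the equality $j_X(A) = (\OO_X)_{[e,e]}$ is immediate. Conversely, if $j_X(A) = (\OO_X)_{[e,e]}$, the chain collapses to $j_X^{(p)}(\phi_p(A)) = j_X^{(p)}(\K(X_p))$; since $j_X^{(p)} = j_{p,p}$ is isometric by Theorem \ref{structure theorem}, we conclude $\phi_p(A) = \K(X_p)$.

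For the identification $j_X(X_p) = (\OO_X)_{[p,e]}$, one inclusion is routine: picking a self-adjoint approximate unit $(u_\lambda)\subseteq A$, the identity $j_X(x)j_X(u_\lambda)^* = j_X(xu_\lambda)$ converges to $j_X(x)$, showing $j_X(X_p) \subseteq (\OO_X)_{[p,e]}$. The reverse inclusion is the crux: given generators $j_X(x)j_X(y)^*$ with $x \in X_{pr}$, $y \in X_r$, approximate $x$ by finite sums $\sum x_i'x_i''$ with $x_i' \in X_p$, $x_i'' \in X_r$; then
\[
j_X(x_i' x_i'')j_X(y)^* = j_X(x_i')\, j_X^{(r)}(\Theta_{x_i'',y}) = j_X(x_i')\, j_X\!\left(\phi_r^{-1}(\Theta_{x_i'',y})\right),
\]
where we used $\K(X_r) = \phi_r(A)$ and Cuntz--Pimsner covariance to fold the compact operator into $j_X(A)$, and the resulting element $j_X\big(x_i' \cdot \phi_r^{-1}(\Theta_{x_i'',y})\big)$ lies in $j_X(X_p)$. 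The general formula \eqref{simple form of fibres} is proved by the exact same absorption trick, applied separately on both sides: for $x \in X_{pr}$, $y \in X_{qr}$, factor $x \approx \sum x_i' x_i''$ and $y \approx \sum y_j' y_j''$ with $x_i',y_j'$ in $X_p,X_q$ and $x_i'',y_j'' \in X_r$, then replace the middle factor $j_X^{(r)}(\Theta_{x_i'',y_j''})$ by $j_X(\phi_r^{-1}(\Theta_{x_i'',y_j''}))$ and rewrite the product as $j_X(u_{ij})j_X(v_{ij})^*$ with $u_{ij} \in X_p$, $v_{ij} \in X_q$.

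I do not foresee a serious obstacle beyond bookkeeping: everything rests on the isometric nature of the maps $j_{p,q}$ from Theorem \ref{structure theorem} and on the ability, granted by $\phi_r(A) = \K(X_r)$, to convert a compact operator on $X_r$ sandwiched inside a product $j_X(\cdot)\,(\cdot)\,j_X(\cdot)^*$ back into the action of an element of $A$. The most delicate point is simply ensuring that the approximations by simple tensors in $X_p \otimes X_r \cong X_{pr}$ commute properly with the norm closures defining $(\OO_X)_{[p,q]}$, which is immediate from continuity of multiplication and involution.
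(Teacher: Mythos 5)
Your proof is correct and follows essentially the same route as the paper's: both arguments rest on the observation that surjectivity of the left actions $\phi_r$ forces the connecting maps of the inductive system $\left(\K(X_{qr},X_{pr})\right)_{r\in P}$ to be onto, so each fiber $(\OO_X)_{[p,q]}$ is already exhausted at the first stage, and both converses use the isometric nature of $j_{p,p}=j_X^{(p)}$ to transport the strict inclusion $\phi_p(A)\subsetneq\K(X_p)$ into $\OO_X$. The only difference is cosmetic: the paper invokes Lemma \ref{lemma on tensoring regular correspondences}(ii) (via the identity $\Theta_{z\otimes y_1,x\otimes y_2}=\Theta_{z\phi_Y^{-1}(\Theta_{y_1,y_2}),x}\otimes 1_Y$) at the level of compact operators, whereas you re-derive the same absorption identity at the level of generators $j_X(x)j_X(y)^*$ using Cuntz--Pimsner covariance.
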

\begin{proof} If all the maps $\phi_p:A \to \K(X_p)$ are isomorphisms, it follows from Lemma \ref{lemma on tensoring regular correspondences} part (ii) that all the maps 
$\iota^{pr,qr}_{p,q}:\K(X_q,X_p)\to \K(X_{qr},X_{pr})$ are (Banach space) isomorphisms. Hence  
$$
\underrightarrow{\,\lim\,\,} \K(X_{qr},X_{pr})=\varphi_{p,q}(\K(X_q,X_p))
$$
where $\varphi_{p,q}$  denotes the natural embedding of $\K(X_q,X_p)$ into the inductive limit $\underrightarrow{\,\lim\,\,} \K(X_{qr},X_{pr})$.  As  the isomorphism from Theorem \ref{structure theorem}
sends $j_X(x)j_X(y)^*$ to $\varphi_{p,q}(\Theta_{x,y})$, $x\in X_p, y\in X_q$, we get \eqref{simple form of fibres}. In particular, we have $j_X(A)=(\OO_X)_{[e,e]}$.
\\
Conversely, if we assume that $\phi_p:A \to \K(X_p)$ is not onto for certain $p\in P$. Then 
$$
\varphi_{e,e}(\K(A))=\varphi_{p,p}(\phi_p(A))\subsetneq \varphi_{p,p}(\K(X_p))\subseteq 
\underrightarrow{\,\lim\,\,} \K(X_{r},X_{r}),
$$ 
and hence $j_X(A)\subsetneq (\OO_X)_{[e,e]}$.
\end{proof}

\begin{rem}\label{saturated bundels} 
 If $\{B_g\}_{g\in G}$ is a saturated Fell bundle, \cite{Fell}, i.e.
$$
B_gB_{g^{-1}}=B_e, \qquad \textrm{ for all } g\in G,
$$
 we may treat $X=\bigsqcup_{g\in G} B_g$ as a regular product system  of Hilbert bimodules with the 
structure inherited in an obvious way from $\{B_g\}_{g\in G}$. Then the Fell bundles  
$\{(\OO_X)_g\}_{g\in G}$ and $\{B_g\}_{g\in G}$ coincide. Accordingly, every cross sectional algebra of a saturated Fell bundle admits a natural realization as Cuntz-Pimsner algebra of a regular product system of Hilbert bimodules. Conversely, by Proposition \ref{characterization of Hilbert bimodules}, if $X$ is a regular product system of Hilbert bimodules over a semigroup $P$ of Ore type,  and  each fiber   $X_p$  is  nondegenerate as 
right Hilbert module  (so it is an imprimitivity bimodule), then  the  Fell bundle 
$\{(\OO_X)_{g}\}_{g\in G(P)}$ is saturated.
\end{rem}

Suppose $X$ is a regular product system of Hilbert bimodules over a semigroup $P$ of Ore type. Since all the maps $\phi_p: A \to \K(X_p)$, $p\in P$, are isomorphisms, we infer  from Definition \ref{dual map def}
that the semigroup $\X=\{\X_p\}_{p\in P}$ dual  to $X$  consists of  partial homeomorphisms $\X_p$ with domain $\widehat{\langle X_p, X_p\rangle}_p$ and range $\SA$. We show in Proposition 
\ref{partial dynamical system out of the product system} below that the semigroup $\{\X_p\}_{p\in P}$  
generates a partial action of the enveloping group $G(P)$. 
We recall the  relevant definitions concerning partial actions, cf. e.g. \cite{ELQ}.

\begin{defn}
A  \emph{partial action} of a group $G$ on a topological space $\Omega$ consists of a pair $(\{D_g\}_{g\in G}, \{\theta_g\}_{g\in G}) $, where $D_g$'s  are open subets of $\Omega$ 
and   $\theta_g:D_{g^{-1}}\to D_{g}$ are homeomorphisms such that 
\begin{enumerate}\renewcommand{\theenumi}{PA\arabic{enumi}}
\item $D_e=\Omega$ and $\theta_e=id$,
\item $\theta_t(D_{t^{-1}}\cap D_{s})=D_{t}\cap D_{ts}$,
\item $\theta_s(\theta_t(x))=\theta_{st}(x)$, for  $x\in D_{t^{-1}}\cap D_{t^{-1}s^{-1}}$. 
\end{enumerate} 
The partial action $(\{D_g\}_{g\in G}, \{\theta_g\}_{g\in G})$ is \emph{topologically free} if for every open nonempty  $U\subseteq \Omega$ and finite $F\subseteq G\setminus\{e\}$ there exists $x\in U$ such that 
$x \in  D_{t^{-1}}$ implies $\theta_{t}(x)\neq x$ for all $t\in F$.
\end{defn}

\begin{prop}\label{partial dynamical system out of the product system}
Suppose $X$ is a regular product system of Hilbert bimodules  and the underlying semigroup $P$ is of Ore type.  
The formulas 
$$
D_{[q,p]}:=\X_q(\widehat{\langle X_p, X_p\rangle_p}), 
$$
$$
\X_{[p,q]}([\pi]):=\X_p \X_q^{-1}([\pi]),\qquad [\pi]\in D_{[q,p]}, \,\, p,q\in P,
$$
yield a well defined family of open sets $\{D_g\}_{g\in G(P)}$ and homeomorphisms 
$\X_g:D_{g^{-1}}\to D_{g}$ such that $(\{D_g\}_{g\in G(P)}, \{\X_g\}_{g\in G(P)})$ is a 
partial action of $G(P)$ on $\SA$. Moreover,
\begin{itemize}
\item[i)] $\{X_g\}_{g\in G(P)}$ is a semigroup dual to $\{(\OO_X)_{g}\}_{g\in G(P)}$, where we treat  $\{(\OO_X)_{g}\}_{g\in G(P)}$ as a product system, and  $\X_g$ are viewed as multivalued 
maps on $\SA$ with $\X_g(\SA \setminus D_{g^{-1}})=\{\emptyset\}$. 
\item[ii)] We have the following implication:
\begin{equation}\label{freeness imply aperiodicity}
 (\{D_g\}_{g\in G(P)}, \{\X_g\}_{g\in G(P)}) \textrm{ is topologically free}\,\, \Longrightarrow\,\, X \textrm{ is topologically aperiodic}, 
\end{equation}
and if  $P$ is both left and right Ore (so for instance it is a group or a cancellative abelian semigroup)  
then the above implication is actually an equivalence.
\end{itemize}
\end{prop}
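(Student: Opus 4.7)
The starting observation is that when $X$ is a regular product system of Hilbert bimodules, each $\phi_p\colon A\to\K(X_p)$ is an isomorphism, so by Proposition \ref{basic properties of the dual maps} the dual map $\X_p$ is a genuine homeomorphism from the open set $\widehat{\langle X_p,X_p\rangle_p}\subseteq\SA$ onto $\SA$, with an honest inverse $\X_p^{-1}$. Consequently the semigroup identity $\X_p\X_q=\X_{pq}$ from Corollary \ref{cor on composition} can be manipulated by cancellation as a group-like composition law.

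The first concrete task is to check well-definedness of $\X_{[p,q]}:=\X_p\X_q^{-1}$ and $D_{[q,p]}:=\X_q(\widehat{\langle X_p,X_p\rangle_p})$. If $(p,q)\sim(p',q')$, so that $pr=p'r'$ and $qr=q'r'$ for some $r,r'\in P$, then $\X_p\X_r=\X_{pr}=\X_{p'r'}=\X_{p'}\X_{r'}$ yields $\X_p=\X_{p'}\X_{r'}\X_r^{-1}$ and similarly $\X_q=\X_{q'}\X_{r'}\X_r^{-1}$; substitution and cancellation of $\X_r^{-1}\X_r$ and $\X_{r'}\X_{r'}^{-1}$ forces $\X_p\X_q^{-1}=\X_{p'}\X_{q'}^{-1}$ and the analogous equality for $D_{[q,p]}$. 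Axiom (PA1) is immediate from $\X_e=\mathrm{id}_{\SA}$, and (PA3) reduces, after expressing $s,t,st\in G(P)$ as fractions $pq^{-1}$ with a common denominator given by an upper bound in $P$, to a telescoping of the semigroup law; axiom (PA2) follows formally from (PA3) plus the bijectivity of $\X_t\colon D_{t^{-1}}\to D_t$. Openness of each $D_g$ comes from the fact that it is the homeomorphic image under $\X_q$ of the open set $\widehat{\langle X_p,X_p\rangle_p}$.

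For item (i), Proposition \ref{characterization of Hilbert bimodules} and equation \eqref{simple form of fibres} identify the fiber $(\OO_X)_{[p,q]}$, as a Hilbert $A$-$A$-bimodule over the core $A\cong(\OO_X)_e$, with the tensor product $X_p\otimes_A\widetilde{X_q}$ of $X_p$ with the dual Hilbert bimodule $\widetilde{X_q}$. A direct computation from Definition \ref{dual map def}, using that $\widetilde{X_q}$ is the imprimitivity bimodule inverse to $X_q$, shows that the dual map of $\widetilde{X_q}$ coincides with $\X_q^{-1}$; Corollary \ref{cor on composition} then yields $\widehat{(\OO_X)_{[p,q]}}=\widehat{X_p\otimes\widetilde{X_q}}=\X_p\circ\X_q^{-1}=\X_{[p,q]}$, which is exactly the claim of (i).

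For item (ii), the algebraic observation driving the argument is that, under the Hilbert bimodule hypothesis, Lemma \ref{lemma instead of diagrams} simplifies — via the semigroup factorizations $\X_{q^{-1}s}=\X_q^{-1}\X_s$ and $\X_{p^{-1}s}^{-1}=\X_s^{-1}\X_p$ — to $\X_{q^{-1}s}\circ\X_{p^{-1}s}^{-1}=\X_q^{-1}\X_p=\X_g$, where $g=q^{-1}p\in G(P)$ is viewed as a partial homeomorphism with value $\emptyset$ off $D_{g^{-1}}$. For the forward direction of \eqref{freeness imply aperiodicity}, given $U$, $q\in P$ and a finite $F=\{p_1,\ldots,p_n\}\subseteq P$ with $q\nsim_R p_i$, the group elements $g_i:=q^{-1}p_i$ lie in $G(P)\setminus\{e\}$; topological freeness of the partial action then delivers $[\pi]\in U$ with $\X_{g_i}([\pi])\neq[\pi]$ whenever $[\pi]\in D_{g_i^{-1}}$, while for $[\pi]\notin D_{g_i^{-1}}$ the set $\X_{q^{-1}s_i}(\X_{p_i^{-1}s_i}^{-1}([\pi]))$ is empty — so in either case $[\pi]$ lies outside it, giving topological aperiodicity of $X$. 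For the converse under the two-sided Ore hypothesis, we have $G(P)=P^{-1}P$, and any finite $F\subseteq G(P)\setminus\{e\}$ can be written as $\{q^{-1}p_1,\ldots,q^{-1}p_n\}$ with a common denominator $q$ by using right reversibility of $P$, whereupon $q\nsim_R p_i$ and topological aperiodicity applied to $\{p_i\}$ and $q$ yields precisely the point required by topological freeness. The main technical annoyance throughout will be domain bookkeeping, which however remains manageable because each $\X_p$ is an honest homeomorphism onto $\SA$ and all of the sets in play are open.
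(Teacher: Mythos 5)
Your proof is correct in substance, but it reaches the partial-action axioms by a genuinely different route than the paper. Where the paper verifies well-definedness and (PA3) by explicit tensor-product identifications of induced representations (e.g. $X_{pr}\otimes \widetilde{X}_{qr}\cong X_p\otimes \widetilde{X}_q$, and an invariant-subspace computation for the composition law), you work entirely in the formal calculus of partial bijections: each $\X_p$ is a homeomorphism of its open domain onto all of $\SA$, so $\X_p\X_p^{-1}=\mathrm{id}_{\SA}$, and the semigroup law $\X_p\X_q=\X_{pq}$ can be manipulated by cancellation. This gives a shorter, more transparent verification — in particular, proving $\X_s\X_t\subseteq\X_{st}$ by telescoping over a common middle term and then extracting (PA2) and the exact domain statement of (PA3) formally is a clean standard device — at the price of having to watch domains (note that strictly speaking (PA2) is deduced from the containment $\X_s\X_t\subseteq\X_{st}$ together with $\X_{g^{-1}}=\X_g^{-1}$, and only then does one get (PA3) with the stated domain, so your logical ordering should be adjusted). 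Your item (i) and item (ii) are essentially the paper's arguments: the forward implication in (ii) is the same observation (the composites in the aperiodicity condition are exactly the partial homeomorphisms $\X_{g_i}$, empty off $D_{g_i^{-1}}$), and the converse is the same common-denominator trick via right reversibility, which the paper merely carries out as an explicit induction.

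One caution on the cancellation calculus. The identity $\X_{q^{-1}s}=\X_q^{-1}\X_s$ you invoke is in general only the containment $\X_q^{-1}\X_s\subseteq\X_{q^{-1}s}$ (the left-hand side is $\X_{q^{-1}s}$ restricted to $D(\X_s)$), because $\X_q^{-1}\X_q$ is the identity on $D(\X_q)$ only; likewise the literal composite $\X_q^{-1}\X_p$ can be a proper restriction of $\X_{[q^{-1}s,\,p^{-1}s]}$. What you actually need — and what does hold — is the equality $\X_{q^{-1}s_i}\X_{p_i^{-1}s_i}^{-1}=\X_{g_i}$ with $g_i=[q^{-1}s_i,\,p_i^{-1}s_i]=\iota(q)^{-1}\iota(p_i)$, and this is precisely the well-definedness of $\X_{[\cdot,\cdot]}$ you have already established. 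For that step the clean computation inserts the full identity rather than cancelling a restricted one: $\X_p\X_q^{-1}=\X_p(\X_r\X_r^{-1})\X_q^{-1}=\X_{pr}\X_{qr}^{-1}$, using only $\X_r\X_r^{-1}=\mathrm{id}_{\SA}$. These are bookkeeping corrections rather than gaps; with them your argument is complete.
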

\begin{proof} 
To begin with, let us note that for an  ideal $I$ in $A$ and $p\in P$ we have 
\begin{equation}\label{widehatequation}
\X_p(\widehat{I})=\widehat{{_p\langle} X_p I, X_p\rangle }, \qquad  \X_p^{-1}(\widehat{I})=
\widehat{\langle X_p , IX_p\rangle_p },  
\end{equation}
cf. \cite[Remark 2.3]{kwa}, \cite[Subsection 3.3]{morita}. Now, 
let $[\pi]\in \SA$ and $r\in P$ be arbitrary. Natural representatives of the classes $\X_p \X_q^{-1}([\pi])$ and $\X_{pr} \X_{qr}^{-1}([\pi])$  act  by multiplication  from  the left on the spaces 
$$
X_p\otimes \widetilde{X}_q\otimes_\pi H_\pi, \qquad  X_{pr}\otimes \widetilde{X}_{qr}\otimes_\pi H_\pi,
$$
respectively. The obvious $C^*$-correspondence isomorphisms 
$$X_{pr}\otimes \widetilde{X}_{qr}\cong X_{p}\otimes (X_r\otimes \widetilde{X}_r)\otimes \widetilde{X}_{q}\cong X_{p}\otimes A\otimes \widetilde{X}_{q}\cong X_{p}\otimes \widetilde{X}_{q}$$
yield a unitary equivalence between the aforementioned representations. Hence $\X_p \X_q^{-1}([\pi])=\X_{pr} \X_{qr}^{-1}([\pi])$, and thus $\X_p \X_q^{-1}$ does not depend on the choice of  representatives of $[p,q]$. 
It follows from (\ref{widehatequation}) that the natural domain of $\X_p \X_q^{-1}$ is  
$\X_q(\widehat{\langle X_p, X_p\rangle_p})$ which coincides with the spectrum of ${_q\langle} X_q\langle X_p, X_p\rangle_p,  X_q \rangle $. 
This shows that the formulas above indeed define homeomorphisms $\X_g:D_{g^{-1}}\to D_{g}$, $g\in G(P)$.

Condition (PA1) is  obvious. To show (PA2), let $t=[t_1,t_2]$,  $s=[s_1,s_2]$ and $r\geq t_2,s_1$.  Putting $q=t_1 (t_2^{-1}r)$,  $p=s_2(s_1^{-1}r)$,  we have  $t=[t_1(t_2^{-1}r),t_2(t_2^{-1}r)]=[q,r]$ and $s=[s_1(s_1^{-1}r),s_2(s_1^{-1}r)]=[r,p]$.  Hence
$$
\X_{t}(D_s)= \X_{[q,r]}(D_{[r,p]})=\X_q \X_r^{-1} (\X_r(D_{[e,p]}))=\X_q (D_{[e,p]}\cap D_{[e,r]}).
$$
On the other hand, since $st=[t_1,t_2]\circ [s_1,s_2]=[t_1(t_2^{-1}r), s_2(s_1^{-1}r)]=[q,p]$, we have
$$
D_{ts} \cap D_t =D_{[q,p]}\cap D_{[q,r]}=\X_q ( D_{[e,p]})\cap \X_q ( D_{[e,r]})= \X_q ( D_{[e,p]} \cap D_{[e,r]}).
$$
This proves condition (PA2).

To show  (PA3), let   $t=[t_1,t_2]$,  $s=[s_1,s_2]$,  $r\geq t_1,s_2$ and  $[\pi]\in D_{t^{-1}}\cap D_{t^{-1}s^{-1}}$.
Then a natural representative of $\X_{st}([\pi])=\X_{[s_1,s_2]\circ [t_1,t_2]}([\pi])=\X_{s_1 s_2^{-1}r}\X_{t_2t_1^{-1}r}^{-1}([\pi])$ acts by left multiplication on the space
$$
X_{s_1 s_2^{-1}r}\otimes \widetilde{X}_{t_2t_1^{-1}r}\otimes_\pi H_\pi=X_{s_1} \otimes X_{s_2^{-1}r}\otimes \widetilde{X}_{t_1^{-1}r}\otimes \widetilde{X}_{t_2}\otimes_\pi H_\pi, 
$$
Similarly, a representative of  $\X_{s}(\X_{t}([\pi]))=(\X_{s_1} \circ \X_{s_2}^{-1}\circ \X_{t_2}\circ \X_{t_2}^{-1})([\pi])$ acts by left multiplication on the space
$$
X_{s_1} \otimes \widetilde{X}_{s_2} \otimes X_{t_1}\otimes \widetilde{X}_{t_2}\otimes_\pi H_\pi. 
$$
The latter can be considered an invariant subspace of the former with help of the following 
natural isomorphisms of $C^*$-correspondences:
\begin{align*}
X_{s_1} \otimes \widetilde{X}_{s_2} \otimes X_{t_1}\otimes \widetilde{X}_{t_2} &\cong X_{s_1} \otimes \widetilde{X}_{s_2} \otimes (  X_{r} \otimes \widetilde{X}_{r}) \otimes X_{t_1}\otimes \widetilde{X}_{t_2}
\\
& \cong X_{s_1} \langle X_{s_2}, X_{s_2}\rangle_{s_2} \otimes  X_{s_2^{-1}r} \otimes \widetilde{X}_{t_1^{-1}r}\otimes  \langle X_{t_1},X_{t_1}\rangle_{t_1} \widetilde{X}_{t_2}.
\end{align*}
By the choice of $[\pi]$ and property (PA2), we see that $\X_{s}\X_{t}([\pi])$ is nonzero and thus equals 
$\X_{st}([\pi])$, as irreducible representations have  no non-trivial subrepresentations.

Ad (i). This follows  from our description of $\X_{[p,q]}$ and the form of $\OO_{[p,q]}$ 
given in \eqref{simple form of fibres}.

Ad (ii). Implication \eqref{freeness imply aperiodicity} is straightforward.  For the converse, let us  additionally  assume that $P$ is right cancellative and right reversible (then $P$ is both left and right Ore).  Take any $g_1$,..., $g_{n}\in G(P)\setminus \{[e,e]\}$. Using left reversibility of $P$ we may represent these elements in the form $g_1=[t,r_1]$,..., $g_{n}=[t,r_n]$, where $t, r_1,...,r_n\in P$ and $t\neq r_i$ for $i=1,...,n$. By right reversibility of $P$,  one can inductively  find elements $q_1,...,q_n, p_1',p_2',...,p_n'\in P$ such that 
\begin{align*}
q_1t &=p_1'r_1, \\
q_2q_1t&= p_2'p_1'r_2,\\
&\, ...\\
q_n... q_2 q_1 t&= p_n' ... p_2'p_1'r_n.
\end{align*}
Then defining  
$$
q:=q_n...q_1, \qquad  s:=qt  \quad\textrm{ and }\quad  p_i:=q_n...q_{i+1}p_i' ... p_1' \quad\textrm{ for }\quad i=1,...,n, 
$$
we get   $s=p_ir_i$ and $p_i\neq q$ for $i=1,...,n$. Hence  $q^{-1}s=t$ and  $p_i^{-1}s=r_i$ 
for every $i=1,...,n$. Thus
$$
\X_{g_i}=\X_{[t,r_i]}=\X_t \X_{r_i}^{-1}=\X_{q^{-1}s} \X_{p_i^{-1}s}^{-1}.
$$
Since $\X_{q^{-1}s} \X_{p_i^{-1}s}^{-1}=\X_{[q^{-1}s,p_i^{-1}s]}$ 
does not depend on the choice of $s \geq q, p_i$, we see that the aperiodicity condition applied to $q$ and $p_1,...,p_n$ yields the topological freeness condition for $g_1,...,g_n$.
\end{proof}

We do not know if the converse to implication \eqref{freeness imply aperiodicity} holds  in general, see also Remark \ref{Remark on topological freeness and aperiodicity} below.  Nevertheless, applying Proposition 
\ref{partial dynamical system out of the product system} and Theorems \ref{Cuntz-Krieger uniqueness theorem} 
and \ref{simplicity}, we obtain the following.

\begin{cor} \label{uniqueness theorem and simplicity criterion for cross-sectional algebras}
Suppose  $\{B_{g}\}_{g\in G}$ is a saturated Fell bundle. Treating its fibers as imprimitivity 
Hilbert bimodules over $B_e$, cf. Remark \ref{saturated bundels}, the dual semigroup 
$\{\widehat{B}_{g}\}_{g\in G}$ is a group of genuine homeomorphisms of $\widehat{B}_e$.
\begin{itemize}
\item[i)] The action $\{\widehat{B}_{g}\}_{g\in G}$ is topologically free if and only if the  product system $X=\bigsqcup_{g\in G} B_g$ is topologically aperiodic.  If this is the case, then every $C^*$-norm on  $\bigoplus_{g\in G}B_g$ is topologically graded.
\item[ii)] If the action $\{\widehat{B}_{g}\}_{g\in G}$ is topologically free and has no invariant non-trivial 
open subsets then the reduced cross-sectional $C^*$-algebra $C^*_r(\{B_{g}\}_{g\in G})$ is simple.
\end{itemize} 
\end{cor}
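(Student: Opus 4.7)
My plan is to apply Proposition \ref{partial dynamical system out of the product system}, Theorem \ref{Cuntz-Krieger uniqueness theorem}, and Theorem \ref{simplicity} to the product system $X = \bigsqcup_{g\in G} B_g$, viewing each $B_g$ as an imprimitivity $B_e$-$B_e$-bimodule as in Remark \ref{saturated bundels}. Since the bundle is saturated, $\langle B_g, B_g \rangle_e = B_g^* B_g = B_e$ for every $g$, so Proposition \ref{partial dynamical system out of the product system} gives $D_g = \widehat{B_e}$; hence each $\X_g$ is a genuine homeomorphism of $\widehat{B_e}$ and the partial action is in fact a global $G$-action, as asserted in the preamble of the corollary. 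The equivalence in part (i) between topological freeness of $\{\X_g\}_{g\in G}$ and topological aperiodicity of $X$ is then immediate from Proposition \ref{partial dynamical system out of the product system}(ii), which applies because $G$ is both left and right Ore.

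For the second claim of part (i), let $B$ be the completion of $\bigoplus_{g\in G} B_g$ in any $C^*$-norm. Since $B_e$ already carries a unique $C^*$-norm, the embedding $B_e \hookrightarrow B$ is isometric, and the inclusion $X \hookrightarrow B$ is an injective Cuntz-Pimsner covariant representation: for Hilbert bimodules the left action $\phi_p : B_e \to \K(B_p)$ is an isomorphism whose inverse is the left inner product, so $\psi^{(p)}(\phi_p(a)) = \psi_e(a)$ is automatic upon inverting any rank-one decomposition of $\phi_p(a)$. Theorem \ref{Cuntz-Krieger uniqueness theorem} then yields a conditional expectation from $B$ onto $\overline{B_e}$ vanishing on each $B_g$ with $g \neq e$, which is exactly the desired topological grading.

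For part (ii), Theorem \ref{simplicity} reduces simplicity of $C_r^*(\{B_g\}_{g\in G}) = \OO_X^r$ to topological aperiodicity (supplied by (i)) together with minimality of $X$. The key remaining step is to translate minimality into the absence of nontrivial $G$-invariant open subsets of $\widehat{B_e}$. Using the spectral formula \eqref{widehatequation} from the proof of Proposition \ref{partial dynamical system out of the product system}, the condition $X_p^{-1}(J) = J$ for an ideal $J \subseteq B_e$ is equivalent to $\X_p^{-1}(\widehat{J}) = \widehat{J}$; since $G$ is a group and $\X_{p^{-1}} = \X_p^{-1}$, requiring this for all $p \in G$ amounts to $G$-invariance of the open set $\widehat{J}$, and conversely every $G$-invariant open subset of $\widehat{B_e}$ has this form. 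The main obstacle is essentially this dictionary translation between ideal-theoretic and dynamical invariance; once it is in place, both parts of the corollary follow at once from the cited theorems.
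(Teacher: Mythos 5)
Your proposal is correct and follows essentially the same route the paper intends: the corollary is derived by combining Proposition \ref{partial dynamical system out of the product system} (with $D_g=\widehat{B}_e$ by saturation and the equivalence in part (ii) since a group is both left and right Ore) with Theorems \ref{Cuntz-Krieger uniqueness theorem} and \ref{simplicity}. The only step the paper leaves entirely implicit -- translating minimality of $X$ into the absence of nontrivial $G$-invariant open subsets of $\widehat{B}_e$ via the Rieffel correspondence and \eqref{widehatequation} -- is handled correctly in your argument.
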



\subsection{Crossed products of twisted $C^*$-dynamical systems} 
 
Suppose $\alpha$ is an action of a semigroup $P$ by endomorphisms of $A$ such that  each $\alpha_s$, $s\in P$, extends to a strictly continuous endomorphism $\overline{\alpha}_s$ of the multiplier algebra $M(A)$. 
Let $\omega$ be a circle-valued multiplier on $P$. That is $\omega:P\times P\to  \T$ is such that 
 $$
 \omega(p,q)\omega(pq,r)=\omega(p,qr)\omega(q,r), \qquad p,q,r \in P.
 $$
Then $(A,\alpha, P ,\omega)$ is called a \emph{twisted semigroup $C^*$-dynamical system}. 
A \emph{twisted crossed product} $A \times_{\alpha,\omega} P$, see \cite[Definition 3.1]{F99},  is the universal $C^*$-algebra generated by $\{i_A(a)i_P(s):a\in A, s\in P\}$, where  $(i_A, i_P)$ is a universal  covariant representation of $(A,P,\alpha,\omega)$. That is,  
$i_A :A \to A \times_{\alpha,\omega} P$  is a homomorphism and  $\{ i_P(p) : p \in
P \}$ are  isometries in $M(A \times_{\alpha,\omega} P)$ such that 
$$
i_P(p) i_P(q) = \omega(p,q) i_P(pq) \quad\text{ and }\quad i_P(p) i_A(a) i_P(p)^* = i_A(\alpha_p(a)),
$$
for $p,q \in P$ and $a \in A$. A necessary condition for $i_A$ to be injective is that all endomorphisms $\al_p$, $p\in P$, are injective. We  apply Theorem \ref{structure theorem}  to show that  when $P$ is of Ore type this condition is also sufficient. Additionally, we reveal a natural Fell bundle structure in $A \times_{\alpha,\omega} P$.

Following \cite{F99}, we associate to  $(A,\alpha, P ,\omega)$  a  product system $X=\bigsqcup_{p\in P^{op}} X_p$ over the opposite semigroup $P^{op}$. We equip the linear space $X_p:=\alpha_p(A)A$ with the following $C^*$-correspondence operations
$$
a\cdot  x= \alpha_{p}(a)x, \qquad x\cdot a =x a, \quad \langle  x, y \rangle_{p}=x^*y,
$$
 $a\in A$, $x,y\in X_p$. The multiplication in $X$ is defined  by
 $$
x \cdot y = \overline{\omega(q,p)}\alpha_q(x)y,\quad  \textrm{ for }\,\,x \in X_p=
\alpha_p(A)A\,\,\textrm{ and }\,\,y\in X_q=\alpha_q(A)A .
$$
By \cite[Lemma 3.2]{F99},  $X$ is  a product system and the left action of $A$  on each of its  fibers is  by compacts. Accordingly, $X$ is a regular product system if and only if all the endomorphisms $\al_p$, $p\in P$, are injective.  Moreover, by \cite[Proposition 3.4]{F99} there is an isomorphism 
$$A\rtimes_{\alpha,\omega} P \cong \OO_X$$
given by the mapping  that sends an element $i_P(p)^*i_A(a) \in A\rtimes_{\alpha,\omega} P$ to the image  of the element $a\in X_p=\alpha_p(A)A$ in $\OO_X$.  Using this isomorphism and Theorem \ref{structure theorem} one immediately gets the following. 

\begin{prop}\label{pomocnicze on crossed products}
Suppose  that  $(A,\alpha, P ,\omega)$ is a twisted semigroup $C^*$-dynamical system, where $P$ is of 
Ore type and all the endomorphisms $\al_p$, $p\in P$, are injective. Then the following hold. 
\begin{itemize}
\item[i)] The algebra $A$ embeds via $i_A$ into the crossed product $A\rtimes_{\alpha,\omega} P$.
\item[ii)] The crossed product $A\rtimes_{\alpha,\omega} P$ is naturally graded over the group of fractions $G(P)$ by  the subspaces of the form 
$$
B_g:=\clsp\{i_P(p)^* i_A(a)i_P(q) : a\in \alpha_p(A)A\alpha_q(A),\,\, [p,q]=g\}, \qquad g\in G(P).
$$
Moreover, $A\rtimes_{\alpha,\omega} P$ can be identified with the cross-sectional $C^*$-algebra $C^*(\{B_{g}\}_{g\in G(P)})$. 
\end{itemize}
\end{prop}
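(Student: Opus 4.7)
The plan is to derive both claims by transporting Theorem \ref{structure theorem} across Fowler's isomorphism $A\rtimes_{\alpha,\omega} P \cong \OO_X$, where $X=\bigsqcup_{p\in P^{op}}X_p$ is the product system with $X_p=\alpha_p(A)A$ described just above the proposition. First I would verify that under the standing assumption (all $\alpha_p$ injective), the product system $X$ is regular in the sense of \eqref{standing assumptions}. By \cite[Lemma 3.2]{F99}, each left action $\phi_p\colon A\to\LL(X_p)$ lands in $\K(X_p)$ automatically, so only injectivity of $\phi_p$ needs checking; since $\phi_p(a)x=\alpha_p(a)x$ and the right ideal $X_p=\alpha_p(A)A$ separates $\alpha_p(A)$, the map $\phi_p$ is injective precisely when $\alpha_p$ is. Hence Theorem \ref{structure theorem} applies.

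For part (i), Theorem \ref{structure theorem}(i) tells us that the universal representation $j_X\colon X\to\OO_X$ is injective. Restricting to the canonical fiber $X_e=A$ and tracing through Fowler's identification (which sends $i_P(p)^*i_A(a)\in A\rtimes_{\alpha,\omega}P$ to $j_X(a)\in\OO_X$ for $a\in X_p$), $j_X|_{X_e}$ corresponds exactly to $i_A$. Injectivity of $i_A$ follows immediately.

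For part (ii), I would use Theorem \ref{structure theorem}(ii): $\OO_X$ is graded over $G(P^{op})$ with fibers
$$
(\OO_X)_g=\clsp\{j_X(x)j_X(y)^*:x,y\in X,\ [d(x),d(y)]=g\},
$$
and, by Remark \ref{remarkable remarks}(i), is naturally isomorphic to the full cross-sectional algebra of this Fell bundle. Under Fowler's isomorphism, for $a\in X_p=\alpha_p(A)A$ and $b\in X_q=\alpha_q(A)A$ one computes
$$
j_X(a)j_X(b)^*\,\longleftrightarrow\,i_P(p)^*i_A(a)\,i_A(b)^*i_P(q)=i_P(p)^*i_A(ab^*)i_P(q),
$$
where $ab^*\in\alpha_p(A)A\alpha_q(A)$. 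Taking closed linear spans over all such $a,b$ with $[p,q]=g$ on the left-hand side yields $(\OO_X)_g$, and on the right-hand side yields precisely the space $B_g$ in the statement (after the natural identification of $G(P^{op})$ with $G(P)$ coming from the semigroup anti-isomorphism $P\leftrightarrow P^{op}$). The identification $A\rtimes_{\alpha,\omega}P\cong C^*(\{B_g\}_{g\in G(P)})$ is then just the transport of $\OO_X\cong C^*(\{(\OO_X)_g\}_{g\in G(P^{op})})$.

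The main technical point to handle carefully is the bookkeeping between $P$ and $P^{op}$ (and hence between $G(P)$ and $G(P^{op})$), together with the verification that Fowler's correspondence $i_P(p)^*i_A(a)\leftrightarrow j_X(a)$ intertwines products and involutions in the expected way, so that the computation of $j_X(a)j_X(b)^*$ displayed above is correct and the resulting spectral subspace matches the defining formula for $B_g$. Everything else is a direct application of the structural result already proved.
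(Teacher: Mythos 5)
Your argument is correct and is essentially the paper's own proof: the paper likewise obtains both claims by combining Fowler's isomorphism $A\rtimes_{\alpha,\omega}P\cong\OO_X$ for the product system $X=\bigsqcup_{p\in P^{op}}\alpha_p(A)A$ (regular exactly when all $\alpha_p$ are injective) with Theorem \ref{structure theorem}. Your explicit computation $j_X(a)j_X(b)^*\leftrightarrow i_P(p)^*i_A(ab^*)i_P(q)$ and the $P$ versus $P^{op}$ bookkeeping are precisely the details the paper leaves implicit.
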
 

In the remainder of this subsection we keep the assumptions of Proposition \ref{pomocnicze on crossed products}. 
It is natural to define  a \emph{reduced twisted crossed product} $A \times^r_{\alpha,\omega} P$ to be the
reduced cross-sectional algebra of the Fell bundle $\{B_{g}\}_{g\in G(P)}$. Let $\lambda: A \times_{\alpha,\omega} P \to A \times^r_{\alpha,\omega} P$ be the canonical epimorphism, and 
$$
I_\lambda:=\ker \lambda. 
$$
We wish to generalize the main results of \cite{Arch_Spiel}  to the case of twisted semigroup actions.  
Let $X$ be a product system associated to $(A,\alpha, P ,\omega)$ as above. 
 One can see, cf., for instance, \cite[Example 1.12]{kwa-doplicher}, that a fiber $X_p$, $p\in P$, is a Hilbert bimodule if and only if the range of $\alpha_p$ is a hereditary subalgebra of $A$. If this is the case, then $\alpha_p(A)$ is a corner in $A$:
 $$
\alpha_p(A)=\alpha_p(A)A\alpha_p(A)=\overline{\alpha}_p(1)A \overline{\alpha}_p(1),
$$ and the left inner product in $X_p$ is defined by
$$
{_{p}\langle}  x, y \rangle=\alpha_p^{-1}\left(xy^*\right), \qquad x,y\in X_p=\alpha_p(A)A.
$$ 
The spectrum of $\alpha_p(A)$ can be identified with an open subset  of $\SA$. Then the homeomorphism $\widehat{\alpha}_p: \widehat{\alpha_p(A)}\to \widehat{A}$ dual to  the isomorphism $\alpha_p:A\to\alpha_p(A)$ can be naturally treated as a partial homeomorphism of 
$\SA$, cf. \cite[Definition 2.16]{kwa-interact}. The following Lemma \ref{lemma 5.6} is based on 
\cite[Proposition 2.18]{kwa-interact} dealing with interactions on unital algebras. 

\begin{lem}\label{lemma 5.6} If the monomorphism  $\alpha_p$ has a hereditary range, then the 
homeomorphisms $\widehat{\alpha}_p: \widehat{\alpha_p(A)}\to \widehat{A}$ 
and $\X_p:\widehat{\langle X_p, X_p\rangle_p}\to \widehat{A}$ coincide.
\end{lem}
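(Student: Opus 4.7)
The plan is to unpack both maps on an irreducible $\pi \in \widehat{A}$ with $\pi(\alpha_p(A))\neq 0$ and verify they produce the same class. Two consequences of hereditariness of $\alpha_p(A)$ drive the argument: first, $\alpha_p(A) A \alpha_p(A) = \alpha_p(A)$, so for $x,y\in X_p=\alpha_p(A)A$ the rank-one operator $\Theta_{x,y}$ acts by left multiplication by $xy^*\in \alpha_p(A)$, and $\Theta_{x,y}\mapsto xy^*$ establishes a $*$-isomorphism $\K(X_p)\cong \alpha_p(A)$ under which $\phi_p: A\to \K(X_p)$ becomes $\alpha_p: A \to \alpha_p(A)$; second, the ideal generated by $\alpha_p(A)$ is $A\alpha_p(A)A=\langle X_p, X_p\rangle_p$, so the natural domain of $\X_p$ coincides with the open subset $\widehat{\alpha_p(A)}\subseteq \widehat{A}$ of primitive ideals not containing $\alpha_p(A)$, which is exactly the domain of $\widehat{\alpha}_p$.

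With these identifications, what remains is to show that for each such $\pi$ the representation $X_p\dashind(\pi)$ of $\K(X_p)$, pulled back along $\phi_p=\alpha_p$, is unitarily equivalent to the compression $a\mapsto \pi(\alpha_p(a))|_{\pi(\overline{\alpha}_p(1))H_\pi}$, since the latter is by construction a representative of $\widehat{\alpha}_p([\pi])$. The natural candidate unitary is
\[
U_\pi: X_p \otimes_\pi H_\pi \to \pi(\overline{\alpha}_p(1))H_\pi, \qquad U_\pi(x\otimes h):= \pi(x) h.
\]
Isometry follows from $\langle \pi(x)h, \pi(y)k\rangle = \langle h, \pi(x^*y) k\rangle = \langle h, \pi(\langle x,y\rangle_p) k\rangle$, and surjectivity from $\pi(X_p) H_\pi = \pi(\alpha_p(A)) H_\pi= \pi(\overline{\alpha}_p(1)) H_\pi$ (using nondegeneracy of $\pi$ and its strict extension to $M(A)$). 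The intertwining property $U_\pi \circ X_p\dashind(\pi)(\Theta_{x,y}) = \pi(xy^*) \circ U_\pi$ is then immediate, and composing with $\phi_p=\alpha_p$ yields $\X_p([\pi])=\widehat{\alpha}_p([\pi])$.

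I do not anticipate any genuine obstacle: the argument is essentially bookkeeping once hereditariness is used to realize $X_p$ as the standard Morita equivalence bimodule between $A$ (acting via $\alpha_p$) and the corner $\alpha_p(A)$, with $U_\pi$ being the canonical unitary associated to this Morita context.
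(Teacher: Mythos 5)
Your proposal is correct and follows essentially the same route as the paper: both identify the domain $\widehat{\langle X_p,X_p\rangle_p}$ with $\widehat{\alpha_p(A)}$ and then exhibit the canonical unitary $X_p\otimes_\pi H_\pi \to \pi(\alpha_p(A))H_\pi$, $x\otimes h\mapsto \pi(x)h$, checking isometry via $\langle x,y\rangle_p=x^*y$ and the intertwining of $X_p\dashind(\pi)\circ\phi_p$ with $a\mapsto\pi(\alpha_p(a))$ on the essential subspace. Your extra explicit identification $\K(X_p)\cong\alpha_p(A)$ via $\Theta_{x,y}\mapsto xy^*$ is a harmless (and clarifying) elaboration of what the paper does implicitly.
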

\begin{proof}
With our identifications, we have  
$$
\widehat{\alpha_p(A)}=\{[\pi]\in \SA: \pi(\alpha_p(A))\neq 0\}=\widehat{\langle X_p, X_p\rangle_p}.
$$
Let $\pi:A\to \B(H)$ be   an irreducible representation such that  $\pi(\alpha_p(A))\neq 0$.  Then $\widehat{\alpha}_p([\pi])$ is the equivalence class of the representation $\pi\circ\alpha_p:A\to \B(\pi(\alpha_p(A))H)$. Since $\pi(\alpha_p(A))H=\pi(\alpha_p(A)A)H$ and 
$$
\|\sum_{i} a_i \otimes_\pi h_i\|^2=\|\sum_{i,j} \langle h_i, \pi( a_i^*a_j)h_j\rangle_p\|= \|\sum_{i} \pi(a_i)h_i\|^2,
$$
  $a_i \in X_p=\alpha_p(A)A$, $h_i\in H$,  $i=1,...,n$, we see  that $a\otimes_\pi h \mapsto \pi(a)h $  yields a unitary operator $U:X_p\otimes_{\pi} H \to \pi(\alpha_p(A))H$. Furthermore, for $a\in A$, $b\in \alpha_p(A)$ and $h\in H$ we have
$$
 [X_p\dashind(\pi)(a) U^*] \pi(b)h= X_p\dashind(\pi)(a)\,  b \otimes_\pi h= (\alpha_p(a)b)\otimes_\pi h=  [U^* (\pi\circ \alpha_p)(a)] \pi(b)h.
$$
Hence $U$  intertwines $X_p\dashind$ and  $\pi\circ\alpha_p$. This proves that  $\X_p=\widehat{\alpha}_p$.
\end{proof}

Before stating our criterion of simplicity for semigroup crossed products, we need to define minimality for 
semigroup actions. 

\begin{defn}\label{minimalityforsemigroups}
Let $\alpha$ be an action of a semigroup $P$ on a $C^*$-algebra $A$. We say that $\alpha$ is {\em minimal} 
if for every ideal $J$ in $A$ such that $\alpha_p^{-1}(J)=J$ for  all $p\in P$ 
we have  $J=A$  or $J=\{0\}$. 
\end{defn}

Let us note that if $X$ is the product system associated to a twisted semigroup $C^*$-dynamical system 
$(A,\alpha, P ,\omega)$ then minimality of $\alpha$ in the sense of Definition \ref{minimalityforsemigroups} 
is equivalent to minimality of $X$ in the sense of Definition \ref{minimalsystem}. 

\begin{prop}\label{partial dynamical system out of semigroup of endomorphisms}
Suppose $(A,\alpha, P ,\omega)$ is  a twisted semigroup $C^*$-dynamical system with  $P$ of Ore type. 
We assume that each endomorphism $\alpha_p$, $p\in P$, is injective and has hereditary range.  As above, we regard $\widehat{\alpha}_p$, $p\in P$, as partial homeomorphisms of $\SA$. The formulas 
$$
D_{[q,p]}:=\widehat{\alpha}_q(\widehat{\alpha_p(A)}), \qquad 
\widehat{\alpha}_{[p,q]}([\pi]):=\widehat{\alpha}_p (\widehat{\alpha}_q^{-1}([\pi])),\qquad [\pi]\in D_{[q,p]}, \,\, p,q\in P,
$$
yield a well defined partial action $(\{D_g\}_{g\in G(P)}, \{\widehat{\alpha}_g\}_{g\in G(P)})$ which coincides with the partial action induced by the Fell bundle $\{B_{g}\}_{g\in G(P)}$ described in Proposition \ref{pomocnicze on crossed products} part (ii). Moreover,
$$
(\{D_g\}_{g\in G(P)}, \{\widehat{\alpha}_g\}_{g\in G(P)}) \textrm{ is topologically free } \Longrightarrow\,\, \{\widehat{\alpha}_p\}_{p\in P}\textrm{ is topologically aperiodic},
$$
and 
\begin{itemize}
\item[i)] if  the semigroup  $\{\widehat{\alpha}_p\}_{p\in P}$ is topologically aperiodic, then for any ideal $I$ in $A \times_{\alpha,\omega} P$ such that $I\cap A=\{0\}$ we have $I\subseteq I_{\lambda}$;
\item[ii)] if the semigroup $\{\widehat{\alpha}_p\}_{p\in P}$  is topologically aperiodic and $\alpha$ is 
minimal,  then the reduced twisted crossed product $A\rtimes_{\alpha,\omega}^r P$  is simple.
\end{itemize} 
\end{prop}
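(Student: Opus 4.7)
The plan is to transfer everything to the setting of Proposition \ref{partial dynamical system out of the product system} via the Fowler product system $X = \bigsqcup_{p \in P^{op}} X_p$ with $X_p = \alpha_p(A)A$. Because each $\alpha_p$ is injective with hereditary range, each $X_p$ is an imprimitivity Hilbert bimodule, so $X$ is a regular product system of Hilbert bimodules over $P^{op}$. Lemma \ref{lemma 5.6} provides the crucial identification $\widehat{\alpha}_p = \X_p$ as partial homeomorphisms of $\widehat{A}$, making all results of Proposition \ref{partial dynamical system out of the product system} directly applicable. Combined with the isomorphism $A \rtimes_{\alpha,\omega} P \cong \OO_X$ of Proposition \ref{pomocnicze on crossed products} (which matches $B_g$ with $(\OO_X)_g$), this yields the well-definedness of the partial action $(\{D_g\}, \{\widehat{\alpha}_g\})$, its coincidence with the partial action dual to $\{B_g\}$ (via part (i) of that proposition), and the implication from topological freeness to topological aperiodicity (via implication \eqref{freeness imply aperiodicity}).

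For item (i), my plan is to mimic the strategy used in the proof of Theorem \ref{simplicity}. Let $I$ be an ideal in $A \rtimes_{\alpha,\omega} P$ satisfying $I \cap A = \{0\}$, and let $\theta$ denote the quotient map. Under the identification $A \rtimes_{\alpha,\omega} P \cong \OO_X$, the composition $\psi := \theta \circ j_X$ is a Cuntz-Pimsner representation of $X$ that is injective on $A$. Topological aperiodicity of $\X = \{\widehat{\alpha}_p\}$, together with Theorem \ref{Cuntz-Krieger uniqueness theorem}, then forces $\psi$ to be topologically graded. The resulting factorization through $\OO_X^r \cong A \rtimes_{\alpha,\omega}^r P$ supplied by the diagram \eqref{uniqueness diagram} gives $\lambda = \pi_\psi \circ \theta$, whence $I \subseteq \ker \lambda = I_\lambda$.

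For item (ii), a direct computation yields $X_p^{-1}(J) = \alpha_p^{-1}(J)$ for every ideal $J$ in $A$, so minimality of $\alpha$ in the sense of Definition \ref{minimalityforsemigroups} translates to minimality of $X$ in the sense of Definition \ref{minimalsystem}. Theorem \ref{simplicity} then gives simplicity of $\OO_X^r \cong A \rtimes_{\alpha,\omega}^r P$. The only real obstacle I anticipate is bookkeeping: $X$ lives over $P^{op}$ while the formulas in the statement refer to $G(P)$, so one must carefully transport the partial action across the canonical antiisomorphism $G(P^{op}) \cong G(P)$ and confirm that $\X_{[p,q]}$ matches $\widehat{\alpha}_{[p,q]}$ exactly as written. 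Once this indexing is handled, the analytic content is already entirely contained in the results invoked above.
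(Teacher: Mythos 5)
Your proposal is correct and follows essentially the same route as the paper: identify $A\rtimes_{\alpha,\omega}P$ with $\OO_X$ for the Fowler product system, invoke Lemma \ref{lemma 5.6} to match $\widehat{\alpha}_p$ with $\X_p$, then apply Proposition \ref{partial dynamical system out of the product system} for the partial action and the freeness-implies-aperiodicity implication, and Theorems \ref{Cuntz-Krieger uniqueness theorem} and \ref{simplicity} for (i) and (ii), with your reduction $X_p^{-1}(J)=\alpha_p^{-1}(J)$ being exactly the paper's observation preceding the proposition. The only slip is cosmetic: the fibers $X_p=\alpha_p(A)A$ are Hilbert bimodules (with full left inner product) but need not be imprimitivity bimodules, since $\langle X_p,X_p\rangle_p$ is the ideal generated by $\alpha_p(A)$ and may be proper; this does not affect the argument, as Proposition \ref{partial dynamical system out of the product system} only requires a regular product system of Hilbert bimodules.
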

\begin{proof} With the identification of $A\rtimes_{\alpha,\omega} P$ with $\OO_X$, for each $g\in G(P)$
we have the correspondence between $(\OO_X)_g$ and $B_g$. Thus  Lemma \ref{lemma 5.6} and Proposition \ref{partial dynamical system out of the product system} imply the initial part of the assertion. The 
remaining claims (i) and (ii) follow from Lemma \ref{lemma 5.6} and Theorems \ref{Cuntz-Krieger uniqueness theorem} and \ref{simplicity}.
\end{proof}

\begin{rem}\label{Remark on topological freeness and aperiodicity} 
If $P=G$ is a group,  the multiplier $\omega\equiv 1$ is trivial, and all $\alpha_p$, $p\in P$, are automorphisms, then $A \times_{\alpha,\omega} P=A \times_{\alpha} G$ is the classical crossed product. Then   
parts (i) and (ii) of Proposition \ref{partial dynamical system out of semigroup of endomorphisms} coincide with 
\cite[Theorem 1]{Arch_Spiel} and \cite[Corollary on p. 122]{Arch_Spiel}, respectively. 
More generally, let us suppose that $\omega$ is arbitrary,  $P$ is left Ore semigroup, 
and  $\alpha:P\to \Aut (A)$ is a semigroup action by automorphisms. 
By \cite[Theorem 2.1.1]{Laca}  both the action $\alpha$ and the multiplier $\omega$ extend uniquely to 
the group $G=PP^{-1}$  in such a way that $(A,\alpha, G ,\omega)$ is a twisted group $C^*$-dynamical 
system and  we have a natural isomorphism 
$$
A \times_{\alpha,\omega} P \cong A \times_{\alpha,\omega} G.
$$
Then the partial action of $G$ described in Proposition \ref{partial dynamical system out of semigroup of endomorphisms} is by homeomorphisms and coincides with the standard action $\widehat{\alpha}$ 
of $G$ on $\widehat{A}$. Now, when $P$ is both left and right Ore we can infer from  Proposition 
\ref{partial dynamical system out of the product system} part (ii) that  
$$
\textrm{semigroup  }\{\widehat{\alpha}_p\}_{p\in P}\textrm{ is topologically aperiodic }\,\, \Longleftrightarrow\,\, \textrm{ group  }\{\widehat{\alpha}_g\}_{g\in G} \textrm{ is topologically free.}
$$  
It also follows from  \cite[Theorem 2]{Arch_Spiel} and the implication in Proposition 
\ref{partial dynamical system out of semigroup of endomorphisms} 
 that  the above equivalence holds when $P$ is an arbitrary left Ore semigroup and  $A$ is commutative. 
In fact, in this case both these conditions are equivalent to the intersection property described in  Proposition \ref{partial dynamical system out of semigroup of endomorphisms} part (i). 
\end{rem}


\subsection{Topological graph algebras} 

Let  $E=(E^0,E^1,s,r)$ be a \emph{topological graph} as introduced in \cite{ka1}. This means  we assume that  vertex set $E^0$ and edge set $E^1$ are locally compact Hausdorff spaces,  source map  
$s:E^1\to E^0$ is a local homeomorphism, and range map $r:E^1\to E_0$ is a continuous map. 

A \emph{$C^*$-correspondence $X_E$ of the topological graph }$E$ is  defined in the following manner,  \cite{ka1}. The space
$X_E$ consists of  functions $x\in C_0(E^1)$ for which 
$$
 E^0 \ni v \longmapsto \sum_{\{e\in E^1: s(e) = v\}} |x(e)|^2
$$
belongs to $A := C_0(E^0)$. Then $X_E$ is a $C^*$-correspondence over $A$ with the following structure. 
\begin{align*}
(x\cdot a)(e) &:= x(e)a(s(e)) \ \mbox{ for } e\in E^1,\\
\langle x,y\rangle_A (v) &:= \sum_{\{e\in E^1: s(e) = v\}}
\overline{x(e)}y(e) \ \mbox{ for }v\in E^0, \mbox{ and}\\
(a\cdot x)(e) &:= a(r(e))x(e) \ \mbox{ for } e\in E^1.
\end{align*}
$C^*$-correspondence $X_E$ generates a product system over $\N$. It follows from 
\cite[Proposition 1.24]{ka1} that this product system (or simply, this $C^*$-correspondence $X_E$) is 
regular if and only if 
\begin{equation}\label{equivalents of standing assumptions for graphs}
\overline{r(E^1)}= E^0 \textrm{ and every }v\in E^0 \textrm{ has a neighborhood }V\textrm{ such that }r^{-1}(V) \textrm{ is compact}.
\end{equation}
In particular, \eqref{equivalents of standing assumptions for graphs} holds whenever $r:E^1\to E^0$ is a proper surjection. If both $E^0$ and $E^1$ are discrete then $E$ is just a usual directed graph and then  \eqref{equivalents of standing assumptions for graphs} says that every vertex in $E^0$ receives at least one  and at most finitely many edges (in other words, graph $E$ is row-finite and without sources). 
According to \cite[Definition 2.10]{ka1}, the $C^*$-algebra of $E$ is 
$$
C^*(E):= \OO_{X_E}.$$
Let $e = (e_n, . . ., e_1)$, $r(e_i)=s(e_{i+1})$, $i=1,...,n-1$, be a path in $E$. Then $e$ is  a cycle 
if $r(e_n) = s(e_1)$, and vertex $s(e_1)$  is called the base point of $e$. A cycle $e$  is said to be without 
entries if $r^{-1}(r(e_k)) = e_k$ for all $k = 1, . . ., n$. Graph $E$ is \emph{topologically free}, 
\cite[Definition 5.4]{ka1}, if base points of all cycles without entries in $E$ have empty interiors.   
It is known, see \cite[Theorem 6.14]{ka4}, that topological freeness of $E$ is  equivalent to the 
uniqueness property for $C^*(E)$. 

In general, topological aperiodicity of $X_E$ is stronger than topological freeness of $E$. 
However, when $E=(E^0,E^0,s, id)$ is a graph that comes from a mapping $s:E^0\to E^0$, these two 
notions coincide. 

\begin{prop}\label{topological graphs aperiodicity}
Suppose $X_E$ is a $C^*$-correspondence of a topological graph $E$ satisfying \eqref{equivalents of standing assumptions for graphs}. The dual $C^*$-correspondence acts on $E^0$ (identified with the spectrum of $A=C_0(E^0)$) via the formula 
\begin{equation}\label{dual to graph correspondence}
\X_E(v)=r(s^{-1}(v)).
\end{equation}
In particular,
\begin{itemize}
\item[i)]  $X_E$ is topologically aperiodic if and only if the set of base points for periodic paths in  
$E$ has empty interior; 
\item[ii)] If $r$ is injective, topological aperiodicity of $X_E$ is equivalent to topological freeness of $E$;
\item[iii)] If $E$ is discrete, then $X_E$ is topologically aperiodic if and only if $E$ has no cycles, and this  
in turn  is equivalent (see \cite[Theorem 2.4]{kum-pask-rae}) to  $C^*(E)$ being an AF-algebra.
\end{itemize}
\end{prop}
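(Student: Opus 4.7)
The plan is to first compute the multivalued map $\X_E$ explicitly via the induced representation, and then extract the three items from this formula in conjunction with Proposition \ref{special cases proposition}. To establish the dual formula, fix $v\in E^0$ and the corresponding character $\pi_v$ of $A=C_0(E^0)$. The identity $\langle x,y\rangle_A(v)=\sum_{s(e)=v}\overline{x(e)}y(e)$ shows that $X_E\otimes_{\pi_v}\C$ is naturally isomorphic to $\ell^2(s^{-1}(v))$ via $x\otimes_{\pi_v} 1\mapsto x|_{s^{-1}(v)}$. Under this identification, the induced left action of $a\in A$ becomes pointwise multiplication by the pullback $a\circ r$, so the basis vectors $\delta_e$, $e\in s^{-1}(v)$, are simultaneous eigenvectors with eigenvalues $a(r(e))$. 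Thus $X_E\dashind(\pi_v)\circ\phi$ decomposes as $\bigoplus_{e\in s^{-1}(v)}\pi_{r(e)}$, and the definition $\X_E=\widehat{\phi}\circ [X_E\dashind]$ yields $\X_E(v)=r(s^{-1}(v))$.

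For (i), I would iterate using Corollary \ref{cor on composition}: $\X_E^{n}=\widehat{X_E^{\otimes n}}$, with $X_E^{\otimes n}$ naturally identified with the $C^*$-correspondence of the topological graph $E^n$ of $n$-paths. Writing $s^{(n)}(e_n,\ldots,e_1)=s(e_1)$ and $r^{(n)}(e_n,\ldots,e_1)=r(e_n)$, this gives $\X_E^{n}(v)=r^{(n)}((s^{(n)})^{-1}(v))$, so the set $F_n$ of Proposition \ref{special cases proposition}(iii) coincides with the set of base points of cycles of length $n$. That proposition then reduces topological aperiodicity of $X_E$ to the statement that every $F_n$ has empty interior. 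To bring this into the global form in (i), I would next verify that each $F_n$ is closed: if $v_k\to v$ and each $v_k$ is the base of an $n$-cycle, assumption \eqref{equivalents of standing assumptions for graphs} provides a neighborhood $V$ of $v$ with $r^{-1}(V)$ compact, which allows a step-by-step diagonal extraction of a convergent subsequence of $n$-cycles whose limit is a cycle based at $v$. Since $E^0$ is locally compact Hausdorff and hence Baire, the Baire category theorem converts ``each $F_n$ has empty interior'' into ``$\bigcup_n F_n$ has empty interior'', which is exactly (i).

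Items (ii) and (iii) should then follow as short corollaries of (i). For (ii), injectivity of $r$ forces $r^{-1}(r(e_k))=\{e_k\}$ for every edge, so every cycle in $E$ is automatically without entries and the two base-point sets entering topological aperiodicity and topological freeness coincide. For (iii), in a discrete space the only subset with empty interior is the empty set, so the condition ``each $F_n$ has empty interior'' specializes to ``$E$ has no cycles'', and \cite[Theorem 2.4]{kum-pask-rae} identifies this condition with AF-ness of $C^*(E)$. The main obstacle in the whole argument is the closedness of $F_n$ in the proof of (i); without the local properness of $r$ provided by \eqref{equivalents of standing assumptions for graphs} one could only conclude that the closures $\overline{F_n}$ have empty interior, which is strictly weaker than what (i) asserts.
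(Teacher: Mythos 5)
Your proof is correct and follows essentially the same route as the paper's: the same identification of $X_E\dashind(\pi_v)$ with a direct sum of the point evaluations at $r(e)$, $e\in s^{-1}(v)$, followed by Proposition \ref{special cases proposition}(iii) and the Baire category theorem for (i), with (ii) and (iii) as immediate consequences. Your explicit check that each $F_n$ is closed (using the local properness of $r$ from \eqref{equivalents of standing assumptions for graphs}) supplies the nowhere-density needed to apply Baire, a point the paper leaves implicit.
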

\begin{proof} 
 We identify $\SA$ with $E^0$ by putting $v(a):=a(v)$ for  $v\in E^0$, $a\in A=C_0(E^0)$.  We fix  $v\in E^0$ and an orthonormal basis $\{x_{e}\}_{e\in s^{-1}(v)}$ in  the  Hilbert space $\C^{|s^{-1}(v)|}$.  Let us consider the representation $\pi_v:A\to \B(\C^{|s^{-1}(v)|})$  given by
$$
\pi_v(a)=\sum_{e\in s^{-1}(v)} a(r(e))x_e, \qquad a \in A=C_0(E^0). 
$$ 
One readily checks that the mapping 
$$
X_E\otimes_v \C \ni x\otimes_{v} \lambda \longmapsto \sum_{e\in s^{-1}(v)} \lambda x(e) x_e \in \C^{|s^{-1}(v)|}
$$ 
gives rise to a unitary which establishes    equivalence 
$X_E\dashind (v) \cong \pi_v .
$ 
Furthermore, we have
$$
\{w\in E^{0}: w \leq \pi_v\}=\{w\in E^{0}: w=r(e) \textrm{ for some } e\in s^{-1}(v)\}=r(s^{-1}(v)).
$$
This yields \eqref{dual to graph correspondence}. Claim (i) follows  from \eqref{dual to graph correspondence}, 
part (iii) of Proposition \ref{special cases proposition}  and  the Baire category theorem. Claims (ii) and (iii) are now straightforward.
\end{proof}

\begin{cor}\label{corollary on minimality}
Keeping the assumptions of Proposition \ref{topological graphs aperiodicity}, let $V\subseteq E^0$ be closed. 
Then ideal $J=C_0(E^0\setminus V )$  is   $X_E$-invariant if and only if $\X_E(V)=V$.
\end{cor}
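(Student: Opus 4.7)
The plan is to compute $X_E^{-1}(J)$ explicitly and compare it to $J$. Set $W := E^0 \setminus V$, so that $J = C_0(W)$. By Definition \ref{minimalsystem}, $X_E$-invariance of $J$ amounts to the equality $X_E^{-1}(J) = J$. Using the inner-product formula for $X_E$, an element $a \in C_0(E^0)$ belongs to $X_E^{-1}(J)$ if and only if for every $v \in V$ and every $x, y \in X_E$,
$$\langle x, a \cdot y\rangle_A(v) \;=\; \sum_{e \in s^{-1}(v)} \overline{x(e)}\, a(r(e))\, y(e) \;=\; 0.$$

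Next, I would translate this into a pointwise vanishing condition on $a$. Since $s \colon E^1 \to E^0$ is a local homeomorphism, for any $e_0 \in s^{-1}(V)$ one may choose an open neighborhood $U$ of $e_0$ on which $s$ is injective and pick compactly supported continuous $x, y \in X_E$ with support in $U$ and $x(e_0) = y(e_0) = 1$; then at $v := s(e_0)$ the sum above collapses to the single term $a(r(e_0))$, forcing $a(r(e_0)) = 0$. Conversely, if $a$ vanishes on $r(s^{-1}(V))$, every term of the sum vanishes. Hence
$$X_E^{-1}(J) \;=\; \{\, a \in C_0(E^0) : a|_{r(s^{-1}(V))} = 0 \,\} \;=\; C_0\bigl(E^0 \setminus \overline{r(s^{-1}(V))}\bigr).$$

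To remove the closure I would show that $r(s^{-1}(V))$ is already closed. By condition \eqref{equivalents of standing assumptions for graphs}, every point of $E^0$ has an open neighborhood whose $r$-preimage is compact. A standard subnet argument, passing to a convergent subnet inside the relevant compact preimage and using continuity of $r$, then shows that $r$ sends closed subsets of $E^1$ to closed subsets of $E^0$. Since $s^{-1}(V)$ is closed in $E^1$ by continuity of $s$, the image $r(s^{-1}(V))$ is closed in $E^0$; by formula \eqref{dual to graph correspondence} it equals $\X_E(V)$. Therefore $X_E^{-1}(J) = C_0(E^0 \setminus \X_E(V))$, and this coincides with $J = C_0(E^0 \setminus V)$ if and only if $\X_E(V) = V$.

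The only non-routine step in this outline is the closedness of $r(s^{-1}(V))$, which is what allows us to replace $\overline{\X_E(V)} = V$ by the sharper equality $\X_E(V) = V$; everything else reduces to the inner-product formula and to a bump-function construction made possible by the local homeomorphism property of $s$.
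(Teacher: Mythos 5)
Your argument is correct, but it takes a genuinely different route from the paper. The paper disposes of this corollary in two lines by citing Katsura's description of $X_E$-invariant ideals (conditions (1) and (2) from \cite[Section 2]{ka4}, namely $s(e)\in V\Rightarrow r(e)\in V$ and $v\in V\Rightarrow\exists e\in r^{-1}(v)$ with $s(e)\in V$) and observing that these are exactly the inclusions $\X_E(V)\subseteq V$ and $V\subseteq\X_E(V)$ under formula \eqref{dual to graph correspondence}. You instead compute $X_E^{-1}(J)$ from scratch: the bump-function argument (valid because $s$ is a local homeomorphism and $C_c(E^1)\subseteq X_E$) correctly gives $X_E^{-1}(J)=\{a:a|_{r(s^{-1}(V))}=0\}$, and your properness argument showing $r(s^{-1}(V))$ is closed --- which is exactly where the second half of condition \eqref{equivalents of standing assumptions for graphs} is used --- lets you identify this with $C_0(E^0\setminus\X_E(V))$ and conclude. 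Your reduction of $X_E$-invariance for the product system $\{X_E^{\otimes n}\}_{n\in\N}$ to the single equality $X_E^{-1}(J)=J$ is the content of Remark \ref{remark on saturation} rather than of Definition \ref{minimalsystem} itself, so that reference should be adjusted, but the step is legitimate. What your approach buys is self-containedness (no appeal to \cite{ka4}) and an explicit formula for $X_E^{-1}(J)$; what it costs is the extra topological lemma on closedness of $r(s^{-1}(V))$, which Katsura's pointwise conditions sidestep entirely.
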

\begin{proof}
It is known, see for instance \cite[Section 2]{ka4}, that ideal $J=C_0(E^0\setminus V )$  is   
$X_E$-invariant if and only if $V$ satisfies the following two conditions 
$$
1) \,\,\,(\forall{e\in E^1}) \,\, s(e) \in V \Longrightarrow r(e) \in V,\,\,\quad \textrm{ and }
\quad\,\,2)\,\,\, v\in V  \Longrightarrow (\exists{e\in r^{-1}(v)}) \,\, s(e)\in V.
$$
In view of \eqref{dual to graph correspondence}, conditions (1) and (2) are respectively  equivalent to 
the inclusions $\X_E(V)\subseteq V$ and  $V\subseteq \X_E(V)$.
\end{proof}


\begin{ex}[Exel's crossed product for a proper local homeomorphism]\label{Exel-Vershik}
  
Let $A=C_0(M)$ for a locally compact Hausdorff space $M$ and let $\al:A\to A$ be the operator of composition with a proper surjective local homeomorphism $\sigma:M\to M$. Then $\al$ is an extendible monomorphism possessing a natural left inverse transfer operator   $L:A\to A$, defined by 
$$
L( a)(t)=\frac{1}{|\sigma^{-1}(t)|}\sum_{s\in\sigma^{-1}(t)}a(s),
$$
see \cite[Subsection 2.1]{brv}. Let $X_L$ be the $C^*$-correspondence with coefficients in $A$, 
constructed as follows. $X_L$ is the completion of $A$ with respect to the norm given by the inner-product below, and with the following structure: 
$$
x\cdot a=x\alpha(a),\qquad  \langle x,y\rangle=L( x^*y),\qquad a\cdot x=ax ,
$$
where $a\in A$, $x,y\in X_L$. Clearly, the left action of $A$ on $X_L$ is injective. One can also show that it is 
by compacts, see the argument preceding \cite[Corollary 4.2]{brv}. Hence $X_L$ is a regular 
$C^*$-correspondence. It is known that is naturally isomorphic to a $C^*$-correspondence associated to the topological graph $E=(M,M,\sigma,id)$, \cite[Section 6]{brv}. Thus, by Proposition \ref{topological graphs aperiodicity}, the dual $C^*$-correspondence to $X_L$  acts on $M$, identified with the spectrum of 
$A=C_0(M)$, via the formula 
\begin{equation}\label{dual to graph correspondence2}
\X_L(t)=\sigma^{-1}(t). 
\end{equation}

It is observed in  \cite{brv} that 
$$
C_0(M)\rtimes_{\alpha,L}\N:=\OO_{X_L}
$$ 
is a natural candidate for Exel's crossed product when $A=C_0(M)$ is non-unital.  
When $M$ is compact, $C(M)\rtimes_{\alpha,L}\N$ coincides with the  crossed product introduced in \cite{exel2} and can be effectively described  in terms of generators and relations, \cite[Theorem 9.2]{exel_vershik}. 

Now, combining Proposition \ref{topological graphs aperiodicity}, \cite[Lemma 6.2]{brv} and 
\cite[Theorem 6.14]{ka4}, we see that the following conditions are equivalent. 
\begin{itemize}
\item[i)] $X_L$ is topologically aperiodic; 
\item[ii)] the set of periodic points of $\sigma$ has empty interior;
\item[iii)] $\sigma$ is topologically free in the sense of Exel and Vershik \cite[Definition 10.1]{exel_vershik},  \cite{brv};
\item[iv)] every non-trivial ideal in $C_0(M)\rtimes_{\alpha,L}\N$ intersects $C_0(M)$ non-trivially.
\end{itemize}
Consequently, in view of Corollary \ref{corollary on minimality}, the crossed product $C_0(M)\rtimes_{\alpha,L}\N$ is simple if and only if in addition to the above equivalent conditions 
there is no nontrivial closed subset $Y$ of $M$ such that $\sigma^{-1}(Y)=Y$, cf. \cite[Theorem 6.4]{brv}, \cite[Theorem 11.2]{exel_vershik}, see also \cite{CarSilve} and \cite{Stam}.
\end{ex}


\subsection{$C^*$-algebras of topological $P$-graphs}
\label{product system of topological graphs over P}
In this subsection, we  introduce topological $P$-graphs which generalize both  topological $k$-graphs \cite{yeend} and (discrete) $P$-graphs \cite{Raeburn_Sims}, \cite{BSV}.
Within the framework of a general approach to product systems proposed in \cite{fs},  
the reasoning in \cite[Example 1.5 (4)]{fs} shows that a topological $P$-graph defined below is simply  
a product system over $P$ with values in a groupoid of topological graphs, see  \cite[Definition 1.1]{fs}.
In the sequel $P$ is a  semigroup of Ore type. We treat elements of $P$ as morphisms in 
a category with single object $e$. 
\begin{defn} 
By a \emph{topological $P$-graph} we mean a pair $( \Lambda, d )$ consisting of: 
\begin{itemize}
\item[(1)] a small category $\Lambda$ endowed with a second countable
locally compact Hausdorff topology under which the composition map
is continuous and open, the range map $r$ is continuous and the source
map $s$ is a local homeomorphism; 
\item[(2)] a continuous functor $d \colon \Lambda\to P$, called degree map, satisfying the
factorization property: if $d(\lambda) = pq$ then there exist
unique $\mu,\nu$ with $d(\mu) = p$, $d(\nu) =q $ and $\lambda = \mu\nu$.
\end{itemize}
\end{defn}
Elements (morphisms) of $\Lambda$ are called paths.  $\Lambda^{p}:= d^{-1} (p)$ stands for the 
set of paths of degree $p \in P$.  Paths of degree $e$ are called vertices. 

We associate to a topological $P$-graph  $(\Lambda, d )$ a product system in the same manner as it is done for topological $k$-rank graphs in \cite{CLSV}. That is, for each $p \in P$ we let $X_{p}=X_{E_p}$ be the 
standard $C^*$-correspondence associated to the topological graph 
$$
E_p=( \Lambda^{e}, \Lambda^{p}, s |_{\Lambda^{p}}, r |_{\Lambda^{p}} ),
$$
so that $A := C_{0} ( \Lambda^{e} )$ and  $X_p$ is the completion of the  pre-Hilbert $A$-module $C_c(\Lambda^p)$  with the structure 
$$
\langle f, g \rangle_{p} ( v ) = \sum_{\eta \in \Lambda^{p}(v)} \overline{f ( \eta )} g (\eta)
\quad\text{ and }\quad ( a \cdot f \cdot b) ( \lambda ) = a (r(\lambda)) f(\lambda) b(s(\lambda)).
$$
The proof of  \cite[Proposition 5.9]{CLSV} works in our more general setting and shows that  the formula 
$$
(fg)(\lambda) := f(\lambda (e,p))g(\lambda (p,pq))
$$
defines a  product  $X_{p}\times X_{q} \ni (f,g) \to fg \in X_{pq}$ that makes 
$X=\bigsqcup_{p\in P} X_p$ into a product system. In view of 
\eqref{equivalents of standing assumptions for graphs}, we see that the product system $X$ is regular 
if and only if for every $p\in P$ we have 
\begin{align*}
 & \overline{r(\Lambda^p)} = \Lambda^0, \textrm{ and } \\
 & \text{every } v\in E^0 \textrm{ has a neighborhood }
V\textrm{ such that }r^{-1}(V)\cap \Lambda^p \textrm{ is compact in }\Lambda^p,
\end{align*}
If the above condition holds, we  say that the \emph{topological $P$-graph $(\Lambda,d)$ is regular}. It follows from \cite[Theorem 5.20]{CLSV}  that if $(\Lambda,d)$ is a regular topological $k$-rank graph (that is, if $P=\N^k$), then the Cuntz-Krieger  algebra of $(\Lambda,d)$ defined in \cite{yeend} coincides with $\OO_X$. Hence it is natural to coin the following definitions, see also Remark \ref{remark on P-graph algebras} below. 

\begin{defn}
Suppose $(\Lambda,d)$ is a regular topological $P$-graph, where $P$ is a semigroup of Ore type.   We define  a \emph{$C^*$-algebra} $C^*(\Lambda,d)$  and a \emph{reduced $C^*$-algebra} $C^*_r(\Lambda,d)$ of $(\Lambda,d)$ to be respectively the Cuntz-Pimsner algebra $\OO_X$  and the reduced Cuntz-Pimsner algebra $\OO_X^r$, where $X$ is the regular product system defined above.
 \end{defn}
\begin{rem}\label{remark on P-graph algebras}
If $\Lambda$ is a discrete space then $C^*(\Lambda,d)$ is a universal $C^*$-algebra generated by partial isometries $\{s_\lambda: \lambda\in \Lambda\}$ subject to a natural version of Cuntz-Krieger relations, see \cite[Theorem 4.2]{Raeburn_Sims}. If we additionally assume $(G,P)$ is a quasi-lattice ordered group then $C^*_r(\Lambda,d)$ coincides with the co-universal $C^*$-algebra $C^*_{min}(\Lambda)$ associated to $(\Lambda,d)$ in \cite{BSV}. To see the latter combine  \cite[Proposition 6.4]{Raeburn_Sims},  \cite[Theorem 5.3]{BSV}, \cite[Theorem 4.1]{CLSV} and  \cite[Corollary 5.2]{SY}.
\end{rem}
As an application of our  main results --- Theorems \ref{structure theorem}, 
\ref{Cuntz-Krieger uniqueness theorem}, \ref{simplicity}, we obtain the following. 

\begin{prop}\label{topological graphs aperiodicity343}
Suppose $(\Lambda,d)$  is a regular topological $P$-graph. The $C^*$-algebras $C^*(\Lambda,d)$  and $C^*_r(\Lambda,d)$ are non-degenerate in the sense that they are generated by the images of injective Cuntz-Pimsner representations of  $X=\bigsqcup_{p\in P} X_p$. Moreover, 
\begin{itemize}
\item[i)] $X$ is topologically aperiodic if and only if  for every nonempty open set $U\subseteq \Lambda^e$, each finite set  $F\subseteq  P$ and an element $q\in P$ with $q\nsim_R p$ for all $p \in F$, there is an   
enumeration $\{p_1,...,p_n\}$ of elements of $F$  and there are elements $s_1,...,s_n\in P$ such 
that $q \leq s_1 \leq  ... \leq s_n$, $p_i\leq s_i$, for $i=1,...,n$, and the union  
\begin{equation}\label{topological freeness condition2}
\bigcup_{i=1}^n\{v\in \Lambda^{e}: \mu \in \Lambda^{p_i^{-1}s_i},  \nu \in \Lambda^{q^{-1}s_i},\,\, s(\mu)=s(\nu)  \textrm{ and } r(\mu)=r(\nu)=v\} 
\end{equation}
 does not contain $U$.
 \item[ii)] $X$ is minimal if and only if  there is no nontrivial closed set $V\subseteq \Lambda^e$ such that 
\begin{equation}\label{invariant sets for P-graphs}
  r(\Lambda^p \cap s^{-1}(V))=V\quad \textrm{ for all } p\in P.
\end{equation}
\end{itemize}
In particular, if the equivalent conditions in (i) hold, then any non-zero ideal in  $C^*_r(\Lambda,d)$  has non-zero intersection with $C_0(\Lambda^e)$.
If the conditions described in (i) and (ii) hold, then  $C^*_r(\Lambda,d)$ is simple.
\end{prop}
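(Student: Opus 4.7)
The plan is to translate each assertion into graph-theoretic terms using the tools already developed earlier in the paper.

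Non-degeneracy is immediate: Theorem \ref{structure theorem}(i) gives that the universal Cuntz-Pimsner representation $j_X : X \to \OO_X$ is injective, and the analogous $j_X^r : X \to \OO_X^r$ recorded in Remark \ref{remarkable remarks}(i) is also injective, since each fibre $B_g$ of a Fell bundle embeds isometrically into both the full and reduced cross-sectional algebras.

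For part (i), I would start by observing that the fibre $X_p$ is, by construction, the topological-graph correspondence $X_{E_p}$ of $E_p = (\Lambda^e, \Lambda^p, s|_{\Lambda^p}, r|_{\Lambda^p})$. Applying formula \eqref{dual to graph correspondence} of Proposition \ref{topological graphs aperiodicity} to $E_p$ gives
\[
\X_p(v) \;=\; r\bigl(\Lambda^p \cap s^{-1}(v)\bigr), \qquad v \in \Lambda^e,
\]
and consequently $\X_p^{-1}(v) = \{s(\mu) : \mu \in \Lambda^p,\, r(\mu) = v\}$. Composing (permissible by Corollary \ref{cor on composition}), one finds that $v \in \X_{q^{-1}s_i}\bigl(\X_{p_i^{-1}s_i}^{-1}(v)\bigr)$ precisely when there exist $\mu \in \Lambda^{p_i^{-1}s_i}$ and $\nu \in \Lambda^{q^{-1}s_i}$ with $s(\mu) = s(\nu)$ and $r(\mu) = r(\nu) = v$. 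Under the identification $\SA \cong \Lambda^e$, the aperiodicity condition of Definition \ref{topological freeness definition} — existence of some $[\pi] = v \in U$ lying outside each of the sets $\X_{q^{-1}s_i}\bigl(\X_{p_i^{-1}s_i}^{-1}(v)\bigr)$ — therefore becomes exactly the requirement that the union displayed in \eqref{topological freeness condition2} does not contain $U$.

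Part (ii) is a direct application of Corollary \ref{corollary on minimality} in the fibre-wise setting: an ideal $J \subseteq A = C_0(\Lambda^e)$ corresponds to a closed $V = \Lambda^e \setminus \widehat{J}$, and the equality $X_p^{-1}(J) = J$ is equivalent to $\X_p(V) = V$, i.e.\ $r(\Lambda^p \cap s^{-1}(V)) = V$. Demanding this for every $p \in P$ gives the characterisation of minimality in Definition \ref{minimalsystem} claimed in (ii). The remaining clauses on ideal intersection with $C_0(\Lambda^e)$ and simplicity of $C^*_r(\Lambda,d)$ follow immediately by invoking Theorems \ref{Cuntz-Krieger uniqueness theorem} and \ref{simplicity}. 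The only verification requiring some care is matching the quantifier structure (choice of enumeration of $F$ and of the increasing sequence $q \le s_1 \le \cdots \le s_n$ with $p_i \le s_i$) between Definition \ref{topological freeness definition} and the statement of (i); but since both permit exactly the same freedom, this is routine and should present no genuine obstacle.
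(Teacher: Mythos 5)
Your proposal is correct and follows essentially the same route as the paper: non-degeneracy from Theorem \ref{structure theorem}, part (i) by applying formula \eqref{dual to graph correspondence} fibre-wise to $X_p=X_{E_p}$ and unwinding the composition $\X_{q^{-1}s_i}\circ\X_{p_i^{-1}s_i}^{-1}$, part (ii) via Corollary \ref{corollary on minimality}, and the final claims from Theorems \ref{Cuntz-Krieger uniqueness theorem} and \ref{simplicity}. Your write-up merely makes explicit the translation steps that the paper leaves to the reader.
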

\begin{proof}
The initial claim of this proposition follows from Theorem \ref{structure theorem} above. 
To see that the equivalence in part (i) holds, it suffices to apply  formula \eqref{dual to graph correspondence}  
to the $C^*$-correspondences $X_{p}=X_{E_p}$, $p\in P$. Similarly, using \eqref{dual to graph correspondence} and Corollary \ref{corollary on minimality}, we see  that $X$-invariant ideals in $C_0(\Lambda^e)$ are in one-to-one correspondence with closed sets  $V$ satisfying \eqref{invariant sets for P-graphs}. This proves part (ii). 
The final claim of the proposition now follows from Theorems  \ref{Cuntz-Krieger uniqueness theorem} and  \ref{simplicity} above.
\end{proof}
\begin{rem}\label{Yeend condition remark}
Until now, there has been several different aperiodicity conditions introduced that imply uniqueness theorems for topological (or discrete) higher-rank graphs, that is when $P=\N^k$, cf. \cite{Raeburn}, \cite{yeend}, \cite{Yam09}.  To our knowledge there are no such theorems known for more general semigroups $P$.
We also point out that our topological aperiodicity has an advantage  of being {\em local} -- it involves only finite  paths in $\Lambda$, which is of importance, cf. \cite[discussion on page 94]{Raeburn}.  
\end{rem}


\subsection{The Cuntz algebra $\Qq_\N$}\label{Cuntz subsection}

In \cite{Cun1}, Cuntz introduced $\Qq_\N$, the universal $C^*$-algebra generated by a
unitary $u$ and isometries $s_n$, $n\in \N^\times$, subject to the relations
\begin{enumerate}\renewcommand{\theenumi}{Q\arabic{enumi}}
\item $s_ms_n=s_{mn}$,
\item $s_m u=u^m s_m$, and
\item $\sum_{k=0}^{m-1}u^ks_m s_m^*u^{-k}=1,$
\end{enumerate}
for all $m,n\in \N^\times$. Cuntz proved that $\Qq_\N$ is simple and purely infinite. Now we  deduce the simplicity of $\Qq_\N$ from our general result -- Theorem \ref{simplicity} above, see also  Remark \ref{last remark} below.

It was shown in \cite{Yam09} that $\Qq_\N$ may be viewed as the Cuntz-Pimsner algebra 
of a certain product system. We recall  an explicit description of that product system given in \cite{HLS}.

The product system $X$ is over the semigroup $\N^\times$ and its coefficient algebra  is $A=C(S^1)$. 
We denote by $Z$ the standard unitary generator of $A$. 
Each fiber $X_m$, $m\in \N^\times$, is a $C^*$-correspondence over $A$ associated to the classical covering map 
$S^1\ni z \to z^m \in S^1$, as constructed in 
Example \ref{Exel-Vershik}. Each $X_{m}$ as left $A$-module is free with rank $1$, and we denote the 
basis element by $1_m$. Hence, each element of $X_m$ may be uniquely written as 
$\xi 1_m$ with $\xi\in A$. We have 
$$ \begin{aligned}
(\xi 1_m)\cdot a & =\xi \alpha_m(a) 1_m, \\  
\langle \xi 1_m,\eta 1_m\rangle_m & = L_m(\xi^*\eta), \\
a \cdot \xi 1_m & = (a\xi) 1_m, 
\end{aligned} $$
for $\xi, a\in A$. Then 
$$
X := \bigsqcup_{m\in\N^\times}X_m
$$
becomes  a product system with multiplication $X_m\times X_r\to X_{mr}$ given by
$$
(\xi 1_m)(\eta 1_r):=(\xi\alpha_m(\eta)) 1_{mr}
$$
for $m,r\in\N^\times$. By \cite[Proposition 3.13]{HLS} (cf. \cite[Corollary 5.2]{SY}) we have
 $$
 \OO_X\cong \Qq_\N.
 $$ 
Now, let $E_{i,j}$, $i,j=0,1,\ldots,m-1$, be a system of matrix units in $M_m(\C)$. There is an 
isomorphism 
$$ C(S^1) \otimes M_m(\C) \cong \K(X_m) $$ 
such that
$$ f \otimes E_{i,j} \leftrightarrow  \Theta_{Z^i\alpha_m(f)1_m,Z^j 1_m}. $$
Thus $\widehat{\K(X_m)}$ may be identified with the circle $S^1$. With these identifications, we have  
$$ \phi_m(Z) = Z\otimes E_{0,m-1} + \sum_{j=0}^{m-2} 1\otimes E_{j+1,j}, $$
and hence the multivalued map $\widehat{\phi_m}:S^1\to S^1$ is such that
$$ \widehat{\phi_m}(z) = \{w\in S^1 \mid w^m=z\}. $$
Furthermore, $[X_m\dashind]$ is identified with the identity map on $S^1$, and consequently 
the multivalued map $\widehat{X_m}=\widehat{\phi_m}\circ[X_m\dashind]:S^1\to S^1$ is 
$$ \widehat{X_m}(z)= \{w\in S^1 \mid w^m=z\}. $$
For $m\neq n$ the set $\{z\in Z^1 \mid z\in \widehat{X_m}(\widehat{X_n}^{-1}(z))\}$ is 
finite, while every nonempty open subset of $S^1$ is infinite. It follows that the product system 
$X$ is topologically aperiodic. 

Now, we see that $A$ does not contain any non-trivial invariant ideals. 
Indeed, suppose $J$ is an $X$-invariant ideal in $A$. Then 
$L_m(J)\subseteq J$ for all $m\in\N^\times$.  There exists an open subset $U$ of $S^1$ 
and a function $f\in J$ such that 
$f\geq 0$ and $f(t)\neq 0$ for all $t\in U$. If $m$ is sufficiently large then for each $z\in S^1$ 
there is a $w\in U$ such that $w^m=z$. Then $L_m(f)$ is strictly positive on $S^1$ and hence invertible. 
Since $L_m(f)\in J$, we conclude that $J=A$. 

\begin{rem}\label{last remark}
We recall, cf.  \cite[Section 2]{brv} and Example \ref{Exel-Vershik}, that for each $m\in \N^\times$ the mapping 
$$
 C(S^1)\ni a \mapsto\sqrt{m} a 1_m\in X_m
$$ establishes isomorphism between  the fiber $X_m$  and the  $C^*$-correspondence associated to the  topological graph $(S^1,S^1,\alpha_m,id)$. Using these  isomorphisms, one may recover the product system associated to the topological $\N^\times$-graph $(\Lambda,d)$ constructed in terms of generators in \cite[Proposition 5.1]{Yam09}. In particular, simplicity of $\Qq_\N$ could be also deduced from  Proposition \ref{topological graphs aperiodicity343} applied to $(\Lambda,d)$. Moreover, as the range map in each fiber of $(\Lambda,d)$ is injective, part (ii) of  Proposition \ref{topological graphs aperiodicity}  and Example  
\ref{Exel-Vershik} indicate that our simplicity criterion in this case might be not only sufficient but also necessary.
\end{rem}


\end{document}